\let\color@begingroup\relax
	\let\color@endgroup\relax}{}%
\def\fix@ieeecolor@hbox#1{%
	\hbox{\color@begingroup#1\color@endgroup}}
\patchcmd\@makecaption{\hbox}{\fix@ieeecolor@hbox}{}{\FAILED}
\patchcmd\@makecaption{\hbox}{\fix@ieeecolor@hbox}{}{\FAILED}
\begingroup\color{lightgray},
\begingroup\color{lightgray},
\pgfplotsset{compat=1.18}
\tikzset{
	goalset/.style={semithick,draw=TUMblue,fill=lightblue},
	unsafeset/.style={semithick,draw=red,fill=CORAred},
	set/.style={semithick,draw=black,fill=TUMgray},
	lightgrayset/.style={semithick,draw=black,fill=lightgray}
}
\definecolor{TUMblue}{rgb}{0.00, 0.40, 0.74}
\definecolor{CORAred}{rgb}{0.94510,0.55290,0.56860}%
\definecolor{TUMgray}{rgb}{0.85, 0.85, 0.86}
\definecolor{lightgray}{rgb}{0.92, 0.92, 0.92}
\definecolor{lightblue}{rgb}{0.7529,0.8118,0.9333}
\definecolor{pantone301}{RGB}{0,82,147}
\definecolor{goodRed}{RGB}{227, 27, 35}
\definecolor{goodYellow}{RGB}{255, 195, 37}
\DeclareMathOperator{\diag}{diag}
\DeclarePairedDelimiter{\norm}{\lVert}{\rVert}
\newcommand{\N}{\mathbb{N}}
\newcommand{\Nint}[2]{\mathbb{N}_{[#1,#2]}}
\newcommand{\R}[1]{\mathbb{R}^{#1}}
\newcommand{\intmat}[1]{\boldsymbol{\mathcal{#1}}}
\newcommand{\bigO}[1]{\mathcal{O}(#1)}
\newcommand{\inputsignals}{\mathbb{U}}
\newcommand{\inputsignalssub}{\widecheckinternal{\mathbb{U}}}
\newcommand{\numInp}{q}
\newcommand{\distsignals}{\mathbb{W}}
\newcommand{\operator}[1]{\normalfont{\mathtt{#1}}}
\newcommand{\boxOp}[1]{\operator{box}(#1)}
\newcommand{\centerOp}[1]{\operator{cen}(#1)}
\newcommand{\convOp}[1]{\operator{conv}(#1)}
\newcommand{\conZonoOp}[1]{\operator{CZ}(#1)}
\DeclareRobustCommand\widecheckinternal[1]{{\mathpalette\@widecheckinternal{#1}}}
\def\@widecheckinternal#1#2{%
	\setbox\z@\hbox{\m@th$#1#2$}%
	\setbox\tw@\hbox{\m@th$#1%
		\widehat{\vrule\@width\z@\@height\ht\z@
				 \vrule\@height\z@\@width\wd\z@}$}%
	\dp\tw@-2\ht\z@
%	\dp\tw@-\ht\z@
	\@tempdima\ht\z@ \advance\@tempdima2\ht\tw@ \divide\@tempdima\thr@@
%	\setbox\tw@\hbox{\raise\@tempdima\hbox{\scalebox{1}[-1]{\lower\@tempdima\box\tw@}}}%
	\setbox\tw@\hbox{\raise1.05\@tempdima\hbox{\scalebox{1}[-1]{\lower\@tempdima\box\tw@}}}%
	{\ooalign{\box\tw@ \cr \box\z@}}}
\newcommand{\widecheck}[1]{\,\widecheckinternal{\kern -2pt #1}}
\DeclareMathOperator{\ooplus}{\widehat{\oplus}}
\renewcommand{\S}[1]{\mathcal{S}_{#1}}
\newcommand{\B}[1]{\mathcal{B}_{#1}}
\newcommand{\I}[1]{\mathcal{I}_{#1}}
\newcommand{\Z}[1]{\mathcal{Z}_{#1}}
\newcommand{\outerZ}[1]{\widehat{\mathcal{Z}}_{#1}}
\newcommand{\innerZ}[1]{\widecheck{\mathcal{Z}}_{#1}}
\DeclarePairedDelimiterX{\zono}[1]{\langle}{\rangle_{Z}}{#1}
\newcommand{\CZ}[1]{\mathcal{C}\thinspace\negthickspace\mathcal{Z}_{#1}}
\DeclarePairedDelimiterX{\conZono}[1]{\langle}{\rangle_{{C}\thinspace\negthickspace{Z}}}{#1}
\newcommand{\cZmat}[1]{K_{#1}}
\newcommand{\cZvec}[1]{l_{#1}}
\newcommand{\cons}[1]{h_{#1}}
\newcommand{\poly}[1]{\mathcal{P}_{#1}}
\DeclarePairedDelimiterX{\polyHRep}[1]{\langle}{\rangle_H}{#1}
\newcommand{\polymat}[1]{H_{#1}}
\newcommand{\polyvec}[1]{d_{#1}}
\newcommand{\polyvectilde}[1]{\tilde{d}_{#1}}
\newcommand{\sF}[1]{\rho\!\left(#1\right)} % use sigma if rho is zonotope order
\newcommand{\bigOsF}[1]{\mathtt{SF}(#1)}
\newcommand{\dir}[1]{\ell_{#1}}
\newcommand{\vecones}{\textbf{1}}
\newcommand{\matzeros}{\textbf{0}}
\newcommand{\tFinal}{t_{\text{end}}}
\newcommand{\steps}{\sigma}
\newcommand{\tayl}{\eta}
\newcommand{\grid}{n_{\text{grid}}}
\newcommand{\numTrucks}{\theta}
\newcommand{\trajx}[1]{\xi(#1)}
\newcommand{\F}{\intmat{F}}
\newcommand{\Fu}{\intmat{G}}
\newcommand{\E}{\intmat{E}}
\newcommand{\initset}{\mathcal{X}_{0}}
\newcommand{\targetset}{\mathcal{X}_{\text{end}}}
\newcommand{\targetsetupper}[1]{\mathcal{X}_{\text{end}}^{(#1)}}
\newcommand{\inputset}{\mathcal{U}}
\newcommand{\inputsetupper}[1]{\mathcal{U}^{(#1)}}
\newcommand{\inputsetzero}{\mathcal{U}_0}
\newcommand{\distset}{\mathcal{W}}
\newcommand{\distsetzero}{\mathcal{W}_0}
\newcommand{\constrset}{\bar{\mathcal{X}}}
\newcommand{\Hti}[1]{\mathcal{H}(#1)}
\newcommand{\ZU}[1]{\mathcal{Z}_\mathcal{U}(#1)}
\newcommand{\innerZU}[1]{\widecheck{\mathcal{Z}}_\mathcal{U}(#1)}
\newcommand{\innerZUzero}[1]{\widecheck{\mathcal{Z}}_{\mathcal{U}_0}(#1)}
\newcommand{\outerZU}[1]{\widehat{\mathcal{Z}}_\mathcal{U}(#1)}
\newcommand{\ZW}[1]{\mathcal{Z}_\mathcal{W}(#1)}
\newcommand{\innerZW}[1]{\widecheck{\mathcal{Z}}_\mathcal{W}(#1)}
\newcommand{\outerZW}[1]{\widehat{\mathcal{Z}}_\mathcal{W}(#1)}
\newcommand{\outerZWzero}[1]{\widehat{\mathcal{Z}}_{\mathcal{W}_0}(#1)}
\newcommand{\outerZtraj}[2]{\widehat{\mathcal{Z}}_{#1}(#2)}
\newcommand{\innerZtraj}[2]{\widecheck{\mathcal{Z}}_{#1}(#2)}
\newcommand{\Rti}[1]{\mathcal{R}\!\left(#1\right)}
\newcommand{\BRSE}[1]{\mathcal{R}_{\exists}(#1)}
\newcommand{\BRSEA}[1]{\mathcal{R}_{\exists\forall}(#1)}
\newcommand{\outerBRSE}[1]{\widehat{\mathcal{R}}_{\exists}(#1)}
\newcommand{\innerBRSE}[1]{\widecheck{\mathcal{R}}_{\exists}(#1)}
\newcommand{\innerBRSEA}[1]{\widecheck{\mathcal{R}}_{\exists\forall}(#1)}
\newcommand{\innerBRSEAsuper}[2]{\widecheck{\mathcal{R}}^{(#2)}_{\exists\forall}(#1)}
\newcommand{\outerBRSEA}[1]{\widehat{\mathcal{R}}_{\exists\forall}(#1)}
\newcommand{\BRSEAconstr}[1]{\mathcal{R}_{\exists\forall,\constrset{}}(#1)}
\newcommand{\BRSA}[1]{\mathcal{R}_{\forall}(#1)}
\newcommand{\BRSAE}[1]{\mathcal{R}_{\forall\exists}(#1)}
\newcommand{\innerBRSAE}[1]{\widecheck{\mathcal{R}}_{\forall\exists}(#1)}
\newcommand{\outerBRSAE}[1]{\widehat{\mathcal{R}}_{\forall\exists}(#1)}
\newcommand{\outerBRSAEsuper}[2]{\widehat{\mathcal{R}}^{(#2)}_{\forall\exists}(#1)}
\newtheorem{theorem}{Theorem}
\newtheorem{proposition}{Proposition}
\newtheorem{lemma}{Lemma}
\newtheorem{definition}{Definition}
\newtheorem{assumption}{Assumption}
\crefname{table}{Table}{Tables}
\crefname{figure}{Figure}{Figures}
\crefname{algorithm}{Algorithm}{Algorithms}
\crefname{equation}{}{}
\crefname{definition}{Definition}{Definitions}
\crefname{theorem}{Theorem}{Theorems}
\crefname{corollary}{Corollary}{Corollaries}
\crefname{proposition}{Proposition}{Propositions}
\crefname{line}{line}{lines}
\begin{document}

\title{Backward Reachability Analysis of \\ Perturbed Continuous-Time Linear Systems \\ Using Set Propagation}

% memberships after author name
%\IEEEmembership{Fellow, IEEE}
%\IEEEmembership{Member, IEEE}

\author{Mark Wetzlinger and Matthias Althoff
\thanks{
% This paragraph of the first footnote will contain the date on which you submitted your paper for review.
% © 2025 IEEE. This is the author's version of the work. It is posted here under a Creative Commons Attribution 4.0 International License (CC BY 4.0). The final published version is available at: https://doi.org/10.1109/TAC.2025.3592719
© 2025 IEEE. Author’s version posted under CC BY 4.0. Final published version: https://doi.org/10.1109/TAC.2025.3592719
This work was supported by the European Research Council (ERC) project justITSELF under grant agreement No 817629 and by the German Research Foundation (DFG) under grant numbers GRK 2428 and AL 1185/19-1.}
\thanks{Mark Wetzlinger and Matthias Althoff are with the Department of Computer Engineering, Technical University of Munich, 85748 Garching, Germany (e-mail: \{m.wetzlinger, althoff\}@tum.de).}
}

\maketitle

% --------------- ABSTRACT AND KEYWORDS ---------------

\begin{abstract}
	Backward reachability analysis computes the set of states that reach a target set under the competing influence of control inputs and disturbances.
	Depending on their interplay, the backward reachable set either represents all states that can be steered into the target set or all states that cannot avoid entering it---the corresponding solutions can be used for controller synthesis and safety verification, respectively.
	A popular technique for backward reachable set computation solves Hamilton-Jacobi-Isaacs equations, which scales exponentially with the state dimension due to gridding the state space.
	Instead, we use set propagation techniques to design backward reachability algorithms for linear time-invariant systems.
	Crucially, the proposed algorithms scale only polynomially with the state dimension.
	Our numerical examples demonstrate the tightness of the obtained backward reachable sets and show an overwhelming improvement of our proposed algorithms over state-of-the-art methods regarding scalability, as systems with well over a hundred state variables can now be analyzed.
	% 153 words...
	%The abstract must be a concise yet comprehensive reflection of what is in your article. In particular, the abstract must be self-contained, without abbreviations, footnotes, or references. It should be a microcosm of the full article. The abstract must be between 150--250 words.
\end{abstract}

\begin{IEEEkeywords}
Formal verification, backward reachability analysis, linear systems, set-based computing.
% see: http://www.ieee.org/organizations/pubs/ani\_prod/keywrd98.txt
\end{IEEEkeywords}

% --------------- MAIN BODY ---------------

\section{Introduction}
\label{sec:introduction}

Autonomous systems in safety-critical scenarios require formal verification to rigorously prove safe operation at all times in the presence of uncertainties.
One popular method is backward reachability analysis, which computes the set of states that reach a given target set under a certain interplay between control inputs and disturbances.
This so-called \emph{two-player game} can be set up in two different ways, depending on the meaning of the target set.
%This so-called \emph{two-player game} fundamentally separates backward from forward time perspectives.
%We distinguish between two cases, which align with two notions of reachability analysis.
% \rmk{application: artificial pancreas/robotic \cite{Xue2016}} %insulin \cite{Xue2017}

If the target set represents an unsafe set, one utilizes the notion of \emph{minimal} reachability \cite[Sec.~4.2]{Mitchell2007HSCC}:
The minimal backward reachable set contains all states that cannot avoid entering the target set regardless of the chosen control input.
Consequently, all states within the backward reachable set are deemed unsafe and thus should be avoided.
In case an exact solution cannot be obtained, we resort to computing outer approximations to maintain safety.
A common example is obstacle avoidance:
The target set represents the obstacle and the minimal backward reachable set contains all states from which one cannot avoid hitting the obstacle.

If the target set represents a goal set, the concept of \emph{maximal} reachability \cite[Sec.~4.1]{Mitchell2007HSCC} is applicable:
The maximal backward reachable set contains all states from which we can steer into the target set despite worst-case disturbances.
Note that any initial state only requires to reach the target set by a single control input trajectory to become part of the backward reachable set.
To ensure that all contained initial states can definitely be steered into the target set, we require an inner approximation if the exact solution cannot be computed.
%For instance, the maximal backward reachable set of a quadrotor for a target set contains all states from which the quadrotor can reach the target.
Maximal backward reachability is closely related to controller synthesis:
The backward reachable set contains all states for which a controller exists such that the target set is reachable.
%This set is by definition a superset of the initial states that can be steered into the target set given a synthesized controller.
%The similarity between the two sets can be used to assess the conservativeness of the controller, since any restriction imposed on the controller design removes initial states that can still always reach the target set.

In this article, we compute minimal and maximal backward reachable sets for continuous-time linear time-invariant (LTI) systems.
As there are many similar definitions of backward reachable sets as well as related concepts, we postpone the literature review to \cref{sec:relatedwork}.
This allows us to use the preliminary information from \cref{sec:preliminaries,sec:problemstatement} for a more concise overview.
Our contributions are as follows:
\begin{itemize}
	\item An inner and outer approximation for the time-point minimal backward reachable set (\cref{ssec:BRSAE_tp}).
	\item An outer approximation of the time-interval minimal backward reachable set (\cref{ssec:outerBRSAE_ti}).
	\item An inner and outer approximation for the time-point maximal backward reachable set (\cref{ssec:BRSEA_tp}).
	\item An inner approximation for the time-interval maximal backward reachable set (\cref{ssec:BRSEA_ti}).
%	\item An extension of the proposed maximal backward reachable set computation to respect state constraints over the entire time horizon (\cref{ssec:BRSEA_constr}).
\end{itemize}
Crucially, all proposed algorithms scale only polynomially with respect to the state dimension.
Additionally, we discuss the approximation errors of each computed set.
Our evaluation in \cref{sec:numericalexamples} is followed by closing remarks in \cref{sec:conclusion}.

\section{Preliminaries}
\label{sec:preliminaries}

We introduce some general notation, basics of set-based arithmetic, and fundamentals on forward reachability analysis required for the main body of this article.

\subsection{Notation}
\label{ssec:notation}

The set of real numbers is denoted by $\R{}$, the set of natural numbers without zero is denoted by $\N{}$, and the subset $\{a,a+1,...,b\} \subset \N{}$ for $0 < a < b$, is denoted by $\Nint{a}{b}$.
We denote scalars and vectors by lowercase letters and matrices by uppercase letters.
For a vector $s \in \R{n}$, $\norm{s}_p$ returns its $p$-norm and $s_{(i)}$ represents its $i$th entry;
for a matrix $M \in \R{m \times n}$, $M_{(i,\cdot)}$ refers to the $i$th row and $M_{(\cdot,j)}$ to the $j$th column.
The operation $\diag(s)$ returns a square matrix with the vector~$s$ on its main diagonal.
Horizontal concatenation of two properly-sized matrices $M_1$ and $M_2$ is denoted by $[M_1\;M_2]$ and the identity matrix of dimension~$n$ by $I_n$.
Furthermore, we use $\matzeros{}$ and $\vecones{}$ to represent vectors and matrices of proper dimension containing only zeros or ones.
We denote exact sets by standard calligraphic letters $\S{}$, inner approximations by~$\widecheck{\mathcal{S}} \subseteq \S{}$, and outer approximations by $\widehat{\mathcal{S}} \supseteq \S{}$.
We write the set $\{-s | s \in \S{} \}$ as $-\S{}$ and represent the empty set by $\emptyset$.
An interval is defined by $\I{} = [a,b] = \{ x \in \R{n} \, | \, a \leq x \leq b \}$, where the inequality is evaluated element-wise.
Interval matrices extend intervals by using matrices as lower and upper limits and are denoted in bold calligraphic letters, e.g. $\intmat{I}$.
The operations $\centerOp{\S{}}$ and $\boxOp{\S{}}$ compute the volumetric center and tightest axis-aligned interval outer approximation of the set $\S{}$, respectively.
The Cartesian product of two sets $\S{1}, \S{2}$ is denoted by $\S{1} \times \S{2}$.
Additionally, we introduce the hyperball $\B{\varepsilon} = \{ x \in \R{n} \, | \, \norm{x}_2 \leq \varepsilon \}$.
% Finally, we use $f(x) \in \bigO{g(x)}$ to denote the big O notation.

\subsection{Set-Based Arithmetic}
\label{ssec:setopsandreps}

For convex sets $\S{1}, \S{2} \subset \R{n}$ as well as a matrix $M \in \R{m \times n}$, we formally define the linear map with a matrix and an interval matrix, Minkowski sum, Minkowski difference, intersection, and convex hull:
\begin{align}
	&M \S{1} \coloneqq \{ M s_1 \, | \,  s_1 \in \S{1} \} , \label{eq:def_linMap} \\
	&\intmat{M}\S{1} \coloneqq \{ M s_1 \, | \, M \in \intmat{M}, s_1 \in \S{1} \} \label{eq:def_linMap_intMat} \\
	&\S{1} \oplus \S{2} \coloneqq \{ s_1 + s_2 \, | \, s_1 \in \S{1}, s_2 \in \S{2} \} , \label{eq:def_minkSum} \\
	&\S{1} \ominus \S{2} \coloneqq \{ s \, | \, \{s\} \oplus \S{2} \subseteq \S{1} \} , \label{eq:def_minkDiff} \\
	&\S{1} \cap \S{2} \coloneqq \{ s \, | \,  s \in \S{1} \land s \in \S{2} \} , \label{eq:def_and} \\
	\begin{split} \label{eq:def_conv}
		&\convOp{\S{1},\S{2}} \coloneqq \{ \lambda s_1 + (1-\lambda)s_2 ~ | \\
		&\hspace{90pt} s_1 \in \S{1}, s_2 \in \S{2}, \lambda \in [0,1] \} .
	\end{split}
\end{align}
%Convex sets can be implicitly described by their support function:
The support function implicitly describes convex sets:
\begin{definition}[Support function {\cite[Sec.~2]{Girard2008IFAC}}] \label{def:sF}
For a convex, compact set $\S{} \subset \R{n}$ and a vector $\dir{} \in \R{n}$, the support function $\rho: \R{n} \to \R{}$ is
\begin{equation*} %\label{eq:supportFunc}
	\hspace{83pt} \sF{\S{},\dir{}} \coloneqq \max_{s \in \S{}} \dir{}^\top s . \hspace{76pt} \square
\end{equation*}
%\begin{equation*} %\label{eq:supportFunc}
%	\sF{\S{},\dir{}} \coloneqq \max_{s \in \S{}} \dir{}^\top s .
%\end{equation*}
%
%The \emph{support vector} $\sV{\S{},\dir{}} \in \R{n}$ is the point contained in $\S{}$, which is farthest in the direction~$\dir{}$:
%%
%\begin{equation*} %\label{eq:supportVector}
%	\sV{\S{},\dir{}} \coloneqq \argmax_{s \in \S{}} \, \dir{}^\top s .
%\end{equation*}
%%
%Note that the support vector is not necessarily unique.
%\hfill $\square$
\end{definition}
\noindent For support functions, we require the identities\footnote{Equation (9) in the published version was incorrect and has been removed.} \cite[Eq.~(3)]{Frehse2015CDC}
\begin{align}
	&\sF{M \S{}, \dir{}} = \sF{\S{}, M^\top \dir{}} , \label{eq:linMap_sF} \\
	&\sF{\S{1} \oplus \S{2}, \dir{}} = \sF{\S{1},\dir{}} + \sF{\S{2},\dir{}} . \label{eq:minkSum_sF} % \\
	% &\text{\sout{$\sF{\S{1} \ominus \S{2}, \dir{}} = \sF{\S{1},\dir{}} - \sF{\S{2},\dir{}}$}.} \nonumber %\textsuperscript{\footnotemark} \nonumber
\end{align}
%
%which are very similar for support vectors \cite[Eq.~(4)]{Frehse2015CDC}:
%\begin{align}
%	&\sV{M \S{}, \dir{}} = M \sV{\S{}, M^\top \dir{}} , \label{eq:linMap_sV} \\
%	&\sV{\S{1} \oplus \S{2}, \dir{}} = \sV{\S{1},\dir{}} + \sV{\S{2},\dir{}} . \label{eq:minkSum_sV}
%\end{align}
% 
Next, we introduce the three set representations required for our backward reachability algorithms.
%as well as the runtime complexity $\bigO{g(n)}$ of each introduced operation $f(\S{})$, $\S{} \subset \R{n}$.
The runtime complexity of each operation is summarized in \cref{tab:setops}, where we assume a runtime complexity of $\bigO{\max\{p,q\}^{3.5}}$ for the evaluation of a linear program with $p$ variables and $q$ constraints \cite{Boyd2004book}.
We start with polytopes. %, which represent the target set of the backward reachability problem:
\begin{definition}[Polytope {\cite[Sec.~1.1]{Ziegler2012book}}] \label{def:polytope}
A polytope $\poly{} \subset \R{n}$ in halfspace representation is described using $\cons{} \in \N{}$ linear inequalities defined by the matrix $\polymat{} \in \R{\cons{} \times n}$ and the vector $\polyvec{} \in \R{\cons{}}$:
\begin{equation*} %\label{eq:polyH}
	\poly{} \coloneqq \big\{ s \in \R{n} ~ \big| ~ \polymat{} s \leq \polyvec{} \big\} .
\end{equation*}
We use the shorthand $\poly{} = \polyHRep{\polymat{},\polyvec{}}$.
%The vertex representation is given by the convex hull of the vertices $\{v_1, \dotsc, v_j\} \in \R{n}$:
%\begin{equation*} %\label{eq:polyV}
%	\poly{} \coloneqq \bigg\{ \sum_{i=1}^{j} \beta_i \, v_i ~ \bigg| ~
%	\sum_{i=1}^{j} \beta_i = 1,~ \beta_i \geq 0 \bigg\} .
%\end{equation*}	
% and $\poly{} = \polyVRep{[v_1 \dotsc v_j]}$.
\hfill $\square$
\end{definition}
\noindent A compact set $\S{} \subset \R{n}$ can be enclosed by a polytope through by a finite number $\cons{} \in \N{}$ of support function evaluations
\begin{align}
\begin{split} \label{eq:sFset}
	&\S{} \subseteq \polyHRep{\polymat{},\polyvec{}} , \\
	&\text{where} ~ \forall j \in \Nint{1}{\cons{}}\colon \polyvec{(j)} = \sF{\S{},\polymat{(j,\cdot)}^\top} .
\end{split}
\end{align}
% h\bigO{\sF{\S{},\dir{}}}
%
% and set equality holds if and only if the normal vectors of all faces of $\S{}$ are the row vectors of $\polymat{} \in \R{\cons{} \times n}$.
Polytopes are closed under all aforementioned set operations \eqref{eq:def_linMap}-\eqref{eq:def_conv} \cite[Tab.~1]{Althoff2021b}.
We will, however, only make use of the linear map with an invertible matrix $M \in \R{n \times n}$ and the Minkowski difference \cite[Thm.~2.2]{Kolmanovsky1998}:
\begin{align}
	&M \poly{} = \polyHRep{\polymat{} M^{-1},\polyvec{}},
%	\hspace{45pt} \bigO{hn^2},
	\label{eq:linMap_poly} \\
	&\poly{} \ominus \S{} = \polyHRep{\polymat{}, \tilde{d}},
%	\hspace{60pt} h \bigO{\sF{\S{},\dir{}}},
	\label{eq:minkDiff_poly} \\
	&\text{where}~ \forall j \in \Nint{1}{\cons{}}\colon \tilde{d}_{(j)} = \polyvec{(j)} - \sF{\S{},\polymat{(j,\cdot)}^\top}. \nonumber
\end{align}
% \bigO{hn^2}, h \bigO{\sF{\S{},\dir{}}}
%
The enclosing interval $\boxOp{\poly{}}$ is computed via $2n$ support function evaluations (linear programs), one for each column vector in $[I_n \; -\!I_n]$.
% \bigO{n^4}
Next, we introduce zonotopes.
\begin{definition}[Zonotope {\cite[Def.~1]{Girard2005HSCC}}] \label{def:zonotope}
Given a center $c \in \R{n}$ and $\gamma \in \N{}$ generators stored as columns in the matrix $G \in \R{n \times \gamma}$, a zonotope $\Z{} \subset \R{n}$ is
\begin{equation*} %\label{eq:zonotope}
	\Z{} \coloneqq \Big\{ c + \sum_{i = 1}^{\gamma} G_{(\cdot,i)} \, \alpha_i ~ \Big| ~ \alpha_i \in [-1,1] \Big\}.
\end{equation*}
We use the shorthand $\Z{} = \zono{c,G}$.
\hfill $\square$
\end{definition}
\noindent For zonotopes, we require the linear map with a matrix $M \in \R{m \times n}$ and Minkowski sum computed as \cite[Eq.~(2.1)]{Althoff2010diss}
\begin{align}
	&M \Z{} = \zono{M c, M G},
%	\hspace{62pt} \bigO{n^2 \gamma},
	\label{eq:linMap_zono} \\
	&\Z{1} \oplus \Z{2} = \zono{c_1 + c_2, [G_1 ~ G_2]},
%	\hspace{15pt} \bigO{n},
	\label{eq:MinkSum_zono}
\end{align}
% \bigO{n^2 \gamma}, \bigO{n}
%
and the support function in a direction $\dir{} \in \R{n}$ \cite[Prop.~1]{LeGuernic2010NAHS}:
\begin{align} 
	\sF{\Z{},\dir{}} &= \dir{}^\top c + \sum_{i=1}^\gamma |\dir{}^\top G_{(\cdot,i)}| .
		\label{eq:sF_zono} %\\
%	\sV{\Z{},\dir{}} &= c + \sum_{i=1}^\gamma \sgn\big( \dir{}^\top G_{(\cdot,i)} \big) G_{(\cdot,i)} . \label{eq:sV_zono}
\end{align}
% \bigO{n\gamma}, \bigO{?}
%
The multiplication of an interval matrix $\intmat{M} = [L,U]$ with a zonotope $\Z{}$ can be enclosed by \cite[Thm.~4]{Althoff2007CDC}
\begin{align}
%\begin{split} 
	&\intmat{M} \Z{} \subseteq
	\big \langle M_c c,	\big[M_c G \; \diag \big( M_r \nu \big) \big] \big \rangle_Z , \label{eq:intmatzono} \\
	&M_c = \tfrac{1}{2}(L+U), M_r = \tfrac{1}{2}(U-L), \nu = |c| + \sum_{i=1}^\gamma | G_{(\cdot,i)}|. \nonumber
%	\hspace{15pt} \bigO{n^2 \gamma}.
%\end{split}
\end{align}
% \bigO{n^2 \gamma}
%
%Furthermore, a parallelotope (a zonotope with $n$ linearly independent generators) can be converted to a polytope by
%%
%\begin{align}
%%\begin{split} \label{eq:zono2poly} 
%	&\polyHRep{\polymat{},\polyvec{}} = \polyOp{\zono{c,G}} , \label{eq:zono2poly} \\
%	&\polymat{} = [(G^{-1})^\top \; (-G^{-1})^\top]^\top, \nonumber \\
%	&\polyvec{} = [(1+G^{-1}c)^\top \; (1-G^{-1}c)^\top]^\top. \nonumber
%	%\qquad \bigO{n^3}.
%%\end{split}
%\end{align}
% \bigO{n^3}
%
%Furthermore, we define the operation $\scaleOp{\Z{},\alpha} \coloneqq \zono{c,\diag(\alpha) \, G}$ with $\alpha \in \R{n}$.
Constrained zonotopes extend zonotopes by introducing equality constraints on the factors.
%and can represent any polytope \cite[Thm.~1]{Scott2016}.
%
% definition below copied entirely from TAC
\begin{definition}[Constrained zonotope {\cite[Def.~3]{Scott2016Automatica}}] \label{def:conZonotope}
Given a vector $c \in \R{n}$, a generator matrix $G \in \R{n \times \gamma}$, a constraint matrix $\cZmat{} \in \R{\cons{} \times \gamma}$, and a constraint offset $\cZvec{} \in \R{\cons{}}$, a constrained zonotope $\CZ{}~\subset~\R{n}$ is
\begin{equation*}
	\CZ{} \coloneqq \Big\{ c + \sum_{i = 1}^{\gamma} G_{(\cdot,i)} \, \alpha_i \, \Big| \,
	\sum_{i=1}^{\gamma} \cZmat{(\cdot,i)} \alpha_i = \cZvec{}, \, \alpha_i \in [-1,1] \Big\}.
\end{equation*}
We use the shorthand $\CZ{} = \conZono{c,G,\cZmat{},\cZvec{}}$.
\hfill $\square$
\end{definition}
\noindent For constrained zonotopes, we require the linear map with a matrix $M \in \R{m \times n}$ and Minkowski sum \cite[Prop.~1]{Scott2016Automatica}:
\begin{align*}
	&M \CZ{} = \conZono{Mc, MG, \cZmat{}, \cZvec{}} , \\
	%	\hspace{57pt} \bigO{n^2 \gamma}, \\
	&\CZ{1} \oplus \CZ{2} = \bigg\langle c_1 + c_2, [G_1 ~ G_2], \begin{bmatrix} \cZmat{1} & \matzeros{} \\ \matzeros{} & \cZmat{2} \end{bmatrix}, \begin{bmatrix} \cZvec{1} & \matzeros{} \\ \matzeros{} & \cZvec{2} \end{bmatrix} \bigg\rangle_{CZ} .
	%	\hspace{8pt} \bigO{n},
\end{align*}
% \bigO{n^2 \gamma}, \bigO{n}
%
The intersection of a constrained zonotope with a polytope $\poly{} = \polyHRep{\polymat{},\polyvec{}}$ can be computed via sequential intersection with each halfspace $\polyHRep{\polymat{(j,\cdot)},\polyvec{(j)}}, j \in \Nint{1}{\cons{}}$ \cite[Thm.~1]{Raghuraman2020Automatica}
\begin{align}
	&\CZ{} \cap \polyHRep{\polymat{(j,\cdot)},\polyvec{(j)}} = \bigg\langle c, [G ~ \matzeros{}],
		\begin{bmatrix} \cZmat{} & \matzeros{} \\ \polymat{(j,\cdot)}G & \tfrac{1}{2} (\polyvec{(j)}-o) \end{bmatrix}, \nonumber \\
	&\hspace{100pt} \begin{bmatrix} \cZvec{} \\ \tfrac{1}{2}(\polyvec{(j)}+o)-\polymat{(j,\cdot)}c \end{bmatrix} \bigg\rangle_{{C}\thinspace\negthickspace{Z}} ,
%	\hspace{7pt} \bigO{\cons{} n^3} ,
	\label{eq:intersection_CZ} \\
	&\text{where} \; o = -\sF{\CZ{},-\polymat{(j,\cdot)}^\top} \nonumber
\end{align}
% \bigO{\cons{} n^3}, \bigO{\gamma^3}
%
evaluates the support function of the constrained zonotope using linear programming. %\rmk{cite!}
%
%% old version (without algorithm)
%The exact conversion of a polytope $\poly{} = \polyHRep{\polymat{},\polyvec{}}$ to a constrained zonotope \cite[Thm.~1]{Scott2016Automatica} is computed by
%%
%\begin{align}
%\begin{split} \label{eq:conZonoOp}
%	&\conZono{c,G,\cZmat{},\cZvec{}} = \conZonoOp{\poly{}} , \\
%	&\text{where} \; \zono{c,G} \supseteq \poly{} \; \text{and} \; \cZmat{}, \cZvec{} \; \text{from} \; \zono{c,G} \cap \poly{} ,
%\end{split}
%\end{align}
%% (2n+\cons{})\bigO{n^3}
%that is, we first enclose the polytope $\poly{}$ by a zonotope $\zono{c,G}$, e.g., $\boxOp{\poly{}}$, and then intersect this zonotope with each halfspace of $\poly{}$ as in \eqref{eq:intersection_CZ} to obtain the constraint matrix $\cZmat{}$ and constraint offset $\cZvec{}$.
%
%% new version (with algorithm)
The exact conversion from a polytope to a constrained zonotopes, denoted by $\conZonoOp{\poly{}}$, is computed using \cref{alg:conZonoOp}, which implements \cite[Thm.~1]{Scott2016Automatica}.
%(in $\bigO{(\cons{}+n)^{1.5}n^3}$ operations).
%
%
The convex hull can be computed according to \cite[Thm.~5]{Raghuraman2020Automatica} and the multiplication with an interval matrix $\intmat{M} \mathcal{CZ}$ follows from \eqref{eq:intmatzono}.
% \bigO{n}, \bigO{n^2\gamma}
All introduced set operations scale polynomially in the set dimension and the number of halfspaces/generators, which will enable our backward reachability algorithms to run in polynomial time.
%\cref{tab:setops} summarizes the runtime complexity of each operation with respect to the set dimension~$n$.

% \begin{algorithm}[!t]
% 	\caption{Conversion: Polytope to constrained zonotope} \label{alg:conZonoOp}
% 	\textbf{Require:} Polytope $\poly{} = \polyHRep{\polymat{},\polyvec{}}$
	
% 	\textbf{Ensure:} Constrained zonotope $\CZ{} = \conZono{c,G,\cZmat{},\cZvec{}}$
	
% 	\begin{algorithmic}[1]
% 		\State $\zono{c,G} \gets \boxOp{\poly{}}$, $\cZmat{} \gets [\;]$, $\cZvec{} \gets \matzeros{}$
% 		\For{$j \gets 1$ to $\cons{}$}
% 		\State $G \gets [G \;\, \matzeros{}]$, $o \gets -\sF{\boxOp{\poly{}},-\polymat{(j,\cdot)}^\top}$
% 		\State $\cZmat{} \gets [\cZmat{} \;\, \matzeros{}]$, $\cZmat{(j,\cdot)} \gets [\polymat{(j,\cdot)} G \;\, \frac{1}{2}(\polyvec{(j)} - o)]$
% 		\State $\cZvec{(j)} \gets \frac{1}{2} (\polyvec{(j)} + o) - \polymat{(j,\cdot)} c$
% 		\EndFor
% 		\State $\CZ{} \gets \conZono{c,G,\cZmat{},\cZvec{}}$
% 	\end{algorithmic}
% \end{algorithm}

\begin{algorithm}[!t]
	\caption{Conversion: Polytope to constrained zonotope} \label{alg:conZonoOp}
	\textbf{Require:} Polytope $\poly{} = \polyHRep{\polymat{},\polyvec{}}$
	
	\textbf{Ensure:} Constrained zonotope $\CZ{} = \conZono{c,G,\cZmat{},\cZvec{}}$
	
	\begin{algorithmic}[1]
		\State $\zono{c,G} \gets \boxOp{\poly{}}$
		\State $\forall j \in \Nint{1}{\cons{}}\colon o_{(j)} \gets -\sF{\zono{c,G},-\polymat{(j,\cdot)}^\top}$
		\State $G \gets [G \;\, \matzeros{}]$, $\cZmat{} \gets [\polymat{} G \;\, \tfrac{1}{2}\diag(o - \polyvec{})]$, $\cZvec{} \gets \tfrac{1}{2}(\polyvec{}+o) - \polymat{}c$
		\State $\CZ{} \gets \conZono{c,G,\cZmat{},\cZvec{}}$
	\end{algorithmic}
\end{algorithm}

\begin{table}
\centering \small \setlength{\tabcolsep}{6pt} % default: 6pt
\caption{Runtime complexity of set operations for $n$-dimensional sets\protect\footnotemark.}
\label{tab:setops}
\begin{tabular}{l l c l l}
	\toprule
	\textbf{Operation} & \textbf{Complexity} & & \textbf{Operation} & \textbf{Complexity} \\ \cmidrule{1-2} \cmidrule{4-5}
	$M \poly{}$ & $\bigO{\cons{}n^2}$
		& & $\Z{1} \oplus \Z{2}$ & $\bigO{n}$ \\
	$M\Z{}$ & $\bigO{n^2\gamma}$
	    & & $\CZ{1} \! \oplus \CZ{2}$ & $\bigO{n}$ \\
	$M \CZ{}$ & $\bigO{n^2\gamma}$
		& & $\poly{} \ominus \S{}$ & $\bigO{\cons{} \bigOsF{\S{}}}$ \\
	$\intmat{M} \Z{}$ & $\bigO{n^2\gamma}$
	    & & $\S{} \subseteq \poly{}$ & $\bigO{\cons{} \bigOsF{\S{}}}$ \\
	$\intmat{M}\CZ{}$ & $\bigO{n^2\gamma}$
	    & & $\CZ{} \cap \poly{}$ & $\bigO{\cons{}\gamma^{3.5}}$ \\
	$\sF{\poly{},\dir{}}$ & $\bigO{\cons{}^{3.5}}$
	    & & $\convOp{\CZ{1},\CZ{2}}$ & $\bigO{n}$ \\
	$\sF{\Z{},\dir{}}$ & $\bigO{n\gamma}$
		& & $\boxOp{\poly{}}$ & $\bigO{n\cons{}^{3.5}}$ \\
	$\sF{\CZ{},\dir{}}$ & $\bigO{\gamma^{3.5}}$
		& & $\conZonoOp{\poly{}}$ & $\bigO{n\cons{}^{3.5}}$ \\
	\bottomrule
\end{tabular}
\vspace{-0.5cm}
\end{table}
% $\sV{\Z{},\dir{}}$ & $\bigO{n\gamma}$

\footnotetext{The polytope $\poly{}$ has $\cons{} \geq n \in \N{}$ constraints, the constrained zonotope $\CZ{}$ and the zonotope $\Z{}$ have $\gamma \geq n \in \N{}$ generators, $\dir{} \in \R{n}$ is a vector, and $\bigOsF{\S{}}$ denotes the runtime complexity to evaluate $\sF{\S{},\dir{}}$.}

%% GJK algorithm: not needed anymore, but good description
%One can check for intersection of two convex, compact, and bounded sets $\S{1}, \S{2} \subset \R{n}$ using the Gilbert-Johnson-Keerthi (GJK) algorithm \cite{Gilbert1988}, which is based on the equivalence
%\begin{equation} \label{eq:GJK}
%	\S{1} \cap \S{2} \neq \emptyset \quad \Leftrightarrow \quad \matzeros{} \in \S{1} \oplus (-\S{2}) .
%\end{equation}
%%
%Only support functions evaluations are required to determine whether the origin $\matzeros{}$ is contained in the set $\S{} = \S{1} \oplus (-\S{2})$.
%The origin is not contained if there exists a single direction $\dir{} \in \R{n}$, for which the support function evaluates to a value smaller than $0$:
%\begin{equation} \label{eq:GJK_sF}
%	\matzeros{} \not\in \S{} \; \Leftrightarrow \; \exists \dir{} \in \R{n}: \sF{\S{1},\dir{}} + \sF{\S{2},-\dir{}} < 0 ,
%\end{equation}
%%
%where we expand the evaluation according to \eqref{eq:linMap_sF}-\eqref{eq:minkSum_sF}.
%To prove that the origin is indeed contained, the GJK algorithm uses support vectors $\sV{\S{},\dir{}}$ of the set $\S{}$ along $n+1$ directions to construct the inner approximative simplex
%\begin{equation} \label{eq:GJK_sV}
%	\widecheck{\mathcal{S}} = \convOp{\sV{\S{},\dir{1}},...,\sV{\S{},\dir{n+1}}} \subseteq \S{} ,
%\end{equation}
%where we slightly abuse the operator $\convOp{\cdot}$ to denote the convex hull of a list of vertices.
%As simplices only have $n+1$ facets, we obtain an efficient containment check for $\matzeros{} \in \widecheck{\mathcal{S}}$. %and thus $\matzeros{} \in \S{}$.

\subsection{Forward Reachable Set Computation}
\label{ssec:FRS}

For an LTI system of the form $\dot{x}(t) = Ax(t) + u(t)$, let the solution trajectory at time~$t \in \R{}$ for an initial state $x_0 \in \initset{} \subset \R{n}$ and an input trajectory $u(\cdot)\colon \R{} \to \inputset{} \subset \R{n}$ be denoted by $\trajx{t;x_0,u(\cdot)}$.
Our backward reachability algorithms leverage established knowledge from forward reachability analysis:
\begin{definition}[Forward reachable set] \label{def:FRS}
The forward reachable set at time $t \geq 0$ is
\begin{equation*}
	\Rti{t} \coloneqq \{ \trajx{t; x_0, u(\cdot)} \, | \, \exists x_0 \in \initset{}, \forall \theta \in [0,t]\colon u(\theta) \in \inputset{} \} . \quad \square
\end{equation*}
%\begin{equation*}
%	\Rti{t} \coloneqq \{ \trajx{t; x_0, u(\cdot)} \, | \, x_0 \in \initset{}, \forall \theta \in [0,t]\hspace{-2pt}: u(t) \in \inputset{} \} .
%\end{equation*}
\end{definition}

Next, we briefly recall the computation of the homogeneous time-interval solution and the particular solution, which can be computed separately due to the well-known superposition principle of linear systems.

\subsubsection{Homogeneous solution}
\label{sssec:FRS_hom}

Given two homogeneous time-point solutions $\Hti{t_k}, \Hti{t_{k+1}} \subset \R{n}$, we enclose all trajectories over the interval $\tau_k = [t_k,t_{k+1}]$ of length $\Delta t = t_{k+1}-t_k \geq 0$ to enclose the homogeneous time-interval solution \cite[Sec.~3.2]{Althoff2010diss}
\begin{align} 
	\Hti{\tau_k} &\coloneqq \{ e^{At} x(t_k) \, | \, t \in \tau_k, x(t_k) \in \Hti{t_k} \} \label{eq:Hti} \\
	&\subseteq \convOp{\Hti{t_k},\Hti{t_{k+1}}} \oplus \F{} \Hti{t_k} , \label{eq:outerHti}
\end{align}
where the interval matrix $\F{}$ is \cite[Prop.~3.1]{Althoff2010diss}
\begin{equation} \label{eq:F}
	\F{} = \bigoplus_{i=2}^{\tayl{}} \big[\big( i^{\frac{-i}{i-1}} - i^{\frac{-1}{i-1}} \big) \Delta t^i,0\big] \, \frac{A^i}{i!} \oplus \E{} ,
\end{equation}
with $A \in \R{n \times n}$ as in \cref{def:FRS} and the interval matrix
\begin{align}
\begin{split} \label{eq:E}
	\E{} &= [-E(\Delta t,\tayl{}),E(\Delta t,\tayl{})] , \\
	E(\Delta t, \tayl{}) &= e^{|A|\Delta t} - \sum_{i=0}^{\tayl{}} \frac{\big( |A|\Delta t \big)^i}{i!} 
\end{split}
\end{align}
%\cite[Eq.~(3.2)]{Althoff2010diss}.
representing the remainder of the exponential matrix \cite[Prop.~2]{Althoff2011HSCC}.
An inner approximation of the homogeneous time-interval solution \eqref{eq:Hti} can be computed by \cite[Prop.~1]{Wetzlinger2023TAC}
\begin{align}
	&\Hti{\tau_k} \supseteq (\convOp{\Hti{t_k},\Hti{t_{k+1}}} \ominus \F{} \Hti{t_k}) \ominus \B{\mu} , \label{eq:innerHti} \\
	&\text{where} \, \mu = \sqrt{\gamma} \, \norm{(e^{A\Delta t} - I_n) G}_2 \label{eq:mu}
\end{align}
uses the generator matrix $G \in \R{n \times \gamma}$ of $\Hti{t_k} = \zono{c,G}$.

\subsubsection{Particular solution}
\label{sssec:FRS_part}

The exact particular solution at time $t = \Delta t$ for time-varying inputs within a set $\S{}$ is defined as
\begin{equation} \label{eq:Z}
	\Z{\S{}}(\Delta t) \coloneqq \bigg\{ \int_{0}^{\Delta t} e^{A(\Delta t-\theta)} s(\theta) \, \mathrm{d}\theta \, \bigg| \, s(\theta) \in \S{} \bigg\} .
\end{equation}
We compute an outer approximation $\outerZ{\S{}}(\Delta t)$ and an inner approximation $\innerZ{\S{}}(\Delta t)$ as \cite[Eq.~(3.7)]{Althoff2010diss}
\begin{align}
	\Z{\S{}}(\Delta t) \subseteq \outerZ{\S{}}(\Delta t) &\coloneqq \bigoplus_{i=0}^\tayl{} \frac{A^i \Delta t^{i+1}}{(i+1)!} \, \S{} \oplus \E{} \Delta t \, \S{}, \label{eq:outerZ} \\
%	\innerZ{\S{}}(\Delta t) &= \bigg( \sum_{i=1}^\infty \frac{A^i \Delta t^{i+1}}{(i+1)!} \bigg) \, \S{} . \label{eq:innerZ}
	\Z{\S{}}(\Delta t) \supseteq \innerZ{\S{}}(\Delta t) &\coloneqq A^{-1} (e^{A \Delta t} - I_n) \, \S{} . \label{eq:innerZ}
\end{align}
A suitable value for $\tayl{} \in \N{}$ in \eqref{eq:E} and \eqref{eq:outerZ} can be automatically determined as shown in \cite{Wetzlinger2023TAC}.
For \eqref{eq:innerZ}, we can integrate the term $A^{-1}$ in the power series of the exponential matrix $e^{A\Delta t}$ if the matrix $A$ is not invertible.
%, whereas the linear map in \eqref{eq:innerZ} can be evaluated similarly to the common matrix exponential $e^{At}$ by extracting $A^{-1}$.
The particular solution can be propagated by \cite[Cor.~3.1]{Althoff2010diss} %\cite[Eq.~(6)]{Frehse2011CAV}
\begin{equation} \label{eq:Zprop}
	\Z{\S{}}(t_{k+1}) = \Z{\S{}}(t_k) \oplus e^{At_k} \Z{\S{}}(\Delta t) ,
\end{equation}
which avoids the wrapping effect \cite{Girard2006HSCC1}.
The proposition below states that the inner and outer approximations of the particular solution can be made arbitrarily accurate, which will serve as a key point in later discussions on approximation errors.
\begin{proposition}[Approximation error of particular solution {\cite{Wetzlinger2023TAC}}] \label{prop:convergence}
The Hausdorff distances between the exact particular solution $\Z{\S{}}(t)$ and the outer approximation $\outerZ{\S{}}(t)$ as well as between the exact particular solution $\Z{\S{}}(t)$ and the inner approximation $\innerZ{\S{}}(t)$ converge linearly to $0$, as the time step size $\Delta t$ used in \eqref{eq:Zprop} approaches $0$.
\end{proposition}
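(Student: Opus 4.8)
The plan is to reduce the multi-step error to a single-step error and then show the latter is of order $\Delta t^2$, so that the accumulation over $t/\Delta t$ steps leaves an error of order $\Delta t$. First I would unroll the propagation \eqref{eq:Zprop} with $t_k = k\Delta t$ and $K = t/\Delta t$ steps, starting from $\Z{\S{}}(0) = \{\matzeros{}\}$, to obtain the Minkowski-sum representation $\Z{\S{}}(t) = \bigoplus_{k=0}^{K-1} e^{A t_k}\, \Z{\S{}}(\Delta t)$, and likewise for $\innerZ{\S{}}(t)$ and $\outerZ{\S{}}(t)$ with $\Z{\S{}}(\Delta t)$ replaced by the respective single-step approximation. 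Using the two standard properties of the Hausdorff distance $d_H$ --- subadditivity under Minkowski sum, $d_H(\mathcal{A}_1 \oplus \mathcal{A}_2, \mathcal{B}_1 \oplus \mathcal{B}_2) \le d_H(\mathcal{A}_1,\mathcal{B}_1) + d_H(\mathcal{A}_2,\mathcal{B}_2)$, and the Lipschitz bound under linear maps, $d_H(M\mathcal{A}, M\mathcal{B}) \le \norm{M}_2\, d_H(\mathcal{A},\mathcal{B})$ --- together with $\norm{e^{A t_k}}_2 \le e^{\norm{A}_2 t}$ for all $t_k \le t$, this yields
\begin{equation*}
	d_H(\Z{\S{}}(t), \outerZ{\S{}}(t)) \le \tfrac{t}{\Delta t}\, e^{\norm{A}_2 t}\, d_H(\Z{\S{}}(\Delta t), \outerZ{\S{}}(\Delta t)),
\end{equation*}
and the analogous inequality for the inner approximation.

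The crux is then to prove that the single-step Hausdorff distance is of order $\Delta t^2$. The key observation is that all three single-step sets share the same first-order term $\Delta t\, \S{}$: expanding $e^{A(\Delta t - \theta)} = I_n + \bigO{\Delta t}$ in \eqref{eq:Z}, any point of $\Z{\S{}}(\Delta t)$ equals $\int_0^{\Delta t} s(\theta)\,\mathrm{d}\theta + \bigO{\Delta t^2} = \Delta t\, \bar{s} + \bigO{\Delta t^2}$, where the average $\bar{s} = \tfrac{1}{\Delta t}\int_0^{\Delta t} s(\theta)\,\mathrm{d}\theta$ lies in $\S{}$ by convexity. Since the inner approximation \eqref{eq:innerZ} equals $A^{-1}(e^{A\Delta t} - I_n)\S{} = \sum_{i=0}^{\infty}\tfrac{A^i\Delta t^{i+1}}{(i+1)!}\S{} = \Delta t\,\S{} + \bigO{\Delta t^2}$ and is exactly the set of admissible constant-input solutions, matching $\bar{s}$ to a constant input shows every exact point lies within $\bigO{\Delta t^2}$ of $\innerZ{\S{}}(\Delta t)$. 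For the outer approximation \eqref{eq:outerZ}, the $i = 0$ term is likewise $\Delta t\,\S{}$, while the terms $i \ge 1$ and the remainder $\E{}\Delta t\,\S{}$ from \eqref{eq:E} are each $\bigO{\Delta t^2}$ in norm; matching the $i=0$ component to its constant-input solution (which lies in $\Z{\S{}}(\Delta t)$) places every point of $\outerZ{\S{}}(\Delta t)$ within $\bigO{\Delta t^2}$ of the exact set. In both cases the nestedness $\innerZ{\S{}}(\Delta t) \subseteq \Z{\S{}}(\Delta t) \subseteq \outerZ{\S{}}(\Delta t)$ renders the Hausdorff distance one-sided, so $d_H = \bigO{\Delta t^2}$.

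Combining the two steps gives a total error bounded by $\tfrac{t}{\Delta t}\, e^{\norm{A}_2 t}\, \bigO{\Delta t^2} = \bigO{\Delta t}$, i.e.\ linear convergence as $\Delta t \to 0$. I expect the main obstacle to be the single-step $\bigO{\Delta t^2}$ bound rather than the accumulation: one must verify that the $\bigO{\Delta t}$ leading terms of the exact solution and of each approximation genuinely cancel, since a single-step error of only $\bigO{\Delta t}$ would accumulate to $\bigO{1}$ and destroy convergence. The convexity argument that replaces a time-varying input by its admissible average $\bar{s} \in \S{}$ is the delicate point securing this cancellation for the inner approximation, while the explicit remainder bounds on $\E{}$ play the same role for the outer approximation.
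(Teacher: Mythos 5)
Your proof is correct and takes essentially the same route as the paper's source: the paper itself gives no self-contained argument but defers to \cite[Thm.~1]{Wetzlinger2023TAC} together with \cite[Lemma~2]{Wetzlinger2023TAC}, where the lemma plays exactly the role of your single-step $\bigO{\Delta t^2}$ bound and the theorem performs the accumulation over $t/\Delta t$ propagation steps. Your write-up is a sound reconstruction of that argument, including the two points that actually need care---the cancellation of the leading $\Delta t\,\S{}$ terms via the averaged input $\bar{s} \in \S{}$ (valid since $\S{}$ is a zonotope, hence compact and convex) and the one-sidedness of the Hausdorff distance from the nestedness $\innerZ{\S{}}(\Delta t) \subseteq \Z{\S{}}(\Delta t) \subseteq \outerZ{\S{}}(\Delta t)$.
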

\begin{proof}
See \cite[Theorem~1]{Wetzlinger2023TAC}, based on \cite[Lemma~2]{Wetzlinger2023TAC}.
\end{proof}
\noindent For a piecewise constant trajectory, represented as the matrix %$s \in \R{1 \times \steps{}}$ over $\steps{} \in \N{}$ steps
\begin{equation*}
	\setlength{\arraycolsep}{2pt}
	S = \begin{bmatrix} s(t_0) & s(t_1) & \dotsc & s(t_{\steps{}-1}) \end{bmatrix} \in \R{n \times \steps{}}
\end{equation*}
over $\steps{} \in \N{}$ steps,
the particular solution $\Z{s}(\tau_k) \subset \R{n}$ over a time interval $\tau_k$ can be enclosed by \cite[Prop.~3.2]{Althoff2010diss}
%
%\begin{align}
%\begin{split} \label{eq:outerZu}
%	\outerZu{\tau_k} = &\bigoplus_{j=0}^{k-1} e^{At_{k-1-j}} \big( A^{-1} (e^{A \Delta t} - I_n) \, u(t_j) \big) \\
%	& \oplus \, \Fu{} \{ u(t_k) \} ,
%\end{split}
%\end{align}
%
\begin{equation} \label{eq:outerZtraj}
	\outerZtraj{s}{\tau_k} = \bigoplus_{j=0}^{k-1} e^{At_{k-1-j}} \big( A^{-1} (e^{A \Delta t} - I_n) \, s(t_j) \big) \oplus \, \Fu{} \{ s(t_k) \} ,
\end{equation}
where the interval matrix $\Fu{}$ is \cite[Eq.~(3.9)]{Althoff2010diss}
\begin{equation} \label{eq:G}
	\Fu{} = \bigoplus_{i=2}^{\tayl{}+1} \big[\big( i^{\frac{-i}{i-1}} - i^{\frac{-1}{i-1}} \big) \Delta t^i,0\big] \, \frac{A^{i-1}}{i!} \oplus \E{} \Delta t
\end{equation}
with $\E{}$ as in \eqref{eq:E}.
To enclose the particular solution $\Z{\S{}}(\tau_k) \subset \R{n}$ over a time interval $\tau_k$, we first split the set $\S{}$ into two parts \cite[Sec.~3.2.2]{Althoff2010diss}: $\S{} = \S{0} \oplus \{s\}$ with $s = \centerOp{\S{}}$.
Since $\{\matzeros{}\} \in \S{0}$, we have $\outerZ{\S{0}}(\tau_k) \subseteq \outerZ{\S{0}}(t_{k+1})$ and thus
\begin{equation} \label{eq:Znonzero}
	\Z{\S{}}(\tau_k) \subseteq \outerZ{\S{}}(\tau_k) = \outerZ{\S{0}}(t_{k+1}) \oplus \outerZ{s}(\tau_k) ,
\end{equation}
where the set $\outerZ{\S{0}}(t_{k+1})$ is propagated using \eqref{eq:Zprop}, and the set $\outerZ{s}(\tau_k)$ is computed using \eqref{eq:outerZtraj}.

The number of generators required to represent the particular solution \eqref{eq:Zprop} increases with the number of steps.
To mitigate this issue, one can use zonotope order reduction techniques \cite{Yang2016Automatica}---for ease of presentation, however, we omit this operation from our derivations in \cref{sec:backward_minimal,sec:backward_maximal}.

\section{Problem Statement}
\label{sec:problemstatement}

We consider LTI systems of the form
\begin{equation} \label{eq:linsys}
	\dot{x}(t) = Ax(t) + Bu(t) + Ew(t),
\end{equation}
where $x(t) \in \R{n}$ is the state vector, $A \in \R{n \times n}$ is the state matrix, $B \in \R{n \times m}$ is the input matrix, and $E \in \R{n \times r}$ is the disturbance matrix.
The control input $u(t) \in \R{m}$ and the disturbance $w(t) \in \R{r}$ are bounded by the sets $\inputset{} \subset \R{m}$ and $\distset{} \subset \R{r}$, respectively, which we assume to be zonotopes. \linebreak % this linebreak actually saves a line...
We use $\inputsignals{}$ to denote the set of all input trajectories $u(\cdot)$ for which $\forall t \in [0,\tFinal{}]\colon u(t) \in \inputset{}$ holds and analogously $\distsignals{}$ for the set of all disturbances trajectories $w(\cdot)$.
A solution to \eqref{eq:linsys} at time~$t$ starting from the initial state $x_0 \in \R{n}$ using an input trajectory $u(\cdot) \in \inputsignals{}$ and a disturbance trajectory $w(\cdot) \in \distsignals{}$ is written as $\trajx{t;x_0,u(\cdot),w(\cdot)}$.
We denote the particular solutions \eqref{eq:outerZ}-\eqref{eq:Zprop} due to the sets $B \inputset{}$ and $E \distset{}$ at time $t$ by $\ZU{t}$ and $\ZW{t}$, respectively.

In general, backward reachability analysis aims to compute the set of states that reach a target set $\targetset{} \subset \R{n}$ after a certain elapsed time $t$ (time-point backward reachable set) or at any time within the interval $\tau = [t_0,\tFinal{}]$ (time-interval backward reachable set).
We assume the target set $\targetset{} \subset \R{n}$ to be represented as a polytope.

The existing literature, see \cref{sec:relatedwork}, provides varying definitions for minimal and maximal backward reachable sets, depending on the order in which inputs and disturbances are quantified%
\footnote{A review paper on Hamilton-Jacobi reachability \cite{Chen2018ANNUREV} defines backward reachable sets with input-dependent disturbance strategies.
Their \emph{minimal} backward reachable set is defined using 'there exists a disturbance strategy, for which all inputs [...]' \cite[Def.~2]{Chen2018ANNUREV} and their \emph{maximal} backward reachable set using 'for all disturbance strategies, there exists an input [...]' \cite[Def.~1]{Chen2018ANNUREV}.
Another work \cite{Kurzhanskiy2011Automatica} uses \emph{min} and \emph{max} to represent the universal quantifier and existential quantifier, respectively.
Consequently, the \emph{minmax} backward reachable set is defined via 'for all disturbances, there exists an input [...]' \cite[Def.~2.5]{Kurzhanskiy2011Automatica} and the \emph{maxmin} backward reachable sets via 'there exists an input, for which all disturbances [...]' \cite[Def.~2.6]{Kurzhanskiy2011Automatica}.
To avoid any confusion with these existing definitions, we explicitly use the quantifiers in our naming, which also allows for an easy extension to an arbitrary number of quantifiers.}\label{fn:defs}.
We consider the practical case where the input trajectory $u(t)$ is chosen at the start of a time step---and, thus, quantified first---while the disturbance $w(t)$ reacts arbitrarily over the duration of that time step.

Let us first define the AE backward reachable set, where the target set is composed of unsafe states:
\begin{definition}[AE backward reachable set] \label{def:BRSAE}
The time-point AE backward reachable set
\begin{align}
	\begin{split} \label{eq:def_BRSAE_tp}
		\BRSAE{-t} \coloneqq \big\{ x_0 \in \R{n} \, \big| \,
		&\forall u(\cdot) \in \inputsignals{} \; \exists w(\cdot) \in \distsignals{}\colon \\
		&\trajx{t;x_0,u(\cdot),w(\cdot)} \in \targetset{} \big\}
	\end{split}
\end{align}
contains all states, where for all input trajectories $u(\cdot) \in \inputsignals{}$ there is at least one disturbance trajectory $w(\cdot) \in \distsignals{}$ so that the state trajectory will end up in the target set $\targetset{}$ after time~$t$.
The time-interval AE backward reachable set
\begin{align}
	\BRSAE{-\tau} \coloneqq \big\{ x_0 \in \R{n} \, \big| \,
	&\forall u(\cdot) \in \inputsignals{} \; \exists w(\cdot) \in \distsignals{} \; \exists t \in \tau\colon \nonumber \\
	&\trajx{t;x_0,u(\cdot),w(\cdot)} \in \targetset{} \big\} \label{eq:def_BRSAE_ti}
\end{align}
requires the state to pass through $\targetset{}$ anytime in the time interval $\tau$.
\hfill $\square$
\end{definition}

\begin{figure}
	\begin{tikzpicture}[scale=1,
	every node/.style={font=\small}]
	
	% --- MINIMAL BACKWARD REACHABLE SET ---
	
	\node[font=\large] at (0.5,3.5) {\ding{192}};
	
	% coordinates
%	\draw[red] (0,0) rectangle (8.5,4);
%%	 ticks
%	\foreach \x in {0,1,2,3,4,5,6,7,8}
%		{\draw (\x,0.1) -- (\x,-0.1) node[below] {\x};};
%	\foreach \y in {0,1,2,3,4}
%		{\draw (0.1,\y) -- (-0.1,\y) node[left] {\y};};
	
	% target set = unsafe set
	\draw[goalset] (6.5,3.75) -- (7.5,3.75) -- (8,2.75) -- (7,2.25) -- (6,3.25) -- cycle;
	\node[anchor=west,text=TUMblue] at (7.6,3.6) {$\targetset{}$};
	
	% backward reachable set
	\draw[set] (0.5,1.25) -- (1.5,1.75) -- (2,0.75) -- (1,0.25) -- cycle;
	\node[anchor=west] at (0.1,2) {$\BRSAE{-t}$};
	
	% trajectories starting from inside backward reachable set
	\draw[draw=black,fill=TUMgray] (0.8,1.2) circle(2pt) node[xshift=0.3cm,yshift=-0.2cm] {$x_0^{(1)}$};
	\foreach \xend/\yend in {6.05/3.45,6.35/3.25}{ %7.1/3.15
		\draw[black] (0.8,1.2) .. controls (2.5,3.25) and (5,2) .. (\xend,\yend);};
	%\draw (2.5,3.25) circle(2pt); \draw (5,2) circle(2pt);
%	\draw[red,rounded corners=4pt,xshift=-0.05cm] (6.9,3.3) -- (7.25,3.2) -- (7.25,2.9) -- (6.9,3) -- cycle;
	\begin{scope}[xshift=-0.85cm,yshift=0.25cm]
	\draw[black] (6.95,3.25) .. controls (7.2,3.4) and (7.4,3.15) .. (7.2,3) 
		-- (7.2,3) .. controls (6.975,2.875) and (6.75,3.1) .. (6.95,3.25) -- cycle;
	\end{scope}
%	\draw (7.2,3.4) circle(1pt); \draw (7.4,3.15) circle(1pt);
%	\draw (6.75,3.1) circle(1pt); \draw (6.975,2.875) circle(1pt);
	%\node[anchor=west] at (2.25,2.85) {$x(t;x_0^{(1)},\cdot,\cdot)$};
	\node[anchor=west] at (2,3.5) {$\Rti{t;x_0^{(1)},u^{(1)}(\cdot),\cdot}$};
	\draw (5,3.5) edge[bend left,->,>=stealth',semithick] (6,3.475);
	
	% trajectory starting from outside backward reachable set
	\draw[draw=black,fill=TUMgray] (2.2,1.1) circle(2pt) node[xshift=0.3cm,yshift=-0.2cm] {$x_0^{(2)}$};
	\foreach \xend/\yend in {7.8/2.05,7.675/2.4375}{ %7.6/2.25
		\draw (2.2,1.1) .. controls (3.6,2.75) and (6.1,1.55) .. (\xend,\yend);};
	%\draw (3.6,2.75) circle(2pt); \draw (6.1,1.55) circle(2pt);
	\draw[black,xshift=0.175cm,yshift=-0.21cm] (7.5,2.65) .. controls (7.775,2.8) and (8,2.4) .. (7.6,2.25)
	-- (7.6,2.25) .. controls (7.35,2.175) and (7.275,2.5) .. (7.5,2.65) -- cycle;
	%	\draw (7.775,2.8) circle(1pt); \draw (8,2.4) circle(1pt);
	%	\draw (7.35,2.175) circle(1pt); \draw (7.275,2.5) circle(1pt);
%	\node[anchor=west] at (4,1.6) {$x(t;x_0^{(2)},\cdot,\cdot)$};
	\node[anchor=west] at (3.85,1.5) {$\Rti{t;x_0^{(2)},u^{(2)}(\cdot),\cdot}$};
	\draw (6.8,1.5) edge[bend right,->,>=stealth',semithick] (7.8,2);
	
	% --- MAXIMAL BACKWARD REACHABLE SET (NO STATE CONSTRAINTS) ---
	
	\begin{scope}[yshift=-4cm]
		
	\node[font=\large] at (0.5,3.5) {\ding{193}};
	
	% coordinates
%	\draw[red] (0,0) rectangle (8.5,4);
%	% ticks
%	\foreach \x in {0,1,2,3,4,5,6,7,8}
%		{\draw (\x,0.1) -- (\x,-0.1) node[below] {\x};};
%	\foreach \y in {0,1,2,3,4}
%		{\draw (0.1,\y) -- (-0.1,\y) node[left] {\y};};
	
	% target set
	\draw[goalset] (6.5,3.75) -- (7.5,3.75) -- (8,2.75) -- (7,2.25) -- (6,3.25) -- cycle;
	\node[anchor=west,text=TUMblue] at (7.6,3.6) {$\targetset{}$};
	
	% backward reachable set
	\draw[set] (0.5,1.25) -- (1.5,1.75) -- (2,0.75) -- (1,0.25) -- cycle;
	\node[anchor=west] at (0.1,2) {$\BRSEA{-t}$};
	
	% trajectories starting from inside backward reachable set
	\draw[draw=black,fill=TUMgray] (1.1,1.1) circle(2pt) node[xshift=0.3cm,yshift=-0.2cm] {$x_0^{(1)}$};
	\foreach \xend/\yend in {6.95/3.25,7.2/3}{ %7.1/3.15
		\draw (1.1,1.1) .. controls (2.5,3.25) and (5,2) .. (\xend,\yend);};
	%\draw (2.5,3.25) circle(2pt); \draw (5,2) circle(2pt);
	\draw[black] (6.95,3.25) .. controls (7.2,3.4) and (7.4,3.15) .. (7.2,3) 
		-- (7.2,3) .. controls (6.975,2.875) and (6.75,3.1) .. (6.95,3.25) -- cycle;
%	\draw (7.2,3.4) circle(1pt); \draw (7.4,3.15) circle(1pt);
%	\draw (6.75,3.1) circle(1pt); \draw (6.975,2.875) circle(1pt);
%	\node[anchor=west] at (2.25,2.85) {$x(t;x_0^{(1)},u^{(1)},\cdot)$};
	\node[anchor=west] at (2.85,3.5) {$\Rti{t;x_0^{(1)},u^{(1)}(\cdot),\cdot}$};
	\draw (5.75,3.5) edge[bend left,->,>=stealth',semithick] (6.925,3.275);
	
	% trajectory starting from outside backward reachable set
	\draw[draw=black,fill=TUMgray] (2.2,1.1) circle(2pt) node[xshift=0.3cm,yshift=-0.2cm] {$x_0^{(2)}$};
	\foreach \xend/\yend in {7.5/2.65,7.6/2.25}{ %7.7/2.45
		\draw[black] (2.2,1.1) .. controls (3.6,2.75) and (6.1,1.55) .. (\xend,\yend);};
	%\draw (3.6,2.75) circle(2pt); \draw (6.1,1.55) circle(2pt);
	\draw[black] (7.5,2.65) .. controls (7.775,2.8) and (8,2.4) .. (7.6,2.25)
		-- (7.6,2.25) .. controls (7.35,2.175) and (7.275,2.5) .. (7.5,2.65) -- cycle;
%	\draw (7.775,2.8) circle(1pt); \draw (8,2.4) circle(1pt);
%	\draw (7.35,2.175) circle(1pt); \draw (7.275,2.5) circle(1pt);
%	\node[anchor=west] at (4,1.6) {$x(t;x_0^{(2)},\cdot,\cdot)$};
	\node[anchor=west] at (3.85,1.5) {$\Rti{t;x_0^{(2)},u^{(2)}(\cdot),\cdot}$};
	\draw (6.8,1.5) edge[bend right,->,>=stealth',semithick] (7.65,2.2);
	
	\end{scope}

\end{tikzpicture}
	%	\vspace{-0.5cm}
	\caption{
		Target set $\targetset{}$ with \ding{192} AE backward reachable set $\BRSAE{-t}$ and \ding{193} EA backward reachable set $\BRSEA{-t}$ as well as initial states $x_0$ with corresponding forward reachable sets $\Rti{t}$ for different input trajectories $u(\cdot)$ and disturbance trajectories $w(\cdot)$.
	}
	\label{fig:BRS}
\end{figure}

Case \ding{192} in \cref{fig:BRS} illustrates the time-point set \eqref{eq:def_BRSAE_tp}:
For all states within the AE backward reachable set $\BRSAE{-t}$, such as $x_0^{(1)}$, the target set $\targetset{}$ is unavoidable regardless of the input trajectory $u^{(1)}(\cdot)$.
For any initial state outside $\BRSAE{-t}$ like $x_0^{(2)}$, there is at least one input trajectory $u^{(2)}(\cdot)$, for which there is no disturbance trajectory such that the corresponding forward reachable set intersects $\targetset{}$.

In the following definition of the EA backward reachable set, the target set represents a goal set into which we want to steer the state despite worst-case disturbances.

\begin{definition}[EA backward reachable set] \label{def:BRSEA}
The time-point EA backward reachable set
\begin{align}
\begin{split} \label{eq:def_BRSEA_tp}
	\BRSEA{-t} \coloneqq \big\{ x_0 \in \R{n} \, \big| \,
	&\exists u(\cdot) \in \inputsignals{} \; \forall w(\cdot) \in \distsignals{}\colon \\
	&\trajx{t;x_0,u(\cdot),w(\cdot)} \in \targetset{} \big\}
\end{split}
\end{align}
contains all states, where one input trajectory $u(\cdot)$ can steer the state trajectory into the target set $\targetset{}$ for all potential disturbances $w(\cdot)$.
The time-interval EA backward reachable set
\begin{align}
%\begin{split} \label{eq:def_BRSEA_ti}
	\BRSEA{-\tau} \coloneqq \big\{ x_0 \in \R{n} \, \big| \,
	&\exists u(\cdot) \in \inputsignals{} \; \forall w(\cdot) \in \distsignals{} \; \exists t \in \tau\colon \nonumber \\
	&\trajx{t;x_0,u(\cdot),w(\cdot)} \in \targetset{} \big\} \label{eq:def_BRSEA_ti}
%\end{split}
\end{align}
requires the state to pass through $\targetset{}$ anytime in the time interval $\tau$.
\hfill $\square$
\end{definition}
Case \ding{193} in \cref{fig:BRS} illustrates the time-point set \eqref{eq:def_BRSEA_tp}:
For all states within the EA backward reachable set $\BRSEA{-t}$, such as $x_0^{(1)}$, there exists an input trajectory $u^{(1)}(\cdot)$ reaching the target set regardless of the disturbance.
In contrast, the forward reachable set of an initial state outside of $\BRSEA{-t}$ like $x_0^{(2)}$ is not contained in the target set for any input trajectory $u^{(2)}(\cdot)$.

Let us briefly highlight an important consequence of the two-player game notion in backward reachability analysis:
\begin{proposition}[Union {\cite[Prop.~2]{Mitchell2007HSCC}}] \label{prop:union}
The union of time-point solutions is a subset of the corresponding time-interval solution, i.e.,
\begin{equation*}
	\bigcup_{t \in \tau} \BRSAE{-t} \subseteq \BRSAE{-\tau} , \quad \bigcup_{t \in \tau} \BRSEA{-t} \subseteq \BRSEA{-\tau} .
\end{equation*}
\end{proposition}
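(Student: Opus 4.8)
The plan is to prove both set inclusions by a direct element-chasing argument. The key observation is that the time-interval definitions \eqref{eq:def_BRSAE_ti} and \eqref{eq:def_BRSEA_ti} differ from their time-point counterparts \eqref{eq:def_BRSAE_tp} and \eqref{eq:def_BRSEA_tp} only by an additional innermost existential quantifier $\exists t \in \tau$, which can always be discharged by fixing one concrete reaching time.

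For the AE inclusion, I would start with an arbitrary $x_0 \in \bigcup_{t \in \tau} \BRSAE{-t}$. By definition of the union, there exists a specific $t^* \in \tau$ with $x_0 \in \BRSAE{-t^*}$, so \eqref{eq:def_BRSAE_tp} guarantees that for every $u(\cdot) \in \inputsignals{}$ there is some $w(\cdot) \in \distsignals{}$ with $\trajx{t^*;x_0,u(\cdot),w(\cdot)} \in \targetset{}$. To verify $x_0 \in \BRSAE{-\tau}$, I fix an arbitrary $u(\cdot)$, take the corresponding $w(\cdot)$, and satisfy the extra existential by choosing $t = t^* \in \tau$. Since $u(\cdot)$ was arbitrary, \eqref{eq:def_BRSAE_ti} holds, establishing the first inclusion.

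The EA case is structurally identical: from $x_0 \in \bigcup_{t \in \tau} \BRSEA{-t}$ I extract some $t^* \in \tau$ with $x_0 \in \BRSEA{-t^*}$, which by \eqref{eq:def_BRSEA_tp} yields a single input trajectory $u(\cdot)$ steering the state into $\targetset{}$ at time $t^*$ for every disturbance $w(\cdot)$. Reusing this same $u(\cdot)$ and again choosing $t = t^*$ for every $w(\cdot)$ confirms \eqref{eq:def_BRSEA_ti}.

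There is no genuine obstacle here; the only care required is to respect the quantifier nesting. Because the witness time $t^*$ originates from membership in the union, it is fixed before the trajectories are quantified, so the assignment $t = t^*$ is compatible with the innermost position of $\exists t \in \tau$ in both definitions. This also clarifies why only one direction holds: in the time-interval sets the reaching time may depend on the quantified $u(\cdot)$ and $w(\cdot)$, whereas the union demands a single $t^*$ valid across all of them, so the reverse containment fails in general.
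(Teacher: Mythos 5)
Your proof is correct and follows the same route as the paper, which simply observes that the claim ``follows from the order of quantifiers'' (citing Mitchell): your element-chasing argument is exactly that quantifier-ordering fact made explicit, namely that a witness time $t^*$ fixed \emph{before} the trajectory quantifiers can always be pushed inward to discharge the innermost $\exists t \in \tau$. Your closing remark on why the reverse inclusion fails in general also matches the paper's understanding, so nothing is missing.
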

\begin{proof}
This follows from the order of quantifiers \cite[Prop.~2]{Mitchell2007HSCC}.
\end{proof}

For the runtime complexity analysis of our proposed algorithms in \cref{sec:backward_minimal,sec:backward_maximal}, we assume the following:
\begin{assumption}[Parameters] \label{ass:runtimecomplexity}
The number of steps $\steps{}$ and the truncation order $\tayl{}$ in \eqref{eq:outerZ} are fixed, while the number of halfspaces of the target set $\targetset{}$ and the number of generators of the input set $\inputset{}$ and disturbance set $\distset{}$ are linear in the state dimension $n$.
\hfill $\square$
\end{assumption}

In the next section, we review the state of the art in backward reachability analysis.

\section{Related Work}
\label{sec:relatedwork}

A wide range of different yet similar definitions are labeled \emph{backward reachable set}.
The following literature review discusses the various types in order of increasing complexity.
We discuss approaches in discrete and continuous time as well as for linear and nonlinear dynamics, where uniqueness of solution trajectories and sufficient differentiability are assumed.

\subsection{Autonomous Systems}
\label{ssec:autonomous}

The backward reachable set for the dynamics $\dot{x} = f(x)$ is equal to the forward reachable set for the time-inverted dynamics $\dot{x} = -f(x)$ using the target set $\targetset{}$ as the initial set.
If the target set represents an unsafe set, one can use established forward reachability algorithms for computing outer approximations of linear systems \cite{Girard2006HSCC1,LeGuernic2010NAHS} and nonlinear systems \cite{Althoff2008CDC,Chen2015diss}.
If the target set is a goal set, we instead compute an inner approximation, for which there also exist many methods for linear systems \cite{Girard2006HSCC1,Frehse2015CDC} as well as nonlinear systems \cite{Xue2017a,Goubault2017HSCC,Goubault2020CSL,Chen2014FMCAD,Kochdumper2020CDC}.
As this special case is not the focus of our work, we refer the interested reader to the cited literature.

\subsection{Hamilton-Jacobi Reachability}
\label{ssec:HJ}

A well-established framework for computing reachable sets is known as \emph{Hamilton-Jacobi (HJ) reachability}:
It is based on the proof that the reachable set of a continuous-time dynamical system is the zero sublevel set of the Hamilton-Jacobi-Isaacs partial differential equation (PDE) \cite[Thm.~2]{Mitchell2005TAC}.
The value function of the sublevel set is evaluated over a gridded state space, thus the computation scales exponentially with the system dimension \cite{Bansal2017CDC}.
Still, the framework is very versatile, covering the general case of nonlinear dynamics with all variations of competing inputs and disturbances as presented in our subsequent overview of minimal and maximal reachability.

\subsection{Minimal Reachability}
\label{ssec:minimal}

\subsubsection{Unperturbed Case}
\label{sssec:minimal_noW}

Here, \cref{def:BRSAE} simplifies to
\begin{align}
\begin{split} \label{eq:def_BRSA}
	\BRSA{-\tau} \coloneqq \big\{ x_0 \in \R{n} \, \big| \, &\forall u(\cdot) \in \inputsignals{} \; \exists t \in \tau\colon \\
		&\trajx{t;x_0,u(\cdot),\matzeros{}} \in \targetset{} \big\} .
\end{split}
\end{align}
%\begin{equation} \label{eq:def_BRSA_tp}
%	\BRSA{-t} \coloneqq \big\{ x_0 \in \R{n} \, \big| \, \forall u(\cdot) \in \inputsignals{}:
%	\trajx{t;x_0,u(\cdot),0} \in \targetset{} \big\} ,
%\end{equation}
The scalability issue of HJ reachability has first been tackled for time-point solutions by decomposing the dynamics into subsystems and reconstructing the full solution thereafter \cite{Chen2017ICRA}, which was later generalized to time-interval solutions \cite{Chen2018TAC}.
However, these approaches did not provide rigorous results for cases with conflicting controls between subspaces, which was later addressed \cite{Lee2019ICRA}.
% Furthermore, HJ reachability has been combined with reinforcement learning, enabling the analysis of up to $18$-dimensional systems at the cost of safety guarantees \cite{Fisac2019}.

\subsubsection{Perturbed Case}
\label{sssec:minimal_withW}

An approach for decoupled dynamics has been presented in \cite{Chen2015CDC}. % to compute the set $\BRSAE{-t}$ with competing inputs and disturbances, see \cref{def:BRSAE}.
In the context of systems coupled by multi-agent interaction, the decoupled computation has been augmented by a higher-level control using mixed integer programming \cite{Chen2016CDC}.
Moreover, a deep neural network has been trained to output the value function describing the reachable set, which improves the scalability but invalidates all safety guarantees \cite{Bansal2021ICRA}.
Other ideas to improve performance include warm-starting and adaptive grid sampling \cite{Herbert2021ICRA}.

%Au:
%\cite{Chen2017}: BRS Au, target set is unsafe, compute BRS in subspace, compose so that reconstruction is exact, even with coupling(!), via projection
%\cite{Chen2018TAC}: max/min BRS/BRT without disturbances, extension/generalization of \cite{Chen2017}
%\cite{Lee2019}: HJ dimensionality reduction, solving issues from \cite{Chen2017,Chen2018TAC}: Au without disturbances
%\cite{Fisac2019}: HJ combined with RL, contraction mapping converging to undiscounted HJ value function, up to 18D, learn control policies?
%AuEw:
%\cite{Bansal2021}: deep neural network to represent value function in PDE solver for HJ reachability, AE ti (incl. state constraints)
%\cite{Herbert2021}: focus on computing safe controllers online, decomposition, safe control invariant set via HJ incl. constraints, AuEw ti
%\cite{Chen2015}: decoupled HJI formulation to compose high-dimensional solution from low-dimensional ones; time-invariant systems with decoupled dynamics (only 4D/6D), AuEw ti
%\cite{Chen2016CDC}: use mixed integer programming to specify higher-level control that enables analysis of multi-agent systems (more than 2 conflicting interests) via HJI reachability, EA ti case

\subsection{Maximal Reachability}
\label{ssec:maximal}

\subsubsection{Unperturbed Case}
\label{sssec:maximal_noW}

Here, \cref{def:BRSEA} simplifies to
\begin{align}
\begin{split} \label{eq:def_BRSE}
	\BRSE{-\tau} \coloneqq \big\{ x_0 \in \R{n} \, \big| \, &\exists u(\cdot) \in \inputsignals{} \; \exists t \in \tau\colon \\
	&\trajx{t;x_0,u(\cdot),\matzeros{}} \in \targetset{} \big\} .
\end{split}
\end{align}
%\begin{equation*} %\label{eq:def_BRSE_tp}
%	\BRSE{-t} \coloneqq \big\{ x_0 \in \R{n} \, \big| \, \exists u(\cdot) \in \inputsignals{}:
%	\trajx{t;x_0,u(\cdot),0} \in \targetset{} \big\} ,
%\end{equation*}
This is equivalent to the forward reachable set for the time-inverted dynamics $\dot{x} = -f(x)$ using the target set as the start set \cite[Lemma~2]{Jones2019CDC}.
As a consequence, all algorithms computing inner approximations for dynamical systems with inputs are applicable, e.g., \cite{Goubault2020CSL} and \cite[Sec.~4.3.3]{Kochdumper2022diss}.
Another approach rescales an initial guess until the forward reachable set is contained in the target set \cite{Schuermann2021TAC2}.
For polynomial systems, sum-of-squares (SOS) optimization is used to compute polynomial lower or upper bounds on the reachable set \cite{Jones2019CDC}.
%A different approach uses interval analysis \cite{Lhommeau2011_ACS}

\subsubsection{Perturbed Case}
\label{sssec:maximal_withW}

Algorithms using set propagation exist for both linear and nonlinear discrete-time systems, with ellipsoids \cite{Kurzhanskiy2011Automatica} or zonotopes \cite{Yang2022CSL,Yang2022TCADICS} as a set representation.
The original HJ reachability was introduced in \cite{Mitchell2005TAC} for the set $\BRSEA{-t}$, with extensions such as decoupling approaches \cite{Chen2018ANNUREV} attempting to alleviate the computational burden.
For dissipative control-affine nonlinear systems, one can reformulate the computation of backward reachable sets as an optimization problem whose variables parametrize a semi-algebraic set representing the reachable set.
By restricting this parametrization to sum-of-square polynomials, one can model the optimization as a semi-definite program, whose number of variables is polynomial in the state dimension, but exponential in the degree of the sum-of-square polynomial \cite{Ahmadi2017CDC}.
The computation of backward reachable sets via SOS programming can be followed by synthesizing a controller to steer the states into the target set \cite{Yin2019ACC}.
This algorithm has been improved by merging both steps into one, including accommodation of control saturation \cite{Yin2021TAC}.
An extension covers a more general class of perturbations represented by integral quadratic constraints \cite{Yin2021CSL}.

An extended definition requires the trajectories to remain within a state constraint set $\constrset{} \subset \R{n}$ at all times:
\begin{align}
\begin{split} \label{eq:def_BRSEA_ti_constr}
	\BRSEAconstr{-\tau} \coloneqq \big\{ &x_0 \in \R{n} \, \big| \,
	\exists u(\cdot) \in \inputsignals{} \; \forall w(\cdot) \in \distsignals{} \\
	&\exists t \in \tau\colon \trajx{t;x_0,u(\cdot),w(\cdot)} \in \targetset{}, \\
	&\forall t' \in [0, \tFinal{}]\colon \trajx{t';x_0,u(\cdot),w(\cdot)} \in \constrset{} \big\} , \nonumber
\end{split}
\end{align}
where $\tFinal{}$ is the upper bound of the time interval $\tau$.
HJ reachability supports this definition \cite{Margellos2011TAC}---including a time-varying state constraint set \cite{Fisac2015HSCC}---as do SOS approaches by solving a single semi-definite program \cite{Xue2020}.

\subsection{Related Concepts}

A related concept is the viability or discriminating kernel:
\begin{definition}[Viability/Discriminating kernel {\cite[Def.~6]{Kaynama2011CDCECC}, \cite[Def.~2]{Kaynama2015TAC}}] \label{def:kernel}
The \emph{discriminating} kernel of a set $\mathcal{K} \subset \R{n}$ is
\begin{align*}
	\mathcal{D}(\tau,\mathcal{K}) \coloneqq \big\{ x_0 \in \mathcal{K} \, \big| \, &\forall w(\cdot) \in \distsignals{} \; \exists u(\cdot) \in \inputsignals{} \; \forall t \in \tau\colon \\
	&\trajx{t;x_0,u(\cdot),w(\cdot)} \in \mathcal{K} \big\} .
\end{align*}
It contains all initial states in $\mathcal{K}$, where for all potential disturbances $w(\cdot)$ there exists an input trajectory $u(\cdot)$ to keep the state in $\mathcal{K}$ over the time interval $\tau$.
Omitting the disturbance $w(\cdot)$ yields the \emph{viability} kernel $\mathcal{V}(\mathcal{K})$.
\hfill $\square$
\end{definition}

Inner approximations of the viability kernel for linear systems can be computed using ellipsoids \cite{Kaynama2011CDCECC,Kaynama2012HSCC} or polytopes \cite{Maidens2013Automatica} as a set representation.
The ellipsoidal methods have later been extended to computing the discriminating kernel in \cite{Kaynama2015TAC}.
%viability kernel:
%\cite{Kaynama2011}: ellipsoids, continual reachable set (inner approx. of viability kernel) computed for linear cont. systems, no dist
%\cite{Kaynama2012}: inner approx. viability kernel (exists u s.t. state constraint is satisfied) for linear cont. sys based on maximal reachable set computed using \cite{Kurzhanskiy2000}
%\cite{Maidens2013}: same as \cite{Kaynama2012}, extended to polytopes and support functions
%\cite{Kaynama2015}: LTI with dist, discriminating kernel (AwEu s.t. At x stays in K), robust maximal BRS (AwEu), ellipsoids, discriminating kervel is viability kernel with disturbances, safety-preserving feedback controller

Another perspective is the computation of \emph{forward} minimal/maximal reachable sets, where the quantifiers are equal to \cref{def:BRSAE,def:BRSEA}, but the start set is given instead of the target set.
To this end, Kaucher arithmetic has been applied \cite{Goubault2019HSCC} as well as contraction of an outer approximation computed using Taylor models \cite{Goubault2020CSL}.

\smallskip

Our overview of the related literature shows that \cref{def:BRSAE,def:BRSEA} represent general cases of backward reachable sets.
Current approaches using set propagation only deal with discrete-time systems, such as \cite{Kurzhanskiy2011Automatica,Yang2022CSL,Yang2022TCADICS}, while HJ reachability \cite{Chen2018ANNUREV} and SOS approaches \cite{Yin2019ACC,Yin2021TAC,Yin2021CSL} are limited due to exponential complexity.
Subsequently, we present the first propagation-based approach to compute inner and outer approximations of backward reachable sets for continuous-time systems of the form in \eqref{eq:linsys} with polynomial runtime complexity.

%other:
%\cite{Han2017}: ?
%\cite{Han2018}: controller synthesis, DT nonlinear polynomial
%\cite{Goubault2019}: AuEu, Au; forward perspective Kaucher arithmetic
%\cite{Goubault2020}: extension of \cite{Goubault2017} to full sets instead of projections, also forward perspective

%\cite{Jiang2020}: HJ application parking tasks
%\cite{Lee2021}: HJ Hopf-Lax for linear systems, focused on control synthesis, talks about reach-avoid (FRS?BRS?)

% questions/comments
% - backward reachability for infinite time horizons is what?
% - viability kernel for infinite time horizon is rci set
% - viability kernel is complement of minimal reachable set \cite{Chen2018ANNUREV}

\section{Minimal Backward Reachability Analysis}
\label{sec:backward_minimal}

In this section, we compute inner and outer approximations of the time-point AE backward reachable set $\BRSAE{-t}$ given in \eqref{eq:def_BRSAE_tp} in \cref{ssec:BRSAE_tp} as well as an outer approximation of the time-interval AE backward reachable set $\BRSAE{-\tau}$ given in \eqref{eq:def_BRSAE_ti} in \cref{ssec:outerBRSAE_ti}.
We show that the runtime complexity of our algorithms is polynomial in the state dimension $n$, examine the approximation errors, and discuss simplifications for the unperturbed cases $\BRSA{-t}$ and $\BRSA{-\tau}$ defined in \eqref{eq:def_BRSA}.

\subsection{Time-Point Solution}
\label{ssec:BRSAE_tp}

We base our computations of the time-point solution $\BRSAE{-t}$ on the following proposition:
\begin{proposition}[Time-point AE backward reachable set] \label{prop:BRSAE_tp}
The backward reachable set $\BRSAE{-t}$ defined in \eqref{eq:def_BRSAE_tp} can be computed by
\begin{equation} \label{eq:BRSAE_tp}
	\BRSAE{-t} = e^{-At} \big( ( \targetset{} \oplus -\ZW{t} ) \ominus \ZU{t} \big) .
\end{equation}
\end{proposition}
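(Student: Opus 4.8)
The plan is to start from the variation-of-constants formula for \eqref{eq:linsys}, which by the superposition principle splits the solution at time $t$ into one homogeneous and two particular contributions,
\[
	\trajx{t;x_0,u(\cdot),w(\cdot)} = e^{At} x_0 + p_u + p_w ,
\]
where $p_u = \int_0^t e^{A(t-\theta)} B u(\theta) \, \mathrm{d}\theta$ and $p_w = \int_0^t e^{A(t-\theta)} E w(\theta) \, \mathrm{d}\theta$. Comparing with \eqref{eq:Z} for $\S{} = B\inputset{}$ and $\S{} = E\distset{}$, I would argue that as $u(\cdot)$ ranges over $\inputsignals{}$ the point $p_u$ ranges exactly over $\ZU{t}$, and as $w(\cdot)$ ranges over $\distsignals{}$ the point $p_w$ ranges exactly over $\ZW{t}$. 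The membership condition $\trajx{t;x_0,u(\cdot),w(\cdot)} \in \targetset{}$ then becomes $e^{At} x_0 + p_u + p_w \in \targetset{}$, and substituting $y = e^{At} x_0$ lets me work with the quantified statement $\forall p_u \in \ZU{t} \; \exists p_w \in \ZW{t}\colon y + p_u + p_w \in \targetset{}$.

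Next I would translate the two quantifiers into set operations, innermost first, so that the nesting of the operations mirrors the nesting of the quantifiers. For a fixed $p_u$, the inner existential statement $\exists p_w \in \ZW{t}\colon y + p_u + p_w \in \targetset{}$ is equivalent to $y + p_u \in \targetset{} \oplus -\ZW{t}$, since $y + p_u + p_w \in \targetset{}$ holds for some $p_w \in \ZW{t}$ exactly when $y + p_u$ lies in $\bigcup_{p_w \in \ZW{t}}(\targetset{} - p_w)$, which is $\targetset{} \oplus -\ZW{t}$ by \eqref{eq:def_minkSum}. The remaining universal statement then reads $\{y\} \oplus \ZU{t} \subseteq \targetset{} \oplus -\ZW{t}$, which by the definition \eqref{eq:def_minkDiff} of the Minkowski difference is precisely $y \in (\targetset{} \oplus -\ZW{t}) \ominus \ZU{t}$. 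Undoing the substitution via the invertible map $x_0 = e^{-At} y$ and using \eqref{eq:def_linMap} yields the claimed identity.

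The main obstacle, and the step I would treat most carefully, is the correspondence between trajectories and particular-solution points: the equivalence of $\forall u(\cdot) \in \inputsignals{}$ with $\forall p_u \in \ZU{t}$ (and likewise for the disturbance) requires that the map $u(\cdot) \mapsto p_u$ be surjective onto $\ZU{t}$, which rests on a measurable-selection argument ensuring that every admissible integrand selection of $B\inputset{}$ is realized by some $u(\cdot) \in \inputsignals{}$. The second delicate point is respecting the quantifier order: because the disturbance is chosen after the input, its existential must be resolved as the inner Minkowski sum and the input's universal as the outer Minkowski difference; reversing this nesting would change the resulting set, so I would emphasize that the two set operations are not interchangeable.
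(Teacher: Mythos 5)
Your proposal is correct and follows essentially the same route as the paper's proof: the paper establishes the identical chain of equivalences (Minkowski difference $\leftrightarrow$ universal quantifier over $\ZU{t}$, Minkowski sum $\leftrightarrow$ existential quantifier over $\ZW{t}$, superposition linking particular-solution points to trajectories), merely written in the reverse direction, starting from membership in $e^{-At}\big((\targetset{} \oplus -\ZW{t}) \ominus \ZU{t}\big)$ and unfolding toward the definition \eqref{eq:def_BRSAE_tp}. Your added care about surjectivity of $u(\cdot) \mapsto p_u$ and the non-interchangeability of the two set operations is a sound elaboration of steps the paper leaves implicit, not a different argument.
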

\begin{proof}
See Appendix.
\end{proof}

The formula \eqref{eq:BRSAE_tp} holds independently of the chosen set representations.
Next, we compute approximations in polynomial time assuming a polytopic target set $\targetset{}$ and zonotopic particular solutions $\ZW{t}$ and $\ZU{t}$.

\subsubsection{Outer Approximation}
\label{sssec:outerBRSAE_tp}

The main difficulty in evaluating \eqref{eq:BRSAE_tp} is the Minkowski sum of a polytope in halfspace representation and a zonotope, for which there exists no known polynomial-time algorithm%
\footnote{Other polytope representations, e.g., the Z-representation \cite[Sec.~3.3]{Kochdumper2022diss}, allow for computing the Minkowski sum with a zonotope in polynomial time, but we require the halfspace representation of $(\targetset{} \oplus -\ZW{t})$ for the subsequent computation of the Minkowski difference with $\ZU{t}$ in \eqref{eq:BRSAE_tp}.}.
We overestimate the influence of the disturbance by $\outerZW{t} \supseteq \ZW{t}$ using \eqref{eq:outerZ} and underestimate the influence of the control input by $\innerZU{t} \subseteq \ZU{t}$ using \eqref{eq:innerZ}.
The following proposition provides a scalable yet outer approximative evaluation for the Minkowski sum of a polytope in halfspace representation and a zonotope:
\begin{proposition} (Outer approximation of Minkowski sum) \label{prop:minkSum_polyzono}
Given a polytope $\poly{} = \polyHRep{\polymat{},\polyvec{}} \subset \R{n}$ with $\cons{}$ constraints and a zonotope $\Z{} \subset \R{n}$, their Minkowski sum can be enclosed by
\begin{align}
\begin{split} \label{eq:minkSum_polyzono}
	&\poly{} \oplus \Z{} \subseteq \poly{} \ooplus \Z{} \coloneqq \polyHRep{\polymat{},\polyvec{} + \polyvectilde{}} , \\ 
	&\forall j \in \Nint{1}{\cons{}}\colon \polyvectilde{(j)} = \sF{\Z{},\polymat{(j,\cdot)}^\top} ,
\end{split}
\end{align}
where we introduce the operator $\ooplus$ to distinguish this operation from the exact Minkowski sum.
The runtime complexity is $\bigO{\cons{} n\gamma}$.
\end{proposition}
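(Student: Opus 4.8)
The plan is to establish the set inclusion pointwise and then read off the runtime directly from the zonotope support function formula \eqref{eq:sF_zono}. Since the candidate enclosure $\poly{} \ooplus \Z{}$ is itself a polytope in halfspace representation that reuses the constraint matrix $\polymat{}$ of $\poly{}$ and only relaxes the offset vector, it suffices to show that every point of the Minkowski sum $\poly{} \oplus \Z{}$ satisfies each of the $\cons{}$ shifted inequalities $\polymat{(j,\cdot)} x \leq \polyvec{(j)} + \polyvectilde{(j)}$.

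First I would take an arbitrary $x \in \poly{} \oplus \Z{}$ and, using \eqref{eq:def_minkSum}, write $x = p + z$ with $p \in \poly{}$ and $z \in \Z{}$. Fixing a row index $j \in \Nint{1}{\cons{}}$, I would split $\polymat{(j,\cdot)} x = \polymat{(j,\cdot)} p + \polymat{(j,\cdot)} z$ and bound the two summands separately. The first obeys $\polymat{(j,\cdot)} p \leq \polyvec{(j)}$ because $p \in \poly{}$ satisfies the defining inequalities of \cref{def:polytope}, while the second obeys $\polymat{(j,\cdot)} z \leq \max_{z' \in \Z{}} \polymat{(j,\cdot)} z' = \sF{\Z{}, \polymat{(j,\cdot)}^\top} = \polyvectilde{(j)}$ directly from the definition of the support function in \cref{def:sF}. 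Adding the two bounds yields $\polymat{(j,\cdot)} x \leq \polyvec{(j)} + \polyvectilde{(j)}$; as $j$ was arbitrary, $x$ lies in $\polyHRep{\polymat{}, \polyvec{} + \polyvectilde{}}$, which is precisely the claimed inclusion.

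For the runtime, I would observe that constructing $\poly{} \ooplus \Z{}$ only requires evaluating the $\cons{}$ entries $\polyvectilde{(j)} = \sF{\Z{}, \polymat{(j,\cdot)}^\top}$. By \eqref{eq:sF_zono}, each evaluation amounts to one inner product with the center and $\gamma$ inner products with the generators, all in dimension $n$, costing $\bigO{n\gamma}$ per direction; the $\cons{}$ evaluations therefore cost $\bigO{\cons{} n \gamma}$ in total, as stated.

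I do not expect a genuine obstacle here, as the only conceptual point is recognizing that the support function of $\Z{}$ in the outward normal direction $\polymat{(j,\cdot)}^\top$ yields exactly the amount by which the $j$th halfspace of $\poly{}$ must be relaxed to absorb $\Z{}$. One can further note that the bound is attained for each $j$ by summing a maximizer $z^\ast$ of the support function with a point $p^\ast$ on the $j$th face of $\poly{}$, so $\poly{} \ooplus \Z{}$ is in fact the tightest polytope with normals $\polymat{}$ enclosing $\poly{} \oplus \Z{}$ whenever the corresponding constraint is non-redundant.
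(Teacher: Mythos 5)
Your proof is correct, and its core idea coincides with the paper's: relax each halfspace offset of $\poly{}$ by the support function of $\Z{}$ in the corresponding normal direction, then count the $\cons{}$ evaluations of \eqref{eq:sF_zono} at $\bigO{n\gamma}$ each. The execution differs slightly. The paper argues at the level of support-function calculus: it inserts $\poly{} \oplus \Z{}$ into the generic polytope enclosure \eqref{eq:sFset}, applies the Minkowski-sum additivity identity \eqref{eq:minkSum_sF}, and then identifies $\sF{\poly{},\polymat{(j,\cdot)}^\top}$ with $\polyvec{(j)}$. You instead verify the inclusion pointwise, decomposing $x = p + z$ via \eqref{eq:def_minkSum} and bounding $\polymat{(j,\cdot)}p$ and $\polymat{(j,\cdot)}z$ separately. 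The substance is the same, but your version is marginally more robust on one detail: the paper's step $\sF{\poly{},\polymat{(j,\cdot)}^\top} = \polyvec{(j)}$ is an equality only when the $j$th constraint is tight (non-redundant); for a redundant constraint one merely has $\sF{\poly{},\polymat{(j,\cdot)}^\top} \leq \polyvec{(j)}$, which still yields the stated enclosure but makes the displayed equality imprecise. Your pointwise bound $\polymat{(j,\cdot)}p \leq \polyvec{(j)}$ holds unconditionally, so no such caveat arises. Your closing observation on attainment at $p^\ast + z^\ast$ is also correct, and it is precisely the condition under which the paper's equality step is exact.
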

\begin{proof}
See Appendix.
\end{proof}
The outer approximation in \eqref{eq:minkSum_polyzono} can be further tightened by additional support function evaluations\footnote{In fact, incorporating all infinite directions $\dir{} \in \R{n}$ with $\norm{\dir{}}_2 = 1$ would return the exact result $\poly{} \oplus \Z{}$ since any compact convex set is uniquely determined by the intersection of the support functions in all directions \cite{Girard2008IFAC}.}.
%We will also evaluate the $2n$ axis-aligned support functions, which increases the runtime complexity to $\bigO{(\cons{}+2n)n\gamma}$.
Using \cref{prop:minkSum_polyzono}, we obtain an outer approximation of \eqref{eq:BRSAE_tp} by
\begin{align}
	&\BRSAE{-t} \overset{\eqref{eq:BRSAE_tp}}{=} e^{-At} \big( ( \targetset{} \oplus -\ZW{t} ) \ominus \ZU{t} \big) \nonumber \\
	&\hspace{5pt} \overset{\eqref{eq:outerZ},\,\eqref{eq:innerZ}}{\subseteq} e^{-At} \big( ( \targetset{} \oplus -\outerZW{t} ) \ominus \innerZU{t} \big) \nonumber \\
	&\hspace{1pt} \overset{\text{\cref{prop:minkSum_polyzono}}}{\subseteq} \! e^{-At} \big( ( \targetset{} \ooplus -\outerZW{t} ) \ominus \innerZU{t} \big) \eqqcolon \outerBRSAE{-t} , \label{eq:outerBRSAE_tp}
\end{align}
resulting in a polytope representing $\outerBRSAE{-t}$.
%The computation of a sequence of outer approximations $\forall k \in \Nint{1}{\steps{}}: \outerBRSAE{-t_k}, s = \tFinal{}/\Delta t$ is summarized in \cref{alg:BRSAE_tp}.

\subsubsection{Inner Approximation}
\label{sssec:innerBRSAE_tp}

We now underestimate the influence of the disturbance by $\innerZW{t} \subseteq \ZW{t}$ and overestimate the influence of the control input by $\outerZU{t} \supseteq \ZU{t}$.
Using the following re-ordering relation for the compact, convex, nonempty sets $\S{1},\S{2},\S{3} \subset \R{n}$ \cite[Lemma~1(i)]{Yang2022CSL}
\begin{equation} \label{eq:reordering}
	( \S{1} \oplus \S{2} ) \ominus \S{3} \supseteq ( \S{1} \ominus \S{3} ) \oplus \S{2} ,
\end{equation}
we can inner approximate \eqref{eq:BRSAE_tp} by
\begin{align}
	&\BRSAE{-t} \overset{\eqref{eq:BRSAE_tp}}{=} e^{-At} \big( ( \targetset{} \oplus -\ZW{t} ) \ominus \ZU{t} \big) \nonumber \\
	&\overset{\eqref{eq:outerZ},\,\eqref{eq:innerZ}}{\supseteq} e^{-At} \big(  ( \targetset{} \oplus -\innerZW{t} ) \ominus \outerZU{t} \big) \nonumber \\
	&\hspace{8pt} \overset{\eqref{eq:reordering}}{\supseteq} e^{-At} \big( \conZonoOp{\targetset{} \ominus \outerZU{t}} \oplus -\innerZW{t} \big) \eqqcolon \innerBRSAE{-t}, \label{eq:innerBRSAE_tp}
\end{align}
where we evaluate $\targetset{} \ominus \outerZU{t}$ by \eqref{eq:minkDiff_poly} and convert the resulting polytope to a constrained zonotope using \cref{alg:conZonoOp} to efficiently evaluate the Minkowski sum with $-\innerZW{t}$.

\subsubsection{Runtime Complexity}
\label{sssec:BRSAE_tp_bigO}

%Let the number of iterations for the propagation of the particular solution \eqref{eq:Zprop} be denoted by $\steps{}$.
Under \cref{ass:runtimecomplexity} and following \cref{tab:setops}, the outer approximative Minkowski sum from \cref{prop:minkSum_polyzono}, the Minkowski difference, and the linear map in the computation of the outer approximation $\outerBRSAE{-t}$ are all $\bigO{n^3}$, while the computation of the inner approximation $\innerBRSAE{-t}$ is dominated by the conversion to a constrained zonotope, which is $\bigO{n^{4.5}}$.
% Under \cref{ass:runtimecomplexity}, the computation of the outer approximation $\outerBRSAE{-t}$ is marginally dominated by the over-approximative Minkowski sum, which is $\bigO{(\cons{}+2n)n, since the Minkowski difference and linear map are at most $\bigO{(\cons{}+2n)n \steps{}n}$ and $\bigO{(\cons{}+2n)n^2}$, respectively; all these operations are essentially $\bigO{n^3}$ for $n \gg \steps{}$ and under \cref{ass:runtimecomplexity}.

\subsubsection{Approximation Error}
\label{sssec:BRSAE_tp_eps}

Both approximations have a non-zero approximation error even in the limit $\Delta t \to 0$ due to using \cref{prop:minkSum_polyzono} and the re-ordering in \eqref{eq:reordering}, respectively. %\rmk{MA: more detailed explanation}
The approximation error of the more important outer approximation $\outerBRSAE{-t}$ can be made arbitrarily small in all directions selected for the evaluation of \cref{prop:minkSum_polyzono}, as the Hausdorff distance between the computed particular solutions and their exact counterpart goes to $0$ as $\Delta t \to 0$ by \cref{prop:convergence}.

%Note that even if the target set $\targetset{}$ was representable as a zonotope, an exact evaluation of the Minkowski difference would still scale exponentially \cite{Kulmburg2021}.

\subsubsection{Unperturbed Case}
\label{sssec:BRSAE_tp_noW}

In the case $\distset{} = \{\matzeros{}\}$, we compute the backward reachable set defined in \eqref{eq:def_BRSA} with $\tau = t$, for which \eqref{eq:outerBRSAE_tp} and \eqref{eq:innerBRSAE_tp} simplify accordingly, resulting in the same respective runtime complexities.
The approximation error depends on the error of the particular solution, which can be made arbitrarily small according to \cref{prop:convergence}.

\subsection{Time-Interval Solution}
\label{ssec:outerBRSAE_ti}

For the time-interval solution $\BRSAE{-\tau}$, we compute an outer approximation enclosing all states that cannot avoid entering the target set $\targetset{}$.
We reformulate the definition in \eqref{eq:def_BRSAE_ti} to
\begin{equation} \label{eq:BRSAE_ti}
	\BRSAE{-\tau} = \bigcap_{u^*(\cdot) \in \inputsignals{}} \BRSE{-\tau;u^*(\cdot)} ,
\end{equation}
where
\begin{align}
	\begin{split} \label{eq:def_BRSE_ustar}
		\BRSE{-\tau;u^*(\cdot)} \coloneqq \big\{ x_0 \in \R{n} \, \big| \,
		&\exists w(\cdot) \in \distsignals{} \; \exists t \in \tau\colon \\
		&\trajx{t;x_0,u^*(\cdot),w(\cdot)} \in \targetset{} \big\}
	\end{split}
\end{align}
is the forward reachable set for the time-inverted dynamics using a single input trajectory $u^*(\cdot) \in \inputsignals{}$, which is equivalent to \eqref{eq:def_BRSE} by replacing $u$ by $w$.
Consequently, the set $\BRSAE{-\tau}$ is the intersection of the sets $\BRSE{-\tau;u^*(\cdot)}$ for all potential input trajectories $u^*(\cdot) \in \inputsignals{}$.

\subsubsection{Outer Approximation}
\label{sssec:outerBRSAE_ti}

Let us first introduce our high-level idea for computing an outer approximation of \eqref{eq:BRSAE_ti}:
Note that the intersection of \emph{any} number of $\BRSE{-\tau;u^*(\cdot)}$ in \eqref{eq:BRSAE_ti} always leads to a sound outer approximation.
Obviously, we want a tight outer approximation, that is, a small intersection stemming from a well selected, finite number of input trajectories $u^*(\cdot) \in \inputsignals{}$.
For this selection, we use a heuristic approach via support function reachability, which simplifies the intersection of the individual reachable sets in \eqref{eq:BRSAE_ti}.

The main two steps of our computation are illustrated in \cref{fig:BRSAE_ti}.
Step 1: We enclose the reachable set $\BRSE{-\tau;u^*(\cdot)}$ defined in \eqref{eq:def_BRSE_ustar} using standard methods \cite[Sec.~3.2]{Althoff2010diss}:
\begin{align}
	\outerBRSE{-\tau;u^*(\cdot)} &= \bigcup_{k \in \{0,...,\steps{}-1\}} \outerBRSE{-\tau_k;u^*(\cdot)} \label{eq:outerBRSE} \\
	\begin{split} \label{eq:outerBRSE_k}
		\outerBRSE{-\tau_k;u^*(\cdot)} &= \convOp{e^{-At_{k+1}} \conZonoOp{\targetset{}}, e^{-At_k} \conZonoOp{\targetset{}}} \\
		&\hspace{15pt} \oplus \F{} e^{-At_{k+1}} \conZonoOp{\targetset{}} \oplus -\outerZW{-\tau_k} \\
		&\hspace{15pt} \oplus -\outerZtraj{{u}}{-\tau_k} ,
	\end{split}
\end{align}
where we use the center input trajectory $\forall t \in \tau\colon u_0(t) = \centerOp{\inputset{}}$ for $u^*(\cdot)$.
Note that the union in \eqref{eq:outerBRSE} is represented implicitly over a number of steps $\steps{}$ with $\tau = \tau_0 \cup ... \cup \tau_{\steps{}-1}$.

% second step
Step 2: We enclose other reachable sets $\BRSE{-\tau;u_j(\cdot)}, j \in \Nint{1}{\numInp{}}$ by halfspaces $\polyHRep{\dir{j}^\top, p_{(j)}}$ for an efficient intersection with $\BRSE{-\tau;u_0(\cdot)}$ computed in step 1.
To obtain a small intersection in \eqref{eq:BRSAE_ti}, we maximize the extent of individual $\BRSE{-\tau;u_j(\cdot)}$ toward certain directions; we heuristically choose the $2n$ columns in $[I_n \; -I_n]$.
For each $\BRSE{-\tau;u_j(\cdot)}$, we evaluate the support function in the direction $\dir{j}$ \cite[Sec.~4.1]{Wetzlinger2023HSCC}
\begin{align}
	\begin{split} \label{eq:outerBRSE_sF}
		&\sF{\outerBRSE{-\tau_k;u_j(\cdot)},\dir{j}} \\
		&= \max \big\{ \sF{\targetset{},(e^{-At_k})^\top \dir{j}}, \sF{\targetset{},(e^{-At_{k+1}})^\top \dir{j}} \big\} \\
		&\hspace{11pt} + \sF{\F{} \targetset{}, (e^{-At_{k+1}})^\top \dir{j} } + \sF{\outerZW{-\tau_k},\dir{j}} + \beta_j
	\end{split}
\end{align}
where $\beta_j$ represents the support function of the particular solution due to the input trajectory $u_j(\cdot)$, which is chosen such that the effect of the input set $\inputset{}$ in the direction $\dir{j}$ is minimized:
\begin{equation} \label{eq:innerZtraj}
	\beta_j = \sF{\innerZtraj{u_j}{-t_k},\dir{j}} = - \sF{\innerZU{-t_k},-\dir{j}} .
\end{equation}
Next, we prove that the outlined procedure indeed computes an outer approximation:

\begin{figure}[t]
\centering
\begin{tikzpicture}[scale=1,
	every node/.style={font=\small}]
	
	% --- TIME-INTERVAL MINIMAL BACKWARD REACHABLE SET ---
	
%	% coordinates
%	\draw[blue,step=1,very thin] (0,0) grid (8.5,4.5);
%	% ticks
%	\foreach \x in {0,1,2,3,4,5,6,7,8}
%		{\draw (\x,0.1) -- (\x,-0.1) node[below] {\x};};
%	\foreach \y in {0,1,2,3,4}
%		{\draw (0.1,\y) -- (-0.1,\y) node[left] {\y};};
	
	% target set = unsafe set
	\begin{pgfonlayer}{foreground}		
		\draw[goalset] (6.2,3.25) -- (7.2,3.25) -- (7.7,2.25) -- (6.7,1.75) -- (5.7,2.75) -- cycle;
		\node[text=TUMblue] at (6.7,2.6) {$\targetset{}$};
	\end{pgfonlayer}

	% intersected set
	\begin{scope}[]
		\clip (0.5,1.59) rectangle (8,4);
		\fill[lightgray] (1.16,2.14) .. controls (3,2.5) and (5,3.75) .. (7.2,3.25)
		-- (6.7,1.75) .. controls (4.5,2.5) and (3,1.5) .. (1.84,0.8) -- (0.94,1.3) -- cycle;
		\node[anchor=west] at (5.75,3.75) {$\outerBRSAE{-\tau}$};
		\draw (5.75,3.75) edge[->,bend right,>=stealth'] (5,3);
	\end{scope}
	
	% outer-approximation of time-interval minimal backward reachable set (u = 0)
	\draw[black] (1.16,2.14) .. controls (3,2.5) and (5,3.75) .. (7.2,3.25)
	-- (6.7,1.75) .. controls (4.5,2.5) and (3,1.5) .. (1.84,0.8) -- (0.94,1.3) -- cycle;
	%	\draw (3,2.5) circle(2pt); \draw (5,3.75) circle(2pt);
	%	\draw (3,1.5) circle(2pt); \draw (4.5,3) circle(2pt);
	\node[anchor=west] at (3,0.7) {$\outerBRSE{-\tau;u_0(\cdot)}$};
	\draw (3,0.7) edge[->,bend left,>=stealth'] (2.25,1);
	
	% outer-approximation of time-interval minimal backward reachable set (u_1 s.t. max in +y)
	\draw[draw=black,dashed] (1.26,3.24) .. controls (3,2.5) and (5,3.75) .. (7.2,3.25)
	-- (6.7,1.75) .. controls (4.5,2.5) and (3,1.5) .. (1.4,1.6) -- (0.94,2.4) -- cycle;
	%	\draw (3,2.5) circle(2pt); \draw (5,3.75) circle(2pt);
	%	\draw (3,1.5) circle(2pt); \draw (4.5,3) circle(2pt);
	\node[anchor=west] at (2.25,3.75) {$\outerBRSE{-\tau;u_1(\cdot)}$};
	\draw (2.25,3.75) edge[->,bend right,>=stealth'] (1.5,3.2);
	
	\draw[->,>=stealth'] (0.4,1.45) -- node[right,midway] {$\dir{1}$} ++(0,-0.5);
	\draw[red,semithick] (0.25,1.59) -- ++(7.75,0) node[yshift=-0.35cm,xshift=-1.35cm] {$\sF{\outerBRSE{-\tau;u_1(\cdot)},\dir{1}}$};
	
\end{tikzpicture}
\caption{Computation of an outer approximation of the AE backward reachable set $\outerBRSAE{-\tau}$ by intersection of multiple backward reachable sets for specific input trajectories (shown for two trajectories $u_0(\cdot), u_1(\cdot)$):
	The set $\outerBRSE{-\tau;u_0(\cdot)}$ is intersected with the halfspace constructed by the support function of the other set $\outerBRSE{-\tau;u_1(\cdot)}$ in the direction $\dir{1}$, with the input trajectory minimizing the extent of the set in that direction.}
\label{fig:BRSAE_ti}
\end{figure}

\begin{theorem}[Time-interval AE backward reachable set] \label{thm:BRSAE_ti}
Let the subset $\inputsignalssub{} \subset \inputsignals{}$ be composed of $\numInp{} \in \N{}$ input trajectories.
The time-interval AE backward reachable set \eqref{eq:def_BRSAE_ti} can be outer approximated by
\begin{equation} \label{eq:outerBRSAE_ti}
	\outerBRSAE{-\tau} = 
		\bigcup_{k \in \{0,...,\steps{}-1\}} \big( \outerBRSE{-\tau_k;u_0(\cdot)} \cap \polyHRep{N,p} \big)
\end{equation}
where $\outerBRSE{-\tau;u_0(\cdot)}$ is computed by \eqref{eq:outerBRSE} using the center trajectory $u_0(\cdot)$, for which $\forall t \in \tau\colon u(t) = \centerOp{\inputset{}}$ holds, and
\begin{align}
\begin{split} \label{eq:innerPoly}
	&\forall j \in \Nint{1}{\numInp{}}\colon N_{(j,\cdot)} = \dir{j}^\top, \\
	&\forall j \in \Nint{1}{\numInp{}}\colon p_{(j)} = \max_{k \in \{1,...,\steps{}\}} \sF{\innerBRSE{-\tau_k;u_j(\cdot)},\dir{j}}
\end{split}
\end{align}
constructs a polytope via support function evaluations of the outer approximation $\outerBRSE{-\tau;u_j(\cdot)}$ of the backward reachable set \eqref{eq:def_BRSE_ustar} using all $\numInp{}$ input trajectories in $\inputsignalssub{}$.
\end{theorem}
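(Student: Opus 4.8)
The plan is to prove the single inclusion $\BRSAE{-\tau} \subseteq \outerBRSAE{-\tau}$, since soundness of the outer approximation is all that \eqref{eq:outerBRSAE_ti} asserts. I would start from the reformulation \eqref{eq:BRSAE_ti}, which expresses the exact set as the intersection $\bigcap_{u^*(\cdot) \in \inputsignals{}} \BRSE{-\tau;u^*(\cdot)}$ over \emph{all} admissible input trajectories. Since each $u_j(\cdot)$ is admissible, $\{u_0(\cdot),\dots,u_{\numInp{}}(\cdot)\} \subseteq \inputsignals{}$, and intersecting over this smaller family only enlarges the set, so monotonicity gives $\BRSAE{-\tau} \subseteq \BRSE{-\tau;u_0(\cdot)} \cap \bigcap_{j=1}^{\numInp{}} \BRSE{-\tau;u_j(\cdot)}$. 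The whole proof then reduces to showing that the right-hand side of \eqref{eq:outerBRSAE_ti} contains this finite intersection, which I would split into the center factor $(j=0)$, represented explicitly, and the remaining factors $(j\geq 1)$, replaced by the halfspaces of \eqref{eq:innerPoly}.

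For the center factor, $\BRSE{-\tau;u_0(\cdot)}$ is the forward reachable set of the time-inverted dynamics driven by the disturbance as its existentially quantified input, so the standard enclosure \eqref{eq:outerBRSE}--\eqref{eq:outerBRSE_k}---bounding the homogeneous part by the convex hull together with $\F{}$ as in \eqref{eq:outerHti}, and the disturbance particular solution from the outside---yields $\BRSE{-\tau;u_0(\cdot)} \subseteq \outerBRSE{-\tau;u_0(\cdot)} = \bigcup_{k} \outerBRSE{-\tau_k;u_0(\cdot)}$. Once every halfspace is shown to contain $\BRSAE{-\tau}$, I would obtain $\BRSAE{-\tau} \subseteq \outerBRSE{-\tau;u_0(\cdot)} \cap \polyHRep{N,p}$ and distribute the intersection across the implicit union, $\big(\bigcup_k \outerBRSE{-\tau_k;u_0(\cdot)}\big) \cap \polyHRep{N,p} = \bigcup_k \big(\outerBRSE{-\tau_k;u_0(\cdot)} \cap \polyHRep{N,p}\big)$, which is precisely \eqref{eq:outerBRSAE_ti}.

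The heart of the argument, and the step I expect to be the main obstacle, is verifying that each halfspace $\polyHRep{\dir{j}^\top,p_{(j)}}$ contains $\BRSAE{-\tau}$, i.e.\ that $\sF{\BRSAE{-\tau},\dir{j}} \leq p_{(j)}$. The crucial observation is that the $j$-th construction implicitly fixes a \emph{single} input trajectory $u_j(\cdot)$ whose particular-solution value is selected through the inner enclosure $\innerZU{-t_k}$ in \eqref{eq:innerZtraj}; because $\innerZU{-t_k} \subseteq \ZU{-t_k}$, this value is realized by an admissible $u_j(\cdot) \in \inputsignals{}$, so the reformulation gives $\BRSAE{-\tau} \subseteq \BRSE{-\tau;u_j(\cdot)}$ and hence $\sF{\BRSAE{-\tau},\dir{j}} \leq \sF{\BRSE{-\tau;u_j(\cdot)},\dir{j}}$. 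It then remains to show $p_{(j)} \geq \sF{\BRSE{-\tau;u_j(\cdot)},\dir{j}}$. Writing $\BRSE{-\tau;u_j(\cdot)} = \bigcup_k \BRSE{-\tau_k;u_j(\cdot)}$ turns this support function into the maximum over $k$, matching the maximization in \eqref{eq:innerPoly}; for each $k$ the superposition principle and \eqref{eq:minkSum_sF} split it into a homogeneous, a disturbance, and an input contribution, and the per-step evaluation \eqref{eq:outerBRSE_sF} over-approximates the first two (convex hull plus $\F{}$ through \eqref{eq:linMap_sF}, and the disturbance solution from the outside) while reproducing the input contribution \emph{exactly} for the selected trajectory. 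Each computed summand therefore dominates the exact one, and taking the maximum over $k$ yields $p_{(j)} \geq \sF{\BRSE{-\tau;u_j(\cdot)},\dir{j}} \geq \sF{\BRSAE{-\tau},\dir{j}}$, closing the argument.

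The delicate bookkeeping I would check most carefully concerns the signs of the input term and the asymmetric use of the two enclosures. Because the input particular solution enters the time-inverted set with the sign opposite to the disturbance, minimizing the extent of $\BRSE{-\tau;u_j(\cdot)}$ along $\dir{j}$ amounts to picking the admissible particular-solution value that is extremal in the appropriate orientation; the identity $\beta_j = -\sF{\innerZU{-t_k},-\dir{j}}$ in \eqref{eq:innerZtraj} is meant to encode exactly this choice while keeping the value inside $\ZU{-t_k}$, which is what preserves both admissibility and soundness. The essential structural point is that only the trajectory \emph{selection} relies on the inner enclosure $\innerZU{\cdot}$, whereas the homogeneous and disturbance contributions are taken from the \emph{outer} enclosures in \eqref{eq:outerBRSE_sF}; replacing either of the latter by an inner enclosure would turn $p_{(j)}$ into a lower bound and invalidate $\sF{\BRSAE{-\tau},\dir{j}} \leq p_{(j)}$. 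Checking that these signs align so that $\beta_j$ really corresponds to an admissible \emph{extent-minimizing} trajectory, rather than its reflection, is in my view the most error-prone part of the proof.
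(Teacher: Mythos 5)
Your proposal follows essentially the same route as the paper's proof: restrict the intersection in \eqref{eq:BRSAE_ti} to the finite subset $\inputsignalssub{}$, enclose the center-trajectory set via \eqref{eq:outerBRSE}--\eqref{eq:outerBRSE_k}, replace each remaining factor $\BRSE{-\tau;u_j(\cdot)}$ by a support-function halfspace whose offset uses the extent-minimizing (yet admissible, via $\innerZU{-t_k} \subseteq \ZU{-t_k}$) trajectory, and finally distribute the intersection over the implicit union. The only difference is presentational: you verify containment of $\BRSAE{-\tau}$ halfspace-by-halfspace, whereas the paper encloses each $\outerBRSE{-\tau;u_j(\cdot)}$ by a full polytope $\polyHRep{N,p^{(j)}}$ via \eqref{eq:sFset} and then takes the element-wise minimum $p = \min_j p^{(j)}$, which is the same argument.
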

\begin{proof}
See Appendix.
\end{proof}

\cref{alg:BRSAE_ti} implements \cref{thm:BRSAE_ti}:
In the main loop, we iteratively compute an explicit outer approximation $\outerBRSE{-\tau;u^*(\cdot)}$ of time-inverted dynamics $\dot{\tilde{x}}(t) = -A\tilde{x}(t) - Bu^*(t) - Ew(t)$, where we use the center input trajectory $\forall t \in \tau\colon u^*(t) = \centerOp{\inputset{}}$.
Furthermore, we choose the columns in $N = [I_n \; -I_n]$ as the minimizing directions for the other input trajectories $u(\cdot) \in \inputsignals{}$ and propagate the corresponding support functions of the corresponding outer approximations for the time-inverted dynamics (lines~\ref{alg:BRSAE_ti:innerZW_prop}-\ref{alg:BRSAE_ti:polyvec}).
Ultimately, the intersection of the constructed polytope $\polyHRep{N,p}$ with the explicit outer approximation $\outerBRSE{-\tau;u^*(\cdot)}$ (\cref{alg:BRSAE_ti:union}) yields the outer approximation of the time-interval AE backward reachable set $\outerBRSAE{-\tau}$.
Please note that the union is represented implicitly by a sequence of time-interval solutions.

\begin{algorithm}[!t]
\caption{Time-interval AE backward reachable set} \label{alg:BRSAE_ti}
\textbf{Require:} Linear system $\dot{x} = Ax + Bu + Ew$,
target set~$\targetset{} = \polyHRep{\polymat{},\polyvec{}}$, input set~$\inputset{} = \zono{c_u,G_u}$,
disturbance set~$\distset{} = \zono{c_w,G_w}$, time interval~$\tau = [t_0,\tFinal{}]$, steps~$\steps{} \in \N{}$

\textbf{Ensure:} Outer approximation of the time-interval backward reachable set $\outerBRSAE{-\tau}$

\setstretch{1.2}
\begin{algorithmic}[1]
	\State $\Delta t \gets (\tFinal{}-t_0)/\steps{}$, $w \gets \centerOp{\distset{}} + \centerOp{\inputset{}}$
		\label{alg:BRSAE_ti:init}
	\State $\distsetzero{} \gets \zono{\matzeros{},G_w}$, $\inputsetzero{} \gets \zono{\matzeros{},G_u}$%
		\label{alg:BRSAE_ti:shiftWU}
	\State $\F{} \gets$ Eq.~\eqref{eq:F}, $\CZ{} \gets \conZonoOp{\targetset{}}$
		\label{alg:BRSAE_ti:targetsetCZ}
		\textcolor{gray}{\Comment{see~\cref{alg:conZonoOp}}}
	\State $N \gets [I_n \; -\!I_n]$, $\numInp{} \gets 2n$, $\forall j \in \Nint{1}{\numInp{}}\colon p_{(j)} \gets \infty$
		\label{alg:BRSAE_ti:initpoly}
	\State pre-compute $\outerZWzero{-\Delta t}$ and $\outerZWzero{-t_0}$
		\label{alg:BRSAE_ti:outerZW_preprop}
		\textcolor{gray}{\Comment{see~\eqref{eq:outerZ}, \eqref{eq:Zprop}}}
	\State $\forall j \in \Nint{1}{\numInp{}}\colon$pre-compute $\sF{\outerZWzero{-t_0},N_{(\cdot,j)}}$ and
	\Statex \hspace{48pt} $\sF{\innerZUzero{-t_0},-N_{(\cdot,j)}}$
		\textcolor{gray}{\Comment{see~\eqref{eq:linMap_sF}, \eqref{eq:minkSum_sF}, \eqref{eq:Zprop}}}
	\For{$k \gets 0$ to $\steps{}-1$} \label{alg:BRSAE_ti:loopstart}
		\State $t_{k+1} \! \gets t_k + \Delta t$, $\tau_k \gets [t_k,t_{k+1}]$, $\outerZtraj{w}{-\tau_k} \gets$ Eq.~\eqref{eq:outerZtraj}%
			\label{alg:BRSAE_ti:incrementtime}
		\State $\outerZWzero{-t_{k+1}} \gets \outerZWzero{-t_k} \oplus e^{-A t_k} \outerZWzero{-\Delta t}$
			\label{alg:BRSAE_ti:outerZWzero_prop}
		\State $\outerZW{-\tau_k} \gets \outerZWzero{-t_{k+1}} \oplus \outerZtraj{w}{-\tau_k}$%
			\label{alg:BRSAE_ti:outerZW_prop}
		\State $\outerBRSE{-\tau_k} \gets \convOp{e^{-At_{k+1}} \CZ{}, e^{-At_k} \CZ{}}$
			\label{alg:BRSAE_ti:outerBRSE}
		\Statex \hspace{67pt} $\oplus \, \F{} e^{-At_{k+1}} \CZ{} \oplus -\outerZW{-\tau_k}$
		\State $\forall j \in \Nint{1}{\numInp{}}\colon$propagate $\sF{\outerZW{-\tau_k},N_{(\cdot,j)}}$ and
			\label{alg:BRSAE_ti:innerZW_prop}
		\Statex \hspace{63pt} $\sF{\innerZUzero{-t_{k+1}},-N_{(\cdot,j)}}$
			\textcolor{gray}{\Comment{see~\eqref{eq:linMap_sF}, \eqref{eq:minkSum_sF}}}
		\State $\forall j \in \Nint{1}{\numInp{}}\colon \sF{\innerBRSE{-\tau_k},N_{(\cdot,j)}} \gets$ Eq.~\eqref{eq:outerBRSE_sF}
		\State $\forall j \in \Nint{1}{\numInp{}}\colon p_{(j)} \hspace{-1pt} \gets \hspace{-1pt} \max \Big\{ p_{(j)}, \sF{\innerBRSE{-\tau_k},N_{(\cdot,j)}}\hspace{-3pt}\Big\}$%
			\label{alg:BRSAE_ti:polyvec}
	\EndFor \label{alg:BRSAE_ti:loopend}
	\State $\outerBRSAE{-\tau} = \bigcup_{k=0}^{\steps{}-1} \outerBRSE{-\tau_k} \cap \polyHRep{N,p}$ 
		\textcolor{gray}{\Comment{see \eqref{eq:intersection_CZ}}}
		\label{alg:BRSAE_ti:union}
\end{algorithmic}
\end{algorithm}

\subsubsection{Runtime Complexity}
\label{sssec:outerBRSAE_ti_bigO}

The dominating operations are the conversion of the target set $\targetset{}$ to a constrained zonotope in \cref{alg:BRSAE_ti:targetsetCZ} and the intersection in \cref{alg:BRSAE_ti:union}, which are both $\bigO{n^{4.5}}$ according to \cref{tab:setops} and under \cref{ass:runtimecomplexity}.

\subsubsection{Approximation Error}
\label{sssec:outerBRSAE_ti_eps}

The approximation error of the intermediate result $\outerBRSE{-\tau;u^*(\cdot)}$ in \eqref{eq:outerBRSE} converges to $0$ for $\Delta t \to 0$, since the approximation error of particular solution $\outerZW{t}$ converges to $0$ by \cref{prop:convergence}, the error term $\F{} e^{-At_{k+1}} \conZonoOp{\targetset{}}$ converges to $\{\matzeros{}\}$ as $\lim_{\Delta t \to 0} \F{} = [\matzeros{},\matzeros{}]$ \cite[Lemma~3]{Wetzlinger2023TAC}, and all sets are closed under the applied set operations.
For a zero approximation error everywhere, one would have to consider all combinations of directions of the support function of the particular solution $\ZU{t}$ and directions, in which to compute the intersection with $\outerBRSE{-\tau}$.

\subsubsection{Unperturbed Case}
\label{sssec:BRSAE_ti_noW}

Setting $\distset{} = \{\matzeros{}\}$ removes every occurrence of the particular solution $\outerZW{t}$ in \cref{alg:BRSEA_ti}.
Both runtime complexity and approximation error are unchanged.

\section{Maximal Backward Reachability Analysis}
\label{sec:backward_maximal}

In this section, we compute inner and outer approximations of the time-point EA backward reachable set $\BRSEA{-t}$ given in \eqref{eq:def_BRSEA_tp} in \cref{ssec:BRSEA_tp} as well as an inner approximation of the time-interval EA backward reachable set $\BRSEA{-\tau}$ given in \eqref{eq:def_BRSEA_ti} in \cref{ssec:BRSEA_ti}.
We show that the runtime complexity of our algorithms is polynomial in the state dimension $n$, examine the approximation errors, and discuss simplifications for the unperturbed cases $\BRSE{-t}$ and $\BRSE{-\tau}$ defined in \eqref{eq:def_BRSE}.

\subsection{Time-Point Solution}
\label{ssec:BRSEA_tp}

We base the computation of the backward reachable set $\BRSEA{-t}$ on the following proposition:
\begin{proposition}[Time-point EA backward reachable set] \label{prop:BRSEA_tp}
The backward reachable set $\BRSEA{-t}$ defined in \eqref{eq:def_BRSEA_tp} can be computed by
\begin{equation} \label{eq:BRSEA_tp}
	\BRSEA{-t} = e^{-At} \big( ( \targetset{} \ominus \ZW{t} ) \oplus -\ZU{t} \big) .
\end{equation}
\end{proposition}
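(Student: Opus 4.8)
The plan is to reduce the trajectory-level quantifiers in \eqref{eq:def_BRSEA_tp} to point-level quantifiers over the particular-solution sets, and then rewrite the resulting logical conditions as the stated set operations. First I would write the closed-form solution of \eqref{eq:linsys} via the variation-of-constants formula,
\[
	\trajx{t;x_0,u(\cdot),w(\cdot)} = e^{At} x_0 + p_u + p_w ,
\]
where $p_u = \int_0^t e^{A(t-\theta)} B u(\theta)\,\mathrm{d}\theta$ and $p_w = \int_0^t e^{A(t-\theta)} E w(\theta)\,\mathrm{d}\theta$ are the particular solutions due to the input and the disturbance. By the definition of the particular solution in \eqref{eq:Z}, as $u(\cdot)$ ranges over $\inputsignals{}$ the value $p_u$ ranges over exactly $\ZU{t}$, and likewise $p_w$ ranges over exactly $\ZW{t}$ as $w(\cdot)$ ranges over $\distsignals{}$.

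The key observation is that, because $u$ and $w$ enter additively and independently, the attainable value of $p_u$ does not constrain the attainable value of $p_w$. Hence ``$\exists u(\cdot) \in \inputsignals{} \; \forall w(\cdot) \in \distsignals{}\colon \trajx{t;x_0,u(\cdot),w(\cdot)} \in \targetset{}$'' is equivalent to ``$\exists p_u \in \ZU{t} \; \forall p_w \in \ZW{t}\colon e^{At} x_0 + p_u + p_w \in \targetset{}$''. Writing $y \coloneqq e^{At} x_0$, I would first dispatch the inner universal quantifier: $y + p_u + p_w \in \targetset{}$ for all $p_w \in \ZW{t}$ means $\{y + p_u\} \oplus \ZW{t} \subseteq \targetset{}$, which by the definition of the Minkowski difference \eqref{eq:def_minkDiff} is exactly $y + p_u \in \targetset{} \ominus \ZW{t}$.

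Next I would dispatch the existential quantifier over $p_u$: the condition ``$\exists p_u \in \ZU{t}\colon y + p_u \in \targetset{} \ominus \ZW{t}$'' holds if and only if $y \in (\targetset{} \ominus \ZW{t}) \oplus (-\ZU{t})$, using the definition of the Minkowski sum \eqref{eq:def_minkSum} together with the reflected set $-\ZU{t}$. Finally, since $e^{At}$ is invertible, substituting $y = e^{At} x_0$ and applying the linear map $e^{-At}$ as in \eqref{eq:def_linMap} yields the claimed formula \eqref{eq:BRSEA_tp}.

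I expect the main obstacle to be the rigorous justification of the reduction from trajectory quantifiers to point quantifiers---specifically, that the integral map is \emph{onto} each particular-solution set (so that every $p_u \in \ZU{t}$ is realized by some admissible $u(\cdot) \in \inputsignals{}$, and analogously for $\ZW{t}$), and that the input trajectory may be fixed \emph{before} and \emph{independently of} the disturbance without loss of generality. This separability is what collapses the $\exists u\,\forall w$ game to the clean Minkowski-difference/sum structure, and it relies entirely on the additive, decoupled appearance of $u$ and $w$ in \eqref{eq:linsys}. Once this attainability-and-independence step is established, the remaining manipulations are routine applications of the set-operation definitions.
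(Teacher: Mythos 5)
Your proposal is correct and follows essentially the same route as the paper's own proof: a chain of equivalences that uses superposition to split the trajectory into $e^{At}x_0 + p_u + p_w$, the definition of the Minkowski difference to discharge the universal quantifier over disturbances, the Minkowski sum with $-\ZU{t}$ to discharge the existential quantifier over inputs, and invertibility of $e^{At}$ for the final linear map. The only difference is cosmetic—you argue from the definition toward the formula while the paper chains equivalences in the reverse direction—and you make explicit the attainability/independence step (that $p_u$ and $p_w$ range exactly and independently over $\ZU{t}$ and $\ZW{t}$) that the paper leaves implicit in its last equivalence.
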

\begin{proof}
% This is a continuization of the discrete-time case proven in \cite[Thm.~2.4]{Kurzhanskiy2011Automatica}.
See Appendix.
\end{proof}
\noindent
The formula \eqref{eq:BRSEA_tp} above holds independently of the chosen set representations.
Next, we compute approximations in polynomial time assuming a polytopic target set $\targetset{}$ and zonotopic particular solutions $\ZW{t}$ and $\ZU{t}$.

\subsubsection{Outer and Inner Approximation}

We evaluate the Minkowski difference $\targetset{} \ominus \ZW{t}$ using \eqref{eq:minkDiff_poly} and convert the resulting polytope $\targetset{} \ominus \ZW{t}$ to a constrained zonotope using \cref{alg:conZonoOp} for the Minkowski sum with the zonotope $-\ZU{t}$.
For an outer approximation, we underestimate the influence of the disturbance by $\innerZW{t} \subseteq \ZW{t}$ and overestimate the influence of the control input by $\outerZU{t} \supseteq \ZU{t}$:
\begin{align}
	&\BRSEA{-t} = e^{-At} \big( ( \targetset{} \ominus \ZW{t} ) \oplus -\ZU{t} \big) \nonumber \\
	&\overset{\eqref{eq:outerZ},\,\eqref{eq:innerZ}}{\subseteq} \! e^{-At} \big( \conZonoOp{ \targetset{} \ominus \innerZW{t} } \oplus -\outerZU{t} \big) \eqqcolon \outerBRSEA{-t} \label{eq:outerBRSEA_tp}
\end{align}
and vice versa to compute an inner approximation:
\begin{align}
	&\BRSEA{-t} = e^{-At} \big( ( \targetset{} \ominus \ZW{t} ) \oplus -\ZU{t} \big) \nonumber \\
	&\overset{\eqref{eq:outerZ},\,\eqref{eq:innerZ}}{\supseteq} \! e^{-At} \big( \conZonoOp{ \targetset{} \ominus \outerZW{t} } \oplus -\innerZU{t} \big) \eqqcolon \innerBRSEA{-t} . \label{eq:innerBRSEA_tp}
\end{align}

\subsubsection{Runtime Complexity}
\label{sssec:BRSEA_tp_bigO}

Under \cref{ass:runtimecomplexity}, the dominating operation in \eqref{eq:outerBRSEA_tp} and \eqref{eq:innerBRSEA_tp} is the conversion to a constrained zonotope, which is $\bigO{n^{4.5}}$, as the Minkowski sums, Minkowski differences, and linear maps are $\bigO{n^3}$.

\subsubsection{Approximation Error}
\label{sssec:BRSEA_tp_eps}

The sets are closed under the applied operations, so that the entire approximation error is incurred by the outer and inner approximations of the particular solutions, which converges to $0$ as $\Delta t \to 0$ by \cref{prop:convergence}.
Hence, the approximation errors of $\outerBRSEA{-t}$ and $\innerBRSEA{-t}$ also approach $0$ as $\Delta t \to 0$.

\subsubsection{Unperturbed Case}
\label{sssec:BRSEA_tp_noW}

For $\distset{} = \{\matzeros{}\}$, both approximations in \eqref{eq:outerBRSEA_tp}-\eqref{eq:innerBRSEA_tp} simplify accordingly, yielding $\BRSE{-t}$, see \eqref{eq:def_BRSE} with $\tau = t$, with the same runtime complexity and behavior of the approximation error in the limit $\Delta t \to 0$ as above.

\subsection{Time-Interval Solution}
\label{ssec:BRSEA_ti}

For the time-interval solution $\BRSEA{-\tau}$ as defined in \eqref{eq:def_BRSEA_ti}, we want to compute an inner approximation so that all states are guaranteed to reach the target set $\targetset{}$.
Our main idea is to inner approximate the union of time-point solutions $\bigcup_{t \in \tau} \BRSEA{-t}$, which by \cref{prop:union} is an inner approximation of the time-interval solution $\BRSEA{-\tau}$.
We now show how to compute this inner approximation in polynomial time.

\subsubsection{Inner Approximation}
\label{sssec:innerBRSEA_ti}

We require the following lemma: % containment relation
\begin{lemma}[Distributivity of Minkowski difference over convex hull] \label{lmm:convMinkDiff}
For three compact, convex, and nonempty sets $\S{1}, \S{2}, \S{3} \subset \R{n}$, we have
\begin{equation*}
	\convOp{\S{1} \ominus \S{3},\S{2} \ominus \S{3}} \subseteq \convOp{\S{1},\S{2}} \ominus \S{3} .
\end{equation*}
\end{lemma}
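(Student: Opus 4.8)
The plan is to prove the inclusion elementwise. First I would fix an arbitrary point $x \in \convOp{\S{1}\ominus\S{3},\,\S{2}\ominus\S{3}}$ and invoke the definition of the convex hull in \eqref{eq:def_conv} to write $x = \lambda a + (1-\lambda) b$ for some $a \in \S{1}\ominus\S{3}$, some $b \in \S{2}\ominus\S{3}$, and some $\lambda \in [0,1]$. The target membership $x \in \convOp{\S{1},\S{2}}\ominus\S{3}$ unfolds, via the definition of the Minkowski difference in \eqref{eq:def_minkDiff}, into the statement $\{x\}\oplus\S{3} \subseteq \convOp{\S{1},\S{2}}$, so it suffices to show $x + s_3 \in \convOp{\S{1},\S{2}}$ for every $s_3 \in \S{3}$.

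Next I would unpack the two hypotheses on $a$ and $b$ using \eqref{eq:def_minkDiff}: namely $\{a\}\oplus\S{3}\subseteq\S{1}$ and $\{b\}\oplus\S{3}\subseteq\S{2}$, which means $a + s_3 \in \S{1}$ and $b + s_3 \in \S{2}$ for all $s_3 \in \S{3}$. The key step, and the only part of the argument that is not purely mechanical, is to split the shared displacement as $s_3 = \lambda s_3 + (1-\lambda)s_3$ and rewrite
\[
	x + s_3 = \lambda a + (1-\lambda)b + s_3 = \lambda(a + s_3) + (1-\lambda)(b + s_3).
\]
Since $a + s_3 \in \S{1}$, $b + s_3 \in \S{2}$, and $\lambda \in [0,1]$, the right-hand side is a convex combination of a point in $\S{1}$ and a point in $\S{2}$, hence lies in $\convOp{\S{1},\S{2}}$ by \eqref{eq:def_conv}. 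As $s_3 \in \S{3}$ was arbitrary, this yields $\{x\}\oplus\S{3}\subseteq\convOp{\S{1},\S{2}}$, i.e. $x \in \convOp{\S{1},\S{2}}\ominus\S{3}$, establishing the claimed inclusion.

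I do not expect a genuine obstacle here: the result holds at the set-theoretic level, and the compactness, convexity, and nonemptiness assumptions are not strictly needed for this one-sided containment (they presumably keep the Minkowski differences well-behaved in the surrounding development). The only subtlety worth flagging is why the inclusion must be stated one-sidedly: the right-hand side allows the point subtracted from $\convOp{\S{1},\S{2}}$ to be chosen jointly, whereas performing the Minkowski differences before the convex hull forces $a$ and $b$ to absorb every $s_3 \in \S{3}$ separately, which is the more restrictive requirement that the left-hand side encodes.
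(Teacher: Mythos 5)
Your proof is correct, and its central algebraic step---splitting the shared displacement as $s_3 = \lambda s_3 + (1-\lambda)s_3$ so that $\lambda a + (1-\lambda)b + s_3 = \lambda(a+s_3) + (1-\lambda)(b+s_3)$---is exactly the regrouping at the heart of the paper's proof. Where you genuinely differ is the wrap-up: the paper argues at the set level, first establishing $\convOp{\S{1} \ominus \S{3},\S{2} \ominus \S{3}} \oplus \S{3} \subseteq \convOp{\S{1},\S{2}}$ and then cancelling $\S{3}$ via the identity $(\S{} \oplus \S{3}) \ominus \S{3} = \S{}$, which it imports from an external reference (the same source as the re-ordering relation \eqref{eq:reordering}) and which itself is where compactness and convexity enter; you instead verify membership in $\convOp{\S{1},\S{2}} \ominus \S{3}$ pointwise, directly from the definition \eqref{eq:def_minkDiff}. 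Your route is more elementary and self-contained: it needs no cancellation law, and it substantiates your closing remark that the compactness, convexity, and nonemptiness hypotheses are not actually required for this one-sided inclusion---a dependence that the paper's proof formally retains through the cited identity. What the paper's phrasing buys in exchange is consistency with the set-level calculus it uses throughout (all steps are inclusions between set expressions, reusing lemmas already invoked elsewhere), whereas your argument stands alone; both are valid, and the mathematical content is the same.
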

\begin{proof}
See Appendix.
\end{proof}
%\begin{lemma}[Distributivity of Minkowski difference over linear map] \label{lmm:linMapMinkDiff}
%For an invertible matrix $M \in \R{n \times n}$ and two compact, convex, and nonempty sets $\S{1}, \S{2} \in \R{n}$, we have
%\begin{equation*}
%	M(\S{1} \ominus \S{2}) = M\S{1} \ominus M\S{2} .
%\end{equation*}
%\end{lemma}
%
Next, we exploit the superposition principle to inner approximate the union of time-point solutions over a time interval $\tau$:
\begin{theorem}[Time-interval EA backward reachable set] \label{thm:BRSEA_ti}
The union of time-point EA backward reachable sets
%\begin{align}
%\begin{split} \label{eq:BRSEA_tp_union}
%	&\bigcup_{t \in \tau_k} \BRSEA{-t} \\
%%	&\quad = \big \{ e^{-At_{k+1}} \big( ( e^{At} \targetset{} \ominus \ZW{t} ) \oplus -\ZU{t} \big) \, \big| \, t \in \tau_k \big \} % previously, but wrong, keep to retain idea(?)
%	&\quad = \big \{ e^{-At} \big( ( \targetset{} \ominus \ZW{t} ) \oplus -\ZU{t} \big) \, \big| \, t \in \tau_k \big \}
%\end{split}
%\end{align}
\begin{equation} \label{eq:BRSEA_tp_union}
	\bigcup_{t \in \tau_k} \BRSEA{-t}
	= \big \{ e^{-At} \big( ( \targetset{} \ominus \ZW{t} ) \oplus -\ZU{t} \big) \, \big| \, t \in \tau_k \big \}
\end{equation}
over $\tau_k = [t_k,t_{k+1}]$ can be inner approximated by
\begin{align}
\begin{split} \label{eq:innerBRSEA_ti}
	&\innerBRSEA{-\tau_k} = e^{-A t_{k+1}} \big( -\innerZU{t_k} \, \oplus \\
	&\hspace{5pt} \operator{conv} \big( \conZonoOp{ ((\targetset{} \ominus \F{} \, \boxOp{\targetset{}}) \ominus \B{\mu}) \ominus \outerZW{\tau_k} } , \\
	&\hspace{5pt} \conZonoOp{ ((e^{A \Delta t} \targetset{} \ominus \F{} \, \boxOp{\targetset{}}) \ominus \B{\mu}) \ominus \outerZW{\tau_k} } \big) \big) ,
\end{split}
\end{align}
where all variables are computed as introduced in \cref{ssec:FRS}.
% version without additional operations
%\begin{split} \label{eq:innerBRSEA_ti}
%	&\innerBRSEA{-\tau_k} \coloneqq e^{-A t_{k+1}} \big( -\innerZU{t_k} \, \oplus \\
%	&\hspace{25pt} \operator{conv} \big( \targetset{} \ominus \F{} \, \targetset{} \ominus  \B{\mu} \ominus \outerZW{t_{k+1}} , \\
%	&\hspace{25pt} e^{A \Delta t} \targetset{} \ominus \F{} \, \targetset{} \ominus  \B{\mu} \ominus \outerZW{t_{k+1}} \big) \big) .
%\end{split}
The union over all $\steps{}$ steps, that is,
\begin{equation*}
	\innerBRSEA{-\tau} = \bigcup_{k \in \{0,...,\steps{}-1\}} \innerBRSEA{-\tau_k} ,
\end{equation*}
is an inner approximation of the time-interval backward reachable set $\BRSEA{-\tau}$ in \eqref{eq:def_BRSEA_ti} over the time interval $\tau = [t_0,\tFinal{}]$.
\end{theorem}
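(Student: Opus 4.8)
The plan is to establish the single-step containment $\innerBRSEA{-\tau_k} \subseteq \bigcup_{t \in \tau_k} \BRSEA{-t}$ and then assemble the steps. I would start from the pointwise description \eqref{eq:BRSEA_tp_union} supplied by \cref{prop:BRSEA_tp}, namely $\BRSEA{-t} = e^{-At}((\targetset{} \ominus \ZW{t}) \oplus -\ZU{t})$, and factor the time-inverted flow at the \emph{end} of the step by writing $e^{-At} = e^{-At_{k+1}} e^{A(t_{k+1}-t)}$. Setting $s = t_{k+1}-t \in [0,\Delta t]$ pulls the common map $e^{-At_{k+1}}$ outside the union and leaves, inside, the family $e^{As}((\targetset{} \ominus \ZW{t}) \oplus -\ZU{t})$. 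The goal then reduces to showing that the bracketed set of \eqref{eq:innerBRSEA_ti}, i.e.\ $-\innerZU{t_k} \oplus \convOp{\mathcal{Q}_1,\mathcal{Q}_2}$ with $\mathcal{Q}_1,\mathcal{Q}_2$ the two convex-hull arguments, is contained in this parametrized union.

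The homogeneous part is the heart of the construction. As $s$ ranges over $[0,\Delta t]$, the sets $e^{As}\targetset{}$ trace out exactly the homogeneous time-interval solution $\Hti{\tau_k}$ with initial set $\Hti{t_k} = \targetset{}$ and $\Hti{t_{k+1}} = e^{A\Delta t}\targetset{}$. I would therefore invoke the inner enclosure \eqref{eq:innerHti}, using $\boxOp{\targetset{}}$ to furnish the zonotopic generator matrix for the error terms $\F{}\boxOp{\targetset{}}$ and $\B{\mu}$; since $\targetset{} \subseteq \boxOp{\targetset{}}$ only enlarges the subtracted error, the enclosure stays a valid inner approximation. This gives $\convOp{\targetset{},e^{A\Delta t}\targetset{}} \ominus (\F{}\boxOp{\targetset{}} \oplus \B{\mu}) \subseteq \bigcup_{s \in [0,\Delta t]} e^{As}\targetset{}$, so every point of the inner set lies in a single slice $e^{As^\ast}\targetset{}$, which fixes a reaching time $t^\ast = t_{k+1}-s^\ast \in \tau_k$. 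To fold in the disturbance I would collapse the nested Minkowski differences via $(\mathcal{X} \ominus \mathcal{A}) \ominus \mathcal{B} = \mathcal{X} \ominus (\mathcal{A} \oplus \mathcal{B})$, so that (using that the $\conZonoOp{\cdot}$ conversion is exact) $\mathcal{Q}_1 = \targetset{} \ominus D$ and $\mathcal{Q}_2 = e^{A\Delta t}\targetset{} \ominus D$ with $D = \F{}\boxOp{\targetset{}} \oplus \B{\mu} \oplus \outerZW{\tau_k}$, and then apply \cref{lmm:convMinkDiff} to pull the common term out of the hull: $\convOp{\mathcal{Q}_1,\mathcal{Q}_2} \subseteq \convOp{\targetset{},e^{A\Delta t}\targetset{}} \ominus D$. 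The extra factor $\ominus\, \outerZW{\tau_k}$ supplies the margin to absorb $\ZW{t^\ast}$ at the selected time, using that $\ZW{t^\ast}$ is enclosed by the time-interval solution $\outerZW{\tau_k}$ and that $\ominus$ is monotone in its first argument.

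With the homogeneous-plus-disturbance part placed in the target term of $\BRSEA{-t^\ast}$, it remains to account for the input. Since the input is existentially quantified, I would exhibit for each $u_0 \in \innerZU{t_k}$ an admissible control whose contribution realizes the summand $-\innerZU{t_k}$ at the reaching time, with $\innerZU{t_k} \subseteq \ZU{t_k}$ guaranteeing achievability at $t_k$ and the end-of-step map $e^{-At_{k+1}}$ transporting it consistently to $t^\ast$. Finally I would take the union over $k$ and note $\bigcup_k \innerBRSEA{-\tau_k} \subseteq \bigcup_k \bigcup_{t \in \tau_k} \BRSEA{-t} = \bigcup_{t \in \tau} \BRSEA{-t} \subseteq \BRSEA{-\tau}$, where the last inclusion is \cref{prop:union}.

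The hard part will be the input term. The homogeneous enclosure forces the reaching time $t^\ast = t_{k+1}-s^\ast$ to vary across the step, yet the formula commits to the fixed reference set $\innerZU{t_k}$ together with the fixed map $e^{-At_{k+1}}$. Making the single-step containment rigorous thus requires that $\innerZU{t_k}$, transported by the flow, be a valid inner bound for the input contribution $\ZU{t^\ast}$ simultaneously for every $s^\ast \in [0,\Delta t]$ --- concretely, a relation of the form $e^{-As^\ast}\innerZU{t_k} \subseteq \ZU{t^\ast}$. This is immediate at $s^\ast = 0$ (there $t^\ast = t_{k+1}$ and $\innerZU{t_k} \subseteq \ZU{t_k} \subseteq \ZU{t_{k+1}}$) but delicate as $s^\ast \to \Delta t$, and I expect it to hinge on a monotonicity property of the input particular solution under the propagation \eqref{eq:Zprop}. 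A secondary subtlety is guaranteeing that the disturbance margin $\outerZW{\tau_k}$ keeps the entire disturbed point, not only its center, inside the single slice $e^{As^\ast}\targetset{}$ picked out by the homogeneous enclosure.
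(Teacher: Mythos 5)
Your outline reproduces the paper's proof skeleton exactly: factor the flow as $e^{-At} = e^{-At_{k+1}}e^{A(t_{k+1}-t)}$ so that the homogeneous family $\{e^{-At}\targetset{} \mid t\in\tau_k\}$ becomes $e^{-At_{k+1}}\Hti{\tau_0}$, replace $\Hti{\tau_0}$ by the inner enclosure \eqref{eq:innerHti} evaluated with $\boxOp{\targetset{}}$, merge the nested Minkowski differences, distribute the common subtrahend into the convex hull via \cref{lmm:convMinkDiff}, and assemble the steps with \cref{prop:union}. The decisive difference is that the paper never runs your per-point, fixed-reaching-time argument. Its entire treatment of disturbance and input is one family-level substitution,
\begin{align*}
	\{ e^{-At} \ZW{t} \mid t \in \tau_k \} &\subseteq e^{-At_{k+1}} \outerZW{\tau_k} , \\
	\{ e^{-At} (-\ZU{t}) \mid t \in \tau_k \} &\supseteq e^{-At_{k+1}} (-\innerZU{t_k}) ,
\end{align*}
invoked as a property of the constructions \eqref{eq:outerZ}, \eqref{eq:innerZ}, \eqref{eq:Zprop}, after which the set algebra proceeds without ever selecting a $t^\ast$. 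The two issues you defer to the end of your proposal---the input inclusion holding uniformly over the step, and keeping the entire disturbed set inside a single slice---are therefore not peripheral subtleties: they are precisely the content of this substitution, i.e., of the paper's key step, and your proposal proves neither. As written, it is an outline of the right strategy with its central step missing.

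Moreover, the route you anticipate for closing the input gap cannot work. You call the case $s^\ast = 0$ ``immediate'' via $\innerZU{t_k} \subseteq \ZU{t_k} \subseteq \ZU{t_{k+1}}$, but the second inclusion is false in general: by \eqref{eq:Zprop}, $\ZU{t_{k+1}} = \ZU{t_k} \oplus e^{At_k}\ZU{\Delta t}$, so $\ZU{t_k} \subseteq \ZU{t_{k+1}}$ requires $\matzeros{} \in \ZU{\Delta t}$, which holds when $B\inputset{}$ contains the origin but not for a generic zonotopic input set. A scalar example makes this concrete: for $\dot{x} = x + u$ with $u \in [1,2]$ one gets $\ZU{t} = [\,e^{t}-1,\; 2(e^{t}-1)\,]$, and these intervals are not nested in $t$; the same computation shows $e^{-As^\ast}\innerZU{t_k} \not\subseteq \ZU{t^\ast}$ at both ends $s^\ast \in \{0, \Delta t\}$ of the step. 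Hence there is no ``monotonicity property of the propagation \eqref{eq:Zprop}'' to appeal to: any rigorous completion must either prove the family-level inclusions the paper invokes---which is where all of the difficulty is concentrated---or impose an additional hypothesis on $\inputset{}$ (such as the origin lying in $B\inputset{}$). Your pointwise plan, as stated, cannot be finished.
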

\begin{proof}
See Appendix.
\end{proof}

%% corollary required?
%The limit behavior of \eqref{eq:innerBRSEA_ti} is discussed next:
%%
%\begin{corollary} \label{cor:lim_innerBRSEA_ti}
%As the time step size goes to $0$, our inner approximation of the time-interval solution $\innerBRSEA{-\tau_k}$ converges to the inner approximation of the time-point solution $\innerBRSEA{-t_k}$, i.e.,
%\begin{equation*}
%	\lim_{\Delta t \to 0} \innerBRSEA{-\tau_k} = \innerBRSEA{-t_k} .
%\end{equation*}
%This limit is reached from below.
%\end{corollary}

\begin{algorithm}[!t]
	\caption{Time-interval EA backward reachable set} \label{alg:BRSEA_ti}
	\textbf{Require:} Linear system $\dot{x} = Ax + Bu + Ew$,
	target set~$\targetset{} = \polyHRep{\polymat{},\polyvec{}}$, input set~$\inputset{} = \zono{c_u,G_u}$,
	disturbance set~$\distset{} = \zono{c_w,G_w}$, time interval~$\tau{} = [t_0,\tFinal{}]$, steps~$\steps{} \in \N{}$
	
	\textbf{Ensure:} Inner approximation of the time-interval backward reachable set $\innerBRSEA{-\tau}$
	
	\setstretch{1.2}
	\begin{algorithmic}[1]
		\State $\Delta t \gets (\tFinal{}-t_0)/\steps{}$, $w \gets \centerOp{\distset{}}$, $\distsetzero{} \gets \zono{\matzeros{},G_w}$
		\label{alg:BRSEA_ti:time}
		\State pre-compute $\innerZU{t_0}$ and $\outerZWzero{t_0}$
		\label{alg:BRSEA_ti:initZUZW}
		\Comment{see~\eqref{eq:outerZ}, \eqref{eq:innerZ}, \eqref{eq:Zprop}}
		%
%		\State $\innerZU{t'_0} \gets \zono{\matzeros{},[\;]}$, $\innerZU{\Delta t} \gets $ Eq.~\eqref{eq:innerZ}
%		\label{alg:BRSEA_ti:initZU}
%		\State $\forall j \in \Nint{1}{\steps{}'}\colon \innerZU{t'_j} \gets$ Eq.~\eqref{eq:Zprop}
%		\label{alg:BRSEA_ti:innerZU_preprop}
%		\State $\outerZW{t'_0} \gets \zono{\matzeros{},[\;]}$, $\outerZW{\Delta t} \gets $ Eq.~\eqref{eq:outerZ}
%		\label{alg:BRSEA_ti:outerZW}
%		\State $\forall j \in \Nint{1}{\steps{}'}\colon \outerZW{t'_j} \gets$ Eq.~\eqref{eq:Zprop}
%		\label{alg:BRSEA_ti:outerZW_preprop}
		%
		\State $\mu \gets \sqrt{\gamma} \, \norm{(e^{A\Delta t} - I_n) G}_2$
		\label{alg:BRSEA_ti:mu}
		\textcolor{gray}{\Comment{$G$ and $\gamma$ from $\boxOp{\targetset{}}$}}
		\State $\poly{1} \gets (\targetset{} \ominus \F{} \, \boxOp{\targetset{}}) \ominus \B{\mu}$
		\label{alg:BRSEA_ti:P1}
		\textcolor{gray}{\Comment{see~\eqref{eq:F}, \eqref{eq:innerHti}}}
		\State $\poly{2} \gets (e^{A\Delta t} \targetset{} \ominus \F{} \, \boxOp{\targetset{}}) \ominus \B{\mu}$
		\label{alg:BRSEA_ti:P2}
		\textcolor{gray}{\Comment{see~\eqref{eq:F}, \eqref{eq:innerHti}}}
		\For{$k \gets 0$ to $\steps{}-1$} \label{alg:BRSEA_ti:loopstart}
			\State $t_{k+1} \gets t_k + \Delta t$, $\tau_k \gets [t_k,t_{k+1}]$
			\label{alg:BRSEA_ti:incrementtime}
			\State $\innerZU{t_{k+1}} \gets \innerZU{t_k} \oplus e^{At_k} \innerZU{\Delta t}$
			\label{alg:BRSEA_ti:innerZU_prop}
			\State $\outerZWzero{t_{k+1}} \gets \outerZWzero{t_k} \oplus e^{At_k} \outerZWzero{\Delta t}$
			\label{alg:BRSEA_ti:outerZWzero_prop}
			\State $\outerZtraj{w}{\tau_k} \gets$ Eq.~\eqref{eq:outerZtraj}, $\outerZW{\tau_k} \gets \outerZWzero{t_{k+1}} \oplus \outerZtraj{w}{\tau_k}$%
			\label{alg:BRSEA_ti:outerZW_prop}
			\State $\CZ{} \gets \convOp{ \big( \conZonoOp{\poly{1} \ominus \outerZW{\tau_k}}, \conZonoOp{\poly{2} \ominus \outerZW{\tau_k}} }$%
			\label{alg:BRSEA_ti:innerHti}
			\State $\innerBRSEA{-\tau_k} \gets e^{-A t_{k+1}} (\CZ{} \oplus -\innerZU{t_k})$
			\label{alg:BRSEA_ti:innerBRSEA}
		\EndFor \label{alg:BRSEA_ti:loopend}
		\State $\innerBRSEA{-\tau} = \bigcup_{k=0}^{\steps{}-1} \innerBRSEA{-\tau_k}$
		\label{alg:BRSEA_ti:union}
	\end{algorithmic}
\end{algorithm}

\cref{alg:BRSEA_ti} implements \cref{thm:BRSEA_ti}, where we explicitly consider the more general case of a time interval $\tau = [t_0,\tFinal{}]$ with $t_0 > 0$:
We pre-compute the particular solutions $\innerZU{t}$ and $\outerZW{t}$ until time $t_0$ in line~\ref{alg:BRSEA_ti:initZUZW} and pre-compute the polytopes $\poly{1}, \poly{2}$ (lines~\ref{alg:BRSEA_ti:P1}-\ref{alg:BRSEA_ti:P2}) that are used for inner approximating the time-interval homogeneous solution, see \eqref{eq:innerHti}.
The main loop computes all individual backward reachable sets $\innerBRSEA{-\tau_k}$ following \cref{thm:BRSEA_ti}, which implicitly represent the union (\cref{alg:BRSEA_ti:union}) that is the inner approximation of the time-interval EA backward reachable set $\innerBRSEA{-\tau}$.

\subsubsection{Runtime Complexity}
\label{sssec:innerBRSEA_ti_bigO}

\hspace{-2.5pt}\footnote{This subsection has been altered with respect to the published version.} Under \cref{ass:runtimecomplexity} and following \cref{tab:setops}, only the operation $\boxOp{\targetset{}}$ is $\bigO{n^{4.5}}$, as we can remove all other linear programs from \cref{alg:BRSEA_ti}, which occur in the exact conversion operation $\conZonoOp{\poly{}}$ in \cref{alg:BRSEA_ti:innerHti}.
According to \cite[Thm.~3]{Scott2016Automatica}, \cref{alg:conZonoOp} works with \emph{any} enclosure of $\poly{}$.
Hence, we can use the pre-computed set $\boxOp{\targetset{}}$ in all steps as
\begin{equation*}
	\forall t \in \tau, \forall i \in \{1,2\}\colon \poly{i} \ominus \outerZW{t} \subseteq \boxOp{\targetset{}} .
\end{equation*}
As a consequence, increasing the number of steps $\steps{}$ and thereby improving the tightness is only $\bigO{n^3}$.

\subsubsection{Approximation Error}
\label{sssec:innerBRSEA_ti_eps}

By \cref{prop:convergence}, the approximation error of the particular solutions $\outerZW{t_{k+1}}$ and $\innerZU{t_k}$ converges to $0$ as $\Delta t \to 0$.
Moreover, the sets $\poly{1}$ and $\poly{2}$ converge to $\targetset{}$ as $\lim_{\Delta t \to 0} \F{} = [\matzeros{},\matzeros{}]$ by \cite[Lemma~1]{Wetzlinger2023TAC} and $\lim_{\Delta t \to 0} \mu \overset{\eqref{eq:mu}}{=} 0$.
Consequently, the computed individual time-interval solutions $\innerBRSEA{-\tau_k}$ converge to the exact time-point solution $\BRSEA{-t_k}$ in the limit $\Delta t \to 0$.
However, a non-zero approximation error remains even in the limit as the union of time-point solutions is an inner approximation of the time-interval solution by \cref{prop:union}.

\subsubsection{Unperturbed Case}
\label{sssec:BRSEA_ti_noW}

As mentioned in \cref{ssec:maximal}, the unperturbed case is equivalent to computing the forward reachable set as defined in \cref{def:FRS} for the time-inverted dynamics $\dot{x}(t) = -Ax(t) - Bu(t)$.
For $\distset{} = \{\matzeros{}\}$, \cref{alg:BRSEA_ti} simplifies to computing an inner approximation of this forward reachable set in $\bigO{n^{4.5}}$.
Consequently, the approximation error converges to $0$ in the limit $\Delta t \to 0$ \cite[Thm.~1]{Wetzlinger2023TAC}.

\section{Numerical Examples}
\label{sec:numericalexamples}

%In this section, our backward reachability algorithms are evaluated on various examples.
% cut ---
%First, we compare the results with the state-of-the-art Hamilton-Jacobi reachability analysis on a 4D pursuit-evasion game (\cref{ssec:pursuitevasion}).
%Next, we examine the tightness of the maximal backward reachable set in a motion planning example (\cref{ssec:quadrotor}).
%Then, we compare the minimal backward reachable set with safe set computation (\cref{ssec:safeset}).
%Finally, we analyze the scalability of our algorithms on a platooning benchmark of variable dimension (\cref{ssec:scalability}).
% cut ---
We implemented our algorithms using the MATLAB toolbox CORA \cite{Althoff2015ARCH} for set-based computing and MOSEK\footnote{Available at \url{https://www.mosek.com}.} for solving linear programs. %\cite{Mosek2021}
All computations are carried out on a 2.60GHz six-core i7 processor with 32GB RAM.

\begin{table}[t]
	\centering \small
	\caption{Results of \cref{ssec:pursuitevasion,ssec:groundcollision,ssec:terminalset}.}
	\label{tab:allresults}
	\begin{tabular}{l l l}
		\toprule
		\textbf{Benchmark} & \textbf{Algorithm} & \textbf{Time} \\ \midrule
		\multirow{3}{*}{\cref{ssec:pursuitevasion}: $\outerBRSAE{-\tau}$} & Alg.\ 2 ($\steps = 100$) & $0.11\si{\second}$ \\
		& HJ ($\grid{} = 15$) & $2.4\si{\second}$ \\
		& HJ ($\grid{} = 35$) & $197\si{\second}$ \\ \midrule
		\multirow{3}{*}{\cref{ssec:pursuitevasion}: $\innerBRSEA{-\tau}$} & Alg.\ 3 ($\steps = 100$) & $0.12\si{\second}$ \\
		& HJ ($\grid{} = 15$) & $2.4\si{\second}$ \\
		& HJ ($\grid{} = 35$) & $194\si{\second}$ \\ \midrule
		\cref{ssec:groundcollision}: $\outerBRSAEsuper{-\tau}{1,2,3}$ & Alg.\ 2 ($\steps = 200$) & $2.4\si{\second}$ \\ \midrule
		\cref{ssec:terminalset}: $\innerBRSEAsuper{-\tau}{1,2,3}$ & Alg.\ 3 ($\steps = 1000$) & $6.3\si{\second}$ \\
		\bottomrule
	\end{tabular}
\end{table}

%% list of changes for evaluation:

% - pursuit-evasion game
% we have different U and W
% re-run Alg. 2/3 and HJ -> update times, re-draw plots

% - ground collision avoidance
% find different Xend (keep U and W, as they are taken from reference)
% re-run Alg. 2, re-draw plots
% add trajectories

% - terminal set reachability
% find different U, W, but keep Xend
% re-run Alg. 3, re-draw plots
% add trajectories

% - scalability
% find different Xend, keep U (makes no sense that dims depend on each other), W (makes no difference)

% rebuttal argumentation:
% 1. HJ only supports boxes
% 2. changed Xend (now actual polytope), U and W from reference
% 3. Xend must be kept, changed U and W (now actual zonotopes)
% 4. changed Xend (now actual polytope), but U and W must be kept for logical reasons

% R1.3: HJ only boxes, ground collision from reference, different U and W for terminal set reachability
% ... give platoon as example why intervals are common
% R10.7: changed Xend in ground-collision and platoon to polytopes, HJ only boxes, terminal set from other algorithm

\subsection{Pursuit-Evasion Game}
\label{ssec:pursuitevasion}

First, we compare the results with the Python implementation\footnote{Available at \href{https://github.com/StanfordASL/hj_reachability}{https://github.com/StanfordASL/hj\_reachability}.} of the state-of-the-art Hamilton-Jacobi reachability analysis \cite{Chen2018ANNUREV} on a 4D pursuit-evasion game defined by the double integrator dynamics \cite[Eq.~(24)]{Chen2015CDC}
\begin{equation*} %\label{eq:pursuitevasion}
	A =
	\begin{bmatrix}
		0 & 1 & 0 & 0 \\
		0 & 0 & 0 & 0 \\
		0 & 0 & 0 & 1 \\
		0 & 0 & 0 & 0
	\end{bmatrix}, \;
	B =
	\begin{bmatrix}
		0 & 0 \\ 1 & 0 \\ 0 & 0 \\ 0 & 1
	\end{bmatrix}, \;
	E =
	\begin{bmatrix}
		0 & 0 \\ -1 & 0 \\ 0 & 0 \\ 0 & -1
	\end{bmatrix}.
\end{equation*}
%\begin{equation*}
%	\dot{x}(t) =
%	\begin{bmatrix}
%		0 & 1 & 0 & 0 \\
%		0 & 0 & 0 & 0 \\
%		0 & 0 & 0 & 1 \\
%		0 & 0 & 0 & 0
%	\end{bmatrix}
%	x(t) +
%	\begin{bmatrix}
%		0 & 0 \\ 1 & 0 \\ 0 & 0 \\ 0 & 1
%	\end{bmatrix}
%	u(t) +
%	\begin{bmatrix}
%		0 & 0 \\ -1 & 0 \\ 0 & 0 \\ 0 & -1
%	\end{bmatrix}
%	w(t) .
%\end{equation*}
%
The state is comprised of the relative positions and velocities in the horizontal and vertical plane, while the control inputs and disturbances represent the corresponding accelerations of Player 1 and Player 2, respectively.
We choose
\begin{align*}
	\targetset{} &= [-0.5,0.5] \times \dotsc \times [-0.5,0.5] \subset \R{4} \\
	\inputset{} &= [-0.5,0.1] \times [-0.1,0.5] \subset \R{2} \\
	\distset{} &= [-0.1,0.5] \times [-0.5,0.1] \subset \R{2}
\end{align*}
where $\targetset{}$ defines a collision between the players, and $\inputset{}$ and $\distset{}$ are chosen such that each player has different steering capacities.
% ...target set as a zonotope: \zono{\matzeros{},0.5 I_4}
Furthermore, we set the time horizon to $\tau = [0,1]$.
% For the computation of the minimal time-interval backward reachable set, we use
%
% \begin{equation*}
% 	\inputset{}_{\min} = \zono[\bigg]{\begin{bmatrix} 0 \\ -\tfrac{1}{8} \end{bmatrix}, \begin{bmatrix} \tfrac{1}{4} & 0 \\ 0 & \tfrac{1}{8} \end{bmatrix}}, \;
% 	\distset{}_{\min} = \zono[\bigg]{\begin{bmatrix} \tfrac{1}{4} \\ 0 \end{bmatrix}, \begin{bmatrix} \tfrac{1}{4} & 0 \\ 0 & \tfrac{1}{2} \end{bmatrix}}
% \end{equation*}
%
%\begin{align*}
%	\inputset{}_{\min} &= [-0.25, 0.25] \times [-0.25, 0] \\
%	\distset{}_{\min} &= [0, 0.5] \times [-0.5, 0.5]
%\end{align*}
% while the maximal backward reachable set is computed for
% \begin{equation*}
% 	\inputset{}_{\max} = \zono[\bigg]{\begin{bmatrix} 0 \\ \tfrac{1}{4} \end{bmatrix}, \begin{bmatrix} \tfrac{1}{5} & 0 \\ 0 & \tfrac{1}{2} \end{bmatrix}}, \;
% 	\distset{}_{\max} = \zono[\bigg]{\begin{bmatrix} \tfrac{1}{10} \\ 0 \end{bmatrix}, \begin{bmatrix} \tfrac{1}{10} & 0 \\ 0 & \tfrac{1}{10} \end{bmatrix}}
% \end{equation*}
%
%\begin{align*}
%	\inputset{}_{\max} &= [-0.2, 0.2] \times [-0.25, 0.75] \\
%	\distset{}_{\max} &= [0, 0.2] \times [-0.1, 0.1]
%\end{align*}
%
% giving different steering capacities to each player.

\begin{figure}
	\definecolor{mycolor1}{rgb}{0.00000,0.36078,0.67059}%
\begin{tikzpicture}
\pgfplotsset{
plotstyle1/.style={area legend, color=mycolor1, line width=1.25pt},
plotstyle2/.style={area legend, draw=black},
plotstyleHJ15/.style={color=goodRed, only marks, mark size=0.5pt, mark=*, mark options={solid, goodRed}, forget plot},
plotstyleHJ35/.style={color=goodYellow, only marks, mark size=0.5pt, mark=*, mark options={solid, goodYellow}, forget plot},
every tick label/.append style={font=\scriptsize},
/.style={}
}
\def\rows{1}
\def\cols{2}
\def\horzsep{1.25cm}
\def\basepath{figures/}

\begin{groupplot}[%
group style={rows = \rows, columns = \cols, horizontal sep = \horzsep},
scale only axis,
width=1/\cols*0.5*\textwidth -\horzsep,
xlabel style={at={(0.5,-0.1)}},
ylabel style={at={(-0.1,0.5)}},
legend style={legend columns=2,legend to name=legendname,font=\scriptsize,legend cell align=left,/tikz/every even column/.append style={column sep=0.5cm}}
]
\nextgroupplot[xmin=-1.2500,xmax=1.2500,ymin=-1.2500,ymax=1.2500,xlabel={$x_1$},ylabel={$x_2$}]
\input{\basepath pursuitevasion_minimal_11.tikz}
\coordinate (top) at (rel axis cs:0,1);
\nextgroupplot[xmin=-1.2500,xmax=1.2500,ymin=-1.2500,ymax=1.2500,xlabel={$x_3$},ylabel={$x_4$}]
\input{\basepath pursuitevasion_minimal_legends.tikz}
\input{\basepath pursuitevasion_minimal_12.tikz}
\coordinate (bot) at (rel axis cs:1,0);
\end{groupplot}
\path (top|-current bounding box.south)--coordinate(legendpos)(bot|-current bounding box.south);
\node at([yshift=-3ex,xshift=-1ex]legendpos) {\pgfplotslegendfromname{legendname}};

\end{tikzpicture}%
	\caption{Projections of the time-interval AE backward reachable set for the pursuit-evasion game in \cref{ssec:pursuitevasion}.}
	\label{fig:pursuitevasion_minimal}
\end{figure}

\begin{figure}
	\definecolor{mycolor1}{rgb}{0.00000,0.36078,0.67059}%
\begin{tikzpicture}
\pgfplotsset{
plotstyle1/.style={area legend, color=mycolor1, line width=1.25pt},
plotstyle2/.style={area legend, draw=black},
plotstyleHJ15/.style={color=goodRed, only marks, mark size=0.5pt, mark=*, mark options={solid, goodRed}, forget plot},
plotstyleHJ35/.style={color=goodYellow, only marks, mark size=0.5pt, mark=*, mark options={solid, goodYellow}, forget plot},
every tick label/.append style={font=\scriptsize},
/.style={}
}
\def\rows{1}
\def\cols{2}
\def\horzsep{1.25cm}
\def\basepath{figures/}

\begin{groupplot}[%
group style={rows = \rows, columns = \cols, horizontal sep = \horzsep},
scale only axis,
width=1/\cols*0.5*\textwidth -\horzsep,
xlabel style={at={(0.5,-0.1)}},
ylabel style={at={(-0.1,0.5)}},
legend style={legend columns=2,legend to name=legendname,font=\scriptsize,legend cell align=left,/tikz/every even column/.append style={column sep=0.5cm}}
]
\nextgroupplot[xmin=-1.2500,xmax=1.2500,ymin=-1.2500,ymax=1.2500,xlabel={$x_1$},ylabel={$x_2$}]
\input{\basepath pursuitevasion_maximal_11.tikz}
\coordinate (top) at (rel axis cs:0,1);
\nextgroupplot[xmin=-1.2500,xmax=1.2500,ymin=-1.2500,ymax=1.2500,xlabel={$x_3$},ylabel={$x_4$}]
\input{\basepath pursuitevasion_maximal_legends.tikz}
\input{\basepath pursuitevasion_maximal_12.tikz}
\coordinate (bot) at (rel axis cs:1,0);
\end{groupplot}
\path (top|-current bounding box.south)--coordinate(legendpos)(bot|-current bounding box.south);
\node at([yshift=-3ex,xshift=-1ex]legendpos) {\pgfplotslegendfromname{legendname}};

\end{tikzpicture}%
	\caption{Projections of the time-interval EA backward reachable set for the pursuit-evasion game in \cref{ssec:pursuitevasion}.}
	\label{fig:pursuitevasion_maximal}
\end{figure}

% The grid for the HJ reachability covers the plotted  and consists of $\grid{}$ grid points in each dimension.
% Recall that we compute an inner approximation of the EA backward reachable set $\innerBRSEA{-\tau}$ and an outer approximation of the AE backward reachable set $\outerBRSAE{-\tau}$:
% We examine both minimal and maximal reachability:
% In the minimal case, we look for all initial states from which Player 1 cannot avoid collision with Player 2, whereas the maximal case finds all states enabling Player 1 to catch Player 2.

\cref{fig:pursuitevasion_minimal,fig:pursuitevasion_maximal} show projections of the AE and EA backward reachable sets $\outerBRSAE{-\tau}$ and $\innerBRSEA{-\tau}$, respectively, computed by \cref{alg:BRSAE_ti,alg:BRSEA_ti}.
For comparison, we also plot the value function obtained by HJ reachability using $\grid{} \in \{15,35\}$ grid points per dimension over a domain of $[-1.5,1.5]$.
Note that we plot only the grid points with a negative value function to represent $\innerBRSEA{-\tau}$; for $\outerBRSAE{-\tau}$, we plot all grid points with a negative value function evaluation as well as their neighbors in all directions (also diagonally) with nonnegative values.
The plotted grid points indicate that the outer approximation $\outerBRSAE{-\tau}$ tightens with finer sampling, while the inner approximation $\innerBRSEA{-\tau}$ widens.
% A finer sampling with $\grid{} = 55$ results in computation times of over ten minutes due to the exponential increase in the total number of grid points.

Our proposed algorithms yield similar\footnote{Slight deviations originate in part from the differences between \cref{def:BRSAE,def:BRSEA} and the definitions used by HJ reachability, as discussed in \cref{fn:defs} on \Cpageref{fn:defs}.} results compared to HJ reachability.
While the runtime complexity of our proposed algorithms only scales linearly with the number of time steps, the computation time of HJ reachability strongly depends on the partitioning on the grid, see \cref{tab:allresults}, as it suffers from the curse of dimensionality.
Furthermore, the grid must cover the domain of the backward reachable set, which ultimately requires knowledge about the solution before computing it.
This is not the case for our proposed backward reachability algorithms.

\subsection{Ground Collision Avoidance}
\label{ssec:groundcollision}

\begin{figure*}
	\definecolor{mycolor1}{rgb}{0.00000,0.36078,0.67059}%
\definecolor{mycolor2}{rgb}{0.89020,0.10588,0.13725}%
\definecolor{mycolor3}{rgb}{1.00000,0.76471,0.14510}%
\begin{tikzpicture}
\pgfplotsset{
plotstyle1/.style={area legend, color=mycolor1, line width=1.2pt},
plotstyle2/.style={area legend, color=mycolor2, dashed, line width=1.2pt},
plotstyle3/.style={area legend, color=mycolor3, dashdotted, line width=1.2pt},
plotstyle4/.style={area legend, draw=black},
%plotstyle5/.style={area legend, color=green, dashed, line width=1.2pt},
every tick label/.append style={font=\scriptsize},
/.style={}
}
\def\rows{1}
\def\cols{3}
\def\horzsep{1.5cm}
\def\basepath{figures/}
\def\subfigheight{3.5cm}

\begin{groupplot}[%
group style={rows = \rows, columns = \cols, horizontal sep = \horzsep},
scale only axis,
width=1/\cols*\textwidth -\horzsep,
legend style={legend columns=4,legend to name=legendname,font=\scriptsize,legend cell align=left,/tikz/every even column/.append style={column sep=0.5cm}}
]
\nextgroupplot[xmin=-2.75,xmax=2.75,ymin=-22,ymax=22,height=\subfigheight,xlabel={$x_1$},ylabel={$x_3$},ylabel style={at={(-0.125,0.5)}}]
\input{\basepath groundcollision_new_11.tikz}
\coordinate (top) at (rel axis cs:0,1);
\nextgroupplot[xmin=-0.25,xmax=2,ymin=-6,ymax=0.5,height=\subfigheight,xlabel={$x_2$},ylabel={$x_4$}]
\input{\basepath groundcollision_new_12.tikz}
\nextgroupplot[xmin=-3,xmax=3,ymin=-0.20,ymax=2.00,height=\subfigheight,xlabel={$x_1$},ylabel={$x_2$},
	yticklabel style={/pgf/number format/fixed,/pgf/number format/precision=2}]
\input{\basepath groundcollision_new_legends.tikz}
\input{\basepath groundcollision_new_13.tikz}
\coordinate (bot) at (rel axis cs:1,0);
\end{groupplot}
\path (top|-current bounding box.south)--coordinate(legendpos)(bot|-current bounding box.south);
\node at([yshift=-3ex]legendpos) {\pgfplotslegendfromname{legendname}};

\end{tikzpicture}%
	\caption{Projections of the time-interval AE backward reachable set for the ground collision avoidance scenario in \cref{ssec:groundcollision}.}
	\label{fig:groundcollision}
\end{figure*}

Next, we examine the computation of the AE backward reachable set
% by analyzing the effects of altering the input or disturbance capacities.
using a linearized longitudinal model of a quadrotor \cite[Eq.~(42)]{Mitchell2019arXiv}
\begingroup\setlength{\arraycolsep}{3pt}
\begin{equation*}
	A =
	\begin{bmatrix}
		0 & 0 & 1 & 0 & 0 & 0 \\
		0 & 0 & 0 & 1 & 0 & 0 \\
		0 & 0 & 0 & 0 & g & 0 \\
		0 & 0 & 0 & 0 & 0 & 0 \\
		0 & 0 & 0 & 0 & 0 & 1 \\
		0 & 0 & 0 & 0 & -d_0 & -d_1
	\end{bmatrix},
	B = 
	\begin{bmatrix}
		0 & 0 \\
		0 & 0 \\
		0 & 0 \\
		K & 0 \\
		0 & 0 \\
		0 & n_0
	\end{bmatrix},
	E = 
	\begin{bmatrix}
		0 & 0 \\
		0 & 0 \\
		1 & 0 \\
		0 & 1 \\
		0 & 0 \\
		0 & 0
	\end{bmatrix}
\end{equation*}
\endgroup
with $g = 9.81, d_0 = 70, d_1 = 17, K = 0.89/1.4$, and $n_0 = 55$.
In order, the states represent the horizontal position, vertical position, horizontal velocity, vertical velocity, roll, and roll velocity.
For our ground collision avoidance scenario, we want to avoid any state $x_2 \leq 0.1$ with a negative velocity $x_4 \leq 0$.
Inspired by \cite[Sec.~6.1]{Mitchell2019arXiv}, we define the target set $\targetset{} = \polyHRep{\polymat{},\polyvec{}} \subset \R{6}$ with
% {\setlength{\arraycolsep}{2pt}
% \begin{equation*}
% 	\targetset{} = \zono[\big]{\begin{bmatrix} 0 & \tfrac{1}{20} & 0 & -\tfrac{1}{2} & 0 & 0 \end{bmatrix}^\top\!\!, \diag \begin{bmatrix} \tfrac{1}{2} & \tfrac{1}{20} & 1 & \tfrac{1}{2} & \tfrac{\pi}{15} & \tfrac{\pi}{2} \end{bmatrix}} \subset \R{6} .
% \end{equation*}}%
%
\begingroup
\setlength{\arraycolsep}{2pt}
\setcounter{MaxMatrixCols}{20}
\begin{align*}
	\polymat{}^\top &= 
	\begin{bmatrix}
		1 & -1 			 & 0 & \phantom{-}0 &\phantom{-}1  & -1 		  & 0 & \phantom{-}0 & 0 & \phantom{-}0 & \phantom{-}0  & 0 & \phantom{-}0  & 0 &  \phantom{-}0 \\
		0 & \phantom{-}0 & 1 & -1 			& -2 		   & -2 		  & 0 & \phantom{-}0 & 0 & \phantom{-}0 & \phantom{-}10 & 0 & \phantom{-}0  & 0 &  \phantom{-}0 \\
		0 & \phantom{-}0 & 0 & \phantom{-}0 & \phantom{-}0 & \phantom{-}0 & 1 & -1 			 & 0 & \phantom{-}0 & \phantom{-}0  & 0 & \phantom{-}0  & 0 &  \phantom{-}0 \\
		0 & \phantom{-}0 & 0 & \phantom{-}0 & \phantom{-}0 & \phantom{-}0 & 0 & \phantom{-}0 & 1 & -1 			& -1			& 0 & \phantom{-}0  & 0 &  \phantom{-}0 \\
		0 & \phantom{-}0 & 0 & \phantom{-}0 & \phantom{-}0 & \phantom{-}0 & 0 & \phantom{-}0 & 0 & \phantom{-}0 & \phantom{-}0  & 1 & -1 			& 0 &  \phantom{-}0 \\
		0 & \phantom{-}0 & 0 & \phantom{-}0 & \phantom{-}0 & \phantom{-}0 & 0 & \phantom{-}0 & 0 & \phantom{-}0 & \phantom{-}0  & 0 & \phantom{-}0  & 1 & -1  
	\end{bmatrix}, \\
	\polyvec{}^\top &=
	\begin{bmatrix}
		  \tfrac{1}{2}		% x_1 max
		& \tfrac{1}{2}		% x_1 min
		& \tfrac{1}{10}		% x_2 max
		& 0					% x_2 min
		& \tfrac{3}{10}
		& \tfrac{3}{10}
		& 1					% x_3 max
		& 1					% x_3 min
		& 0					% x_4 max
		& 1					% x_4 min
		& 1
		& \tfrac{\pi}{15}	% x_5 max
		& \tfrac{\pi}{15}	% x_5 min
		& \tfrac{\pi}{2}	% x_6 max
		& \tfrac{\pi}{2}	% x_6 min
	\end{bmatrix} .
\end{align*}
\endgroup%
The control inputs are the total normalized thrust and the desired roll angle, while the disturbances capture linearization errors.
Inspired by \cite[Eq.~(45)]{Mitchell2019arXiv}, we bound these values by
{\setlength{\arraycolsep}{2pt}
\begin{align*}
	\inputset{} &= \zono[\big]{\begin{bmatrix} \tfrac{g}{K} & 0 \end{bmatrix}^\top\!\!, \diag \begin{bmatrix} \zeta \tfrac{3}{2} & \tfrac{\pi}{6} \end{bmatrix}} \subset \R{2} , \\
	\distset{} &= \zono[\big]{\begin{bmatrix} 0 & 0 \end{bmatrix}^\top\!\!, \diag \begin{bmatrix} 0.2760 \varphi & 0.3668 \end{bmatrix}} \subset \R{2} ,
\end{align*}}%
%\begin{align*}
%	\inputset{} &= \big( [-1.5\zeta,1.5\zeta] + \tfrac{g}{K} \big) \times [-\tfrac{\pi}{6},\tfrac{\pi}{6}] , \\
%	\distset{} &= [-0.2760 \varphi, 0.2760 \varphi] \times [-0.3668,0.3668] ,
%\end{align*}
%
where the scaling factors $\zeta \in \R{}$ and $\varphi \in \R{}$ allow us to design cases with different input and disturbance capacities, for which we use the following pairs:
$\zeta^{(1)} = 1$ and $\varphi^{(1)} = 10$,
$\zeta^{(2)} = 1$ and $\varphi^{(2)} = 1$, and
$\zeta^{(3)} = 2$ and $\varphi^{(3)} = 1$.
We set $\tau = [0,0.5]$.
% \begin{itemize}
% 	\setlength\itemsep{0.1cm}
% 	\item $\outerBRSAEsuper{-\tau}{1}$: $\zeta^{(1)} = 1$, $\varphi^{(1)} = 10$
% 	\item $\outerBRSAEsuper{-\tau}{2}$: $\zeta^{(2)} = 1$, $\varphi^{(2)} = 1$
% 	\item $\outerBRSAEsuper{-\tau}{3}$: $\zeta^{(3)} = 2$, $\varphi^{(3)} = 1$
% \end{itemize}

\cref{fig:groundcollision} shows the time-interval AE backward reachable sets $\outerBRSAE{-\tau}$ corresponding to the different values of $\zeta$ and $\varphi$, with the computation times in \cref{tab:allresults}.
As expected, the projections show that $\outerBRSAEsuper{-\tau}{1} \supset \outerBRSAEsuper{-\tau}{2} \supset \outerBRSAEsuper{-\tau}{3}$ since the input capacity increases, as $\zeta^{(1)} \leq \zeta^{(2)} \leq \zeta^{(3)}$, and the disturbance capacity decreases, as $\varphi^{(1)} \geq \varphi^{(2)} \geq \varphi^{(3)}$.
In the leftmost projection, we see that $\outerBRSAEsuper{-\tau}{1}$ extends furthest in $\pm x_1$ and $\pm x_3$ because the disturbance $w_1$ is larger than in the other cases and forces more states to enter the target set.
The projections of $\outerBRSAEsuper{-\tau}{2}$ and $\outerBRSAEsuper{-\tau}{3}$ are identical because the input $u_1$ neither directly nor indirectly influences these dimensions.
As indicated by the middle and rightmost plots, an increase of the input capacity of $u_1$ for $\outerBRSAEsuper{-\tau}{3}$ allows more states to avoid the target set $\targetset{}$ in comparison to $\outerBRSAEsuper{-\tau}{2}$, which is affected by the same disturbance set.
Moreover, the middle plot shows that all states with positive vertical velocity $x_4$ can avoid the target set.
%In contrast, the projection onto the axes $x_2$-$x_4$ is equal for $\outerBRSAEsuper{-\tau}{1}$ and $\outerBRSAEsuper{-\tau}{1}$, as these dimensoins are only affected by the disturbance $w_2$ that is equal in both cases.
%In the third case, the doubled input capacity allows more states to avoid the target set $\targetset{}$, which decreases the extension of $\outerBRSAEsuper{-\tau}{3}$ along all dimensions that are affected by the control input $u_1$.

% In summary, this example nicely demonstrates the effect of different input/disturbance capacities on the size of the AE backward reachable set.

\subsection{Terminal Set Reachability}
\label{ssec:terminalset}

In this subsection, we analyze the computation of the EA backward reachable set using a $12$-dimensional quadrotor system linearized about the hover condition \cite[Sec.~2]{Kaynama2014ARCH}.
The state matrix $A \in \R{12 \times 12}$ and the input matrix $B \in \R{12 \times 4}$ are provided by \cite[Appendix~A]{Kaynama2014ARCH}, while the disturbance matrix $E \in \R{12 \times 3}$ is all-zero except for $E_{(4,1)}=E_{(5,2)}=E_{(6,3)}=1$ as in \cite[Sec.~V-D]{Gruber2023TAC}.
To highlight the relation of maximal backward reachability with controller synthesis, we choose a \emph{safe terminal set} \cite[Sec.~IV-A]{Gruber2021CSL} as our target set:
For each state in the safe terminal set, there exists a stabilizing controller keeping the state in the safe terminal set at the next time step and, by induction, for all times.
Our EA backward reachable set contains all states that can be steered into the safe terminal set despite worst-case disturbances.

\begin{figure*}
	\definecolor{mycolor1}{rgb}{0.00000,0.36078,0.67059}%
\definecolor{mycolor2}{rgb}{0.89020,0.10588,0.13725}%
\definecolor{mycolor3}{rgb}{1.00000,0.76471,0.14510}%
\begin{tikzpicture}
\pgfplotsset{
plotstyle1/.style={area legend, color=mycolor1, line width=1pt},
plotstyle2/.style={area legend, color=mycolor2, dashed, line width=1pt},
plotstyle3/.style={area legend, color=mycolor3, dashdotted, line width=1pt},
plotstyle4/.style={area legend, draw=black, fill=white},
every tick label/.append style={font=\scriptsize},
/.style={}
}
\def\rows{1}
\def\cols{4}
\def\horzsep{1.25cm}
\def\vertsep{1.15cm}
\def\basepath{figures/}
\def\subfigheight{3.5cm}

\begin{groupplot}[%
group style={rows = \rows, columns = \cols, horizontal sep = \horzsep, vertical sep = \vertsep},
scale only axis,
ylabel style={at={(-0.1,0.5)}},
width=1/\cols*\textwidth - \horzsep,
legend style={legend columns=4,legend to name=legendname, font=\scriptsize, legend cell align=left,/tikz/every even column/.append style={column sep=0.5cm}}
]
\nextgroupplot[xmin=-0.85,xmax=0.85,ymin=-2.250,ymax=2.250,height=\subfigheight,xlabel={$x_1$},ylabel={$x_4$}]
\input{\basepath terminalSet_11.tikz}
\coordinate (top) at (rel axis cs:-0.05,1);
\nextgroupplot[xmin=-0.850,xmax=0.850,ymin=-2.250,ymax=2.250,height=\subfigheight,xlabel={$x_2$},ylabel={$x_5$}]
\input{\basepath terminalSet_21.tikz}
\nextgroupplot[xmin=-6.000,xmax=2.000,ymin=-5.000,ymax=11.000,height=\subfigheight,xlabel={$x_3$},ylabel={$x_6$}]
\input{\basepath terminalSet_12.tikz}
\nextgroupplot[xmin=-0.60,xmax=0.60,ymin=-1.10,ymax=1.10,height=\subfigheight,xlabel={$x_9$},ylabel={$x_{12}$}]
\input{\basepath terminalSet_legends.tikz}
\input{\basepath terminalSet_22.tikz}
\coordinate (bot) at (rel axis cs:1,0);
\end{groupplot}
\path (top|-current bounding box.south)--coordinate(legendpos)(bot|-current bounding box.south);
\node at([yshift=-3ex]legendpos) {\pgfplotslegendfromname{legendname}};

\end{tikzpicture}%
	\caption{Projections of the time-interval EA backward reachable set for the quadrotor system in \cref{ssec:terminalset}.}
	\label{fig:terminalset}
\end{figure*}

Using the approach in \cite{Gruber2021CSL} implemented in the MATLAB toolbox AROC \cite{AROC}, we obtain the safe terminal set $\zono{\matzeros{},G}$ whose generator matrix $G$ (see \cref{fig:safeterminalset} in the Appendix) is square and full-rank.
Hence, the set $\zono{\matzeros{},G}$ is a parallelotope and can be easily converted into a polytope $\targetset{}\subset \R{12}$, as required by \cref{alg:BRSEA_ti}.
We use the generator matrix
\begingroup\setlength{\arraycolsep}{4pt}
\begin{equation*}
	G = \tfrac{1}{5}
	\begin{bmatrix}
	1 & 0 & 0 & 1 & -1 		     & 1 & -1 			& 0 & \phantom{-}0 \\
	0 & 1 & 0 & 1 & \phantom{-}1 & 0 & \phantom{-}0 & 1 & -1 \\
	0 & 0 & 1 & 0 & \phantom{-}0 & 1 & \phantom{-}1 & 1 & \phantom{-}1 \\
	\end{bmatrix}
\end{equation*}
\endgroup
to define the input set and disturbance set as \cite[Sec.~V-D]{Gruber2023TAC}
\begin{equation*}
	\inputset{} = [-9.81, 2.38] \times \zono{\matzeros{}, \zeta G} \subset \R{4}, \;
	\distset{} = \zono{\matzeros{}, \varphi G} \subset \R{3} ,
\end{equation*}
where the scaling factors $\zeta \in \R{}$ and $\varphi \in \R{}$ allow us to compare the results for different input and disturbance capacities:
$\zeta^{(1)} = 0.5$ and $\varphi^{(1)} = 0$,
$\zeta^{(2)} = 1$ and $\varphi^{(2)} = 0$, and
$\zeta^{(3)} = 1$ and $\varphi^{(3)} = 0.05$.
We set $\tau = [0,1]$.

\cref{fig:terminalset} shows various projections of the time-interval EA backward reachable set $\innerBRSEA{-\tau}$ corresponding to the different values of $\zeta$ and $\varphi$, with the computation times in \cref{tab:allresults}.
%The backward reachable sets are symmetric with respect to the origin along some dimensions due to the symmetry of the input set and the disturbance set---except for $u_{(1)} \in [-9.81,2.38]$, which becomes apparent in the projection onto the $x_3$-$x_6$ axes.
We observe that $\innerBRSEAsuper{-\tau}{2} \supseteq \innerBRSEAsuper{-\tau}{1}$, which is due to the enlarged input capacity in the second case, showing that more input capacity can steer additional states into the target set, thereby enlarging the EA backward reachable set.
Similarly, we have $\innerBRSEAsuper{-\tau}{3} \subseteq \innerBRSEAsuper{-\tau}{2}$, as the third case incorporates disturbances.
Since the input capacities are equal in both cases, we observe that enlarging the disturbance shrinks the size of the EA backward reachable set.

% In the first two projections from the left, we see that $\innerBRSEAsuper{-\tau}{2}$ encloses $\innerBRSEAsuper{-\tau}{1}$ since the contributing part of the input set $\inputset{}$ is larger, as $\zeta^{(1)}_{(1)} > \zeta^{(2)}_{(1)}, \zeta^{(1)}_{(2)} > \zeta^{(2)}_{(2)}$, and the disturbance set $\distset{}$ is equal.
% However, we have $\zeta^{(1)}_{(3)} > \zeta^{(2)}_{(3)}$ so that $\innerBRSEAsuper{-\tau}{1}$ encloses $\innerBRSEAsuper{-\tau}{2}$ in the rightmost projection.
%Please also note that the projection onto the $x_3$-$x_6$ axis looks the same for $\innerBRSEAsuper{-\tau}{1}$ and $\innerBRSEAsuper{-\tau}{2}$ since these dimensions are only affected by the first input $u_{(1)}$, which is the same in both cases.

% The computation of $\innerBRSEAsuper{-\tau}{3}$ takes a non-zero disturbance into account, but also increases the input capacity compared to $\innerBRSEAsuper{-\tau}{2}$:
% The first three projections show a much smaller backward reachable set, as the additional input capacity is outweighed by the disturbance, while the rightmost projection shows a slightly rotated backward reachable set compared to $\innerBRSEAsuper{-\tau}{1}$ caused by the disturbance $\varphi^{(3)}_{(3)}$.
% In summary, this example demostrates well how different input/disturbance capacities affect the size of the EA backward reachable set.

\subsection{Scalability Analysis}
\label{ssec:scalability}

{\setlength{\arraycolsep}{2pt}
Finally, we analyze the scalability of our backward reachability algorithms by means of the scalable platoon benchmark \cite{BenMakhlouf2014ARCH2}, whose dynamics are given in \cite[Eq.~(9)]{BenMakhlouf2014ARCH2}, where we choose $\gamma = 2$ as in \cite[Sec.~2.4]{BenMakhlouf2014ARCH2}.
For a number of trucks $\numTrucks{} \in \N{}$, the state vector is $x(t) = \begin{bmatrix} x^{(1)}(t)^\top & \dotsc & x^{(\numTrucks{})}(t)^\top \end{bmatrix}^\top \in \R{3\numTrucks{}}$ with $x^{(j)}(t) = \begin{bmatrix} e^{(j)}(t) & \dot{e}^{(j)}(t) & a^{(j)}(t) \end{bmatrix}^\top$, where $e^{(j)}(t)$ is the relative position between trucks $j-1$ and $j$ shifted by a safe distance, $\dot{e}^{(j)}(t)$ is the relative velocity between trucks $j-1$ and $j$, and $a^{(j)}(t)$ is the acceleration of the $j$th truck.
The input $u(t) \in \R{\numTrucks{}}$ concatenates the input accelerations $u^{(j)}$ of all $\numTrucks{}$ trucks, and the disturbance $w(t) \in \R{}$ is the acceleration of the leading truck.
}

We use $t=2$ and $\tau = [0,2]$ for the time-point and time-interval backward reachable sets, respectively, and $\steps = 100$ steps.
% minimal target set: relative distance should not be below safe distance or crash, relative velocity should not be above 3m/s (~10km/h), acceleration should not be above 1m/s^2 (~3.5km/h per second)
% Xend_interval = interval(repmat([-20; 3; 1],nrCars,1),repmat([0;  10; 5],nrCars,1));
% maximal target set: relative distance should be above safe distance, but not too far, relative velocity should be within [-1.5,1.5]m/s (~5km/h), acceleration should be within [-1,1]m/s^2 (~3.5km/h per second)
%Xend = interval(repmat([0; -1.5; -1],nrCars,1),repmat([20; 1.5;  1],nrCars,1));
The target set $\targetset{} \subset \R{3\numTrucks{}}$ and the input set $\inputset{} \subset \R{\numTrucks{}}$ are given by the Cartesian product over the sets for each truck.
The individual target sets are $\targetsetupper{j} = \polyHRep{\polymat{},\polyvec{}}$, with
\begingroup
\setlength{\arraycolsep}{2pt}
\setcounter{MaxMatrixCols}{20}
\begin{equation*}
    \polymat{}^\top = 
    \begin{bmatrix}
        1 & -1 & -1 & 0 & 0  & 0 & 0  \\
        0 &  0 & -2 & 1 & -1 & 0 & 0  \\
        0 &  0 & 0  & 0 & 0  & 1 & -1 
    \end{bmatrix}
\end{equation*}
and $\polyvec{}^\top = \begin{bmatrix} 0 & 20 & 7 & 10 & -3 & 5 & -1 \end{bmatrix}$ for AE sets
and $\polyvec{}^\top = \begin{bmatrix} 20 & 0 & 0 & 1.5 & 1.5 & 1 & 1 \end{bmatrix}$ for EA sets.
\endgroup
% \begin{align}
%     \targetsetupper{j} &= [-20,0]\si{\meter} \times [3,10]\si{\meter\per\second} \times [1,5]\si{\meter\per\square\second} \\
%     \targetsetupper{j} &= [0,20]\si{\meter} \times [-1.5,1.5]\si{\meter\per\second} \times [-1,1]\si{\meter\per\square\second}
% \end{align}
% for AE sets and EA sets, respectively.
% set of controllable inputs: maximum deceleration: 5m/s^2, maximum acceleration: 1m/s^2
%params.U = zonotope(interval(-5 * ones(nrCars,1),1 * ones(nrCars,1)));
% maximum acceleration and deceleration: +- 0.5 m/s^2
%W = E*zonotope(0,0.5);
We bound the input acceleration of each truck by $\inputsetupper{j} = [-5,1]\si{\meter\per\square\second}$.
The acceleration of the leading truck is $\distset{} = [-0.5,0.5]\si{\meter\per\square\second}$.

\cref{tab:platoon} lists the computation times of all four time-point and time-interval backward reachable sets for an increasing number of trucks $\numTrucks{}$.
The computation of $\outerBRSAE{-t}$ is always fastest since it is the only algorithm that scales with $\bigO{n^3}$.
Second is the other time-point solution $\innerBRSEA{-t}$ due to only one operation being $\bigO{n^{4.5}}$.
Compared to the time-point solutions, the computation of both time-interval solutions is more time-consuming, largely due to the numerous linear programs and concatenation of large zonotope generator matrices.
The evaluation of the scalable platoon benchmark demonstrates the polynomial runtime complexity in the state dimension of all our backward reachability algorithms, enabling the analysis of very high-dimensional linear systems.

\begin{table}[t]
	\centering \small
	\caption{Computation times (timeout: $100\si{\second}$) for the platoon benchmark for increasing state dimension $n$ and input dimension $m$.}
	\label{tab:platoon}
	\begin{tabular}{c c c c c c}
		\toprule
		$n$ & $m$ & $\outerBRSAE{-t}$ & $\outerBRSAE{-\tau}$ & $\innerBRSEA{-t}$ & $\innerBRSEA{-\tau}$ \\
		\midrule
		15 & 5     & $0.01\si{\second}$ & $2.2\si{\second}$  & $0.06\si{\second}$ & $0.22\si{\second}$ \\
		51 & 17    & $0.01\si{\second}$ & $7.8\si{\second}$  & $0.09\si{\second}$ & $0.26\si{\second}$ \\
		99 & 33    & $0.03\si{\second}$ & $70\si{\second}$   & $0.43\si{\second}$ & $2.2\si{\second}$ \\
		150 & 50   & $0.07\si{\second}$ & ---   			 & $0.68\si{\second}$ & $5.2\si{\second}$ \\
		300 & 100  & $0.42\si{\second}$ & ---   			 & $2.7\si{\second}$  & $22\si{\second}$ \\
		600 & 200  & $2.3\si{\second}$  & ---   			 & $13\si{\second}$   & --- \\
		999 & 333  & $11\si{\second}$   & ---			   	 & $45\si{\second}$   & --- \\
		2001 & 667 & $84\si{\second}$   & ---			     & ---				  & --- \\
		\bottomrule
	\end{tabular}
\end{table}
% note: always run with CHECK_ENABLED == false!

% old line (does not add much information...)
% 30 & 10    & $0.02\si{\second}$ & $0.35\si{\second}$  & $0.18\si{\second}$ & $0.36\si{\second}$ \\

% updates:
% - AE tp ok
% - AE ti slower...
% - EA ti only until n = 300 (otherwise empty...)
% - EA tp ok

%\rmk{other metrics? plot projections (first and last?)}

\subsection{Discussion}
\label{ssec:discussion}

Let us now address some critical aspects regarding our proposed backward reachability algorithms:
First of all, the target set $\targetset{}$ must be represented as a polytope, see \cref{def:polytope}.
% While the manual design of polytopes is quite intuitive, the target set may come from another algorithm and thus be represented by a different set representation, which needs to be converted to a polytope.
% For minimal reachability, one wants an outer approximation of the original set, whereas maximal reachability requires the converted polytope to be contained in the original set
One can easily design polytopes manually;
however, if the target set is the result of another algorithm and it is not represented as a polytope, one is forced to enclose it by a polytope (for minimal reachability) or find a polytope that is contained in the original set (for maximal reachability)---both cases can be handled via optimization.
%, e.g., using support function evaluations \eqref{eq:sFset} to obtain an enclosing polytope.

As discussed in the respective subsections, the approximation errors of all backward reachable sets, except the time-point EA backward reachable set, are non-zero even in the limit $\Delta t \to 0$.
Obtaining rigorous convergence results would require bounds in terms of the Hausdorff distance between the two sides of several set-based inequalities, including \eqref{eq:reordering}, \cref{lmm:convMinkDiff}, and \cref{prop:union}, which represent challenging problems left to future work.
Still, one can tighten the time-point and time-interval AE backward reachable sets in arbitrary directions by additional support function evaluations.
For the time-interval EA backward reachable set, the approximation error entirely depends on the tightness of the containment in \cref{prop:union}.
For large disturbances, the forward reachable set of a given initial state may not be contained within the target set at any specific point in time, but still pass through the target set over a time interval.
In this case, the initial state would be part of the time-interval solution, but not of any time-point solution.
Further investigation into this issue is required to formally capture the notion of one set passing through another, different from both containment and intersection.

Since we know that there exists a control input to steer each state of the EA backward reachable set into the target set, a natural next step is the extraction of such a controller as in \cite[Sec.~IV-B.2]{Yang2022CSL}.
The sets in our work are limited to feed-forward controllers because we consider the effects of the control input and disturbance separately.
Instead, one can also skip backward reachability and directly synthesize a controller, which is a well-researched topic for linear continuous-time systems offering a wide range of different approaches.

%Another future direction is to generalize the presented backward reachability algorithms to nonlinear continuous-time systems.
%A well-known issue in using set propagation for forward reachability of nonlinear systems is the wrapping effect, which can lead to an explosion of the set size over time.
%Still, some ideas outlined in this article may be reusable, such as \cref{fig:BRSAE_ti} depicting the intersection of various backward reachable sets in order to iteratively exclude states from the minimal backward reachable set.

\section{Conclusion}
\label{sec:conclusion}

This article presents the first backward reachability algorithms using set propagation techniques for perturbed continuous-time linear systems.
The proposed algorithms cover minimal and maximal reachability and compute both time-point and time-interval solutions.
The runtime complexity of all algorithms is polynomial in the state dimension.
Our evaluation shows tight results and how changes in the input and disturbance set affect the size of the resulting backward rechable set.
Furthermore, we examined the scalability of our algorithms by analyzing systems with well over a hundred state variables within seconds, which significantly improves the state of the art in backward reachability analysis.

% --------------- APPENDIX ---------------

%\appendices %% only if multiple appendices are required, then use \section{} as in main body

\appendix
% optional argument does not really work well...

%%% PRELIMINARIES, PROBLEM STATEMENT

% \noindent \emph{Proof of \cref{prop:convergence}}: \\
% The approximation error in $\outerZ{\S{}}(t)$ as propagated by \eqref{eq:Zprop} is given by the sum of the approximation errors induced by the additional terms $e^{At_k} \outerZ{\S{}}(\Delta t)$ \cite[Proposition~2]{Wetzlinger2023TAC}.
% Each of these additionally induced approximation errors converges to $0$ for $\Delta t \to 0$ \cite[Lemma~2]{Wetzlinger2023TAC}.
% Thus, the total approximation error in $\outerZ{\S{}}(t)$ also converges to $0$ in the limit $\Delta t \to 0$.
% The same reasoning holds also for the inner approximation $\innerZ{\S{}}(t)$ as the approximation errors in \cite[Proposition~2]{Wetzlinger2023TAC} are measured in terms of the Hausdorff distance between the outer and inner approximation \cite[Proposition~11]{Wetzlinger2023TAC}.
% \hfill $\square$

%\noindent \emph{Proof of \cref{prop:union}}: \\
%The reason is the order of quantifiers \cite[Prop.~2]{Mitchell2007}.
%\hfill $\square$

%%% MINIMAL REACHABILITY

\noindent \emph{Proof of \cref{prop:BRSAE_tp}}: \\
%This is a continuization of the discrete-time case proven in \cite[Thm.~2.4]{Kurzhanskiy2011}.
We have
\begin{align*}
	&x_0 \in e^{-At} \big( ( \targetset{} \oplus -\ZW{t} ) \ominus \ZU{t} \big) \\
	&\Leftrightarrow \forall z_u \in \ZU{t}\colon e^{At} x_0 + z_u \in \targetset{} \oplus -\ZW{t} \\
	&\Leftrightarrow \forall z_u \in \ZU{t} \; \exists z_w \in \ZW{t}\colon e^{At}x_0 + z_u + z_w \in \targetset{} \\
	&\Leftrightarrow \forall u(\cdot) \in \inputsignals{} \; \exists w(\cdot) \in \distsignals{}\colon \trajx{t;x_0,u(\cdot),w(\cdot)} \in \targetset ,
\end{align*}
which is equal to the definition in \cref{eq:def_BRSAE_tp}.
\hfill $\square$

\noindent \emph{Proof of \cref{prop:minkSum_polyzono}}: \\
We insert $\poly{} \oplus \Z{}$ into \eqref{eq:sFset} to obtain
\begin{align*}
	&\poly{} \oplus \Z{} \subseteq \polyHRep{\polymat{},\polyvectilde{}}, \\
	&\forall j \in \Nint{1}{\cons{}}\colon \polyvectilde{(j)} = \sF{\poly{} \oplus \Z{},\polymat{(j,\cdot)}^\top} \\
	&\hspace{71pt} = \sF{\poly{},\polymat{(j,\cdot)}^\top} + \sF{\Z{},\polymat{(j,\cdot)}^\top} \\
	&\hspace{71pt} = \polyvec{(j)} + \sF{\Z{},\polymat{(j,\cdot)}^\top} .
\end{align*}
The runtime complexity follows from the $\cons{}$ support function evaluations of $\Z{}$, see \eqref{eq:sF_zono} and \cref{tab:setops}.
\hfill $\square$

%\noindent \emph{Proof of \cref{prop:BRSE_ustar}}: \\
%Since the definition of $\BRSE{-\tau}$ in \eqref{eq:def_BRSE_ustar} has only existential quantifiers, we can invert their order, allowing us to split the time interval $\tau$ into a sequence of $\steps{}$ smaller intervals $\tau = \bigcup_{k \in \{0,...,\steps{}-1\}} \tau_k$, see \cref{prop:union}.
%We compute an outer approximation of the individual $\BRSE{-\tau_k}$ by enclosing the homogeneous time-interval solution $e^{-A\tau_k} \targetset{}$ using \eqref{eq:outerHti} and overestimating the influence of disturbances over $\tau_k$ using $\outerZW{t_{k+1}} \supseteq \ZW{\tau_k}$, resulting in
%%
%\begin{align*}
%	\BRSE{-\tau} &= \big\{ e^{-At} \targetset{} \oplus -\ZW{t} \, \big| \, t \in \tau \big\} \\
%	&\subseteq \big\{ e^{-A\tau_k} \targetset{} \oplus -\outerZW{t_{k+1}} \, \big| \, k \in \Nint{0}{\steps{}-1} \big\} \\
%	&\subseteq \bigcup_{k \in \{0,...,\steps{}-1\}} \convOp{e^{-At_{k+1}} \targetset{}, e^{-At_k} \targetset{}} \\
%	&\hspace{63pt} \oplus \F{} e^{-At_{k+1}} \targetset{} \oplus -\outerZW{t_{k+1}} \\
%	&= \bigcup_{k \in \{0,...,\steps{}-1\}} \outerBRSE{-\tau_k} \\
%	&= \outerBRSE{-\tau} .
%\end{align*}
%%
%Note that we convert the polytopic target set $\targetset{}$ to a constrained zonotope using \eqref{eq:conZonoOp} for an efficient evaluation of the convex hull operation.
%\hfill $\square$

\noindent \emph{Proof of \cref{thm:BRSAE_ti}}: \\
By considering only a finite subset of input trajectories $\inputsignalssub{} \subset \inputsignals{}$, we obtain an outer approximation:
\begin{align}
	\BRSAE{-\tau} \overset{\eqref{eq:BRSAE_ti}}&{=}
	\bigcap_{u^* \in \inputsignals{}} \BRSE{-\tau;u^*(\cdot)} \nonumber \\
	&\subseteq \bigcap_{u^* \in \inputsignalssub{}} \BRSE{-\tau;u^*(\cdot)} \eqqcolon \S{1}.
	\label{eq:BRSAE_ti_inputsignalssub}
\end{align}
% \begin{align}
% 	\BRSAE{-\tau} \overset{\eqref{eq:def_BRSAE_ti}}&{=}
% 	\big\{ x_0 \in \R{n} \, \big| \,
% 	\forall u(\cdot) \in \inputsignals{} \; \exists w(\cdot) \in \distsignals{} \; \exists t \in \tau\colon \nonumber \\
% 	&\hspace{64pt} x(t;x_0,u(\cdot),w(\cdot)) \in \targetset{} \big\} \nonumber \\
% 	&\subseteq \big\{ x_0 \in \R{n} \, \big| \,
% 	\forall u(\cdot) \in \inputsignalssub{} \; \exists w(\cdot) \in \distsignals{} \; \exists t \in \tau\colon \nonumber \\
% 	&\hspace{62pt} x(t;x_0,u(\cdot),w(\cdot)) \in \targetset{} \big\} \nonumber \\
% 	&= \bigcap_{u^* \in \inputsignalssub{}} \BRSE{-\tau;u^*(\cdot)} \eqqcolon \S{1}.
% 	\label{eq:BRSAE_ti_inputsignalssub}
% \end{align}
%
Let us denote the input trajectory $\forall t \in \tau\colon u(t) = \centerOp{\inputset{}}$ by $u_0$ and the other $\numInp{}$ input trajectories in $\inputsignalssub{}$ by $u_1, ..., u_{\numInp{}}$.
To evaluate $\S{1}$ in \eqref{eq:BRSAE_ti_inputsignalssub}, we compute an outer approximation of $\BRSE{-\tau;u_0}$ that also encloses $\BRSAE{-\tau}$ since
\begin{equation*}
	\BRSAE{-\tau} \overset{\eqref{eq:BRSAE_ti_inputsignalssub}}{\subseteq}
	\BRSE{-\tau;u_0} \overset{\eqref{eq:outerBRSE}}{\subseteq}
	\outerBRSE{-\tau;u_0}.
\end{equation*}
% = \bigcup_{k \in \{0,...,\steps{}-1\}} \outerBRSE{-\tau_k} 
Second, we incorporate all other input trajectories in $\inputsignalssub{}$:
\begin{align}
	\S{1} &= \bigcap_{j \in \{0,...,\numInp{}\}} \BRSE{-\tau;u_j} \nonumber \\
	\overset{\eqref{eq:outerBRSE_k}}&{\subseteq} \big( \outerBRSE{-\tau_0;u_0} \cup ... \cup \outerBRSE{-\tau_{\steps{}-1};u_0} \big) \nonumber \\
	&\qquad \cap \BRSE{-\tau;u_1} \cap ... \cap \BRSE{-\tau;u_{\numInp{}}} \nonumber \\
	\begin{split} \label{eq:union_intersection}
	\overset{\eqref{eq:outerBRSE}}&{\subseteq} \big( \outerBRSE{-\tau_0;u_0} \cup ... \cup \outerBRSE{-\tau_{\steps{}-1};u_0} \big) \\
	&\qquad \cap \outerBRSE{-\tau;u_1} \cap ... \cap \outerBRSE{-\tau;u_{\numInp{}}} \eqqcolon \S{2} .
	\end{split}
\end{align}
We enclose each additional set by the polytope constructed using support function evaluations in the directions $\dir{1}, ..., \dir{\numInp{}}$:
\begin{align}
\begin{split} \label{eq:sF_BRSE}
	&\forall j \in \Nint{1}{\numInp{}}\colon
	\outerBRSE{-\tau;u_j} \overset{\eqref{eq:sFset}}{\subseteq} \polyHRep{N,p^{(j)}} \\
	&\text{with} \; N = [\dir{1} ... \dir{\numInp{}}]^\top, \forall i \in \Nint{1}{\numInp{}}\colon p^{(j)}_{(i)} = \sF{\outerBRSE{-\tau;u_j},\dir{j}} .
\end{split}
\end{align}
We insert this in \eqref{eq:union_intersection} to obtain
\begin{align*}
	\S{2} \overset{\eqref{eq:sF_BRSE}}&{\subseteq}
	\big( \outerBRSE{-\tau_0;u_0} \cup ... \cup \outerBRSE{-\tau_{\steps{}-1};u_0} \big) \\
	&\qquad \cap \polyHRep{N,p^{(1)}} \cap ... \cap \polyHRep{N,p^{(\numInp{})}} \\
	&= \big( \outerBRSE{-\tau_0;u_0} \cup ... \cup \outerBRSE{-\tau_{\steps{}-1};u_0} \big) \cap \polyHRep{N,p} .
\end{align*}
We obtain $p = \min_{j \in \{1,...,\numInp{}\}} p^{(j)}$ element-wise by construction, see \eqref{eq:innerZtraj}.
% where $p$ is the minimum value as in \eqref{eq:innerPoly}.
Finally, distributing the intersection over the union yields $\outerBRSAE{-\tau}$ in \eqref{eq:outerBRSAE_ti}.
\hfill $\square$

\noindent \emph{Proof of \cref{prop:BRSEA_tp}}: \\
% This is a continuization of the discrete-time case proven in \cite[Thm.~2.4]{Kurzhanskiy2011}.
We have
\begin{align*}
	&x_0 \in e^{-At} \big( ( \targetset{} \ominus \ZW{t} ) \oplus -\ZU{t} \big) \\
	&\Leftrightarrow \exists z_u \in \ZU{t}\colon e^{At} x_0 + z_u \in \targetset{} \ominus \ZW{t} \\
	&\Leftrightarrow \exists z_u \in \ZU{t} \; \forall z_w \in \ZW{t}\colon e^{At}x_0 + z_u + z_w \in \targetset{} \\
	&\Leftrightarrow \exists u(\cdot) \in \inputsignals{} \; \forall w(\cdot) \in \distsignals{}\colon \trajx{t;x_0,u(\cdot),w(\cdot)} \in \targetset ,
\end{align*}
which is equal to the definition in \cref{eq:def_BRSEA_tp}.
\hfill $\square$

%% currently not needed
%\noindent \emph{Proof of \cref{lmm:linMapMinkDiff}}: \\
%Using the matrix inverse $M^{-1} \in \R{n \times n}$, we have
%\begin{align*}
%	M\S{1} \ominus M\S{2} &= \{ s \, | \, s \oplus M\S{2} \subseteq M\S{1} \} \\
%	&= \{ s \, | \, M^{-1}s \oplus \S{2} \subseteq \S{1} \} \\
%	&= \{ Ms' \, | \, s' \oplus \S{2} \subseteq \S{1} \} ,
%\end{align*}
%which is equal to $M(\S{1} \ominus \S{2}) = \{ Ms | s \oplus \S{2} \subseteq \S{1} \}$ by variable transform.
%\hfill $\blacksquare$

\noindent \emph{Proof of \cref{lmm:convMinkDiff}}: \\
We plug into the definitions of the Minkowski difference \eqref{eq:def_minkDiff} and convex hull \eqref{eq:def_conv}:
\begin{align*}
	&\convOp{\S{1} \ominus \S{3},\S{2} \ominus \S{3}} \oplus \S{3} \\
	&\; = \{ \lambda a + (1-\lambda) b + c \, | \, \lambda \in [0,1], a \oplus \S{3} \subseteq \S{1}, \\
	&\hspace{103pt} b \oplus \S{3} \subseteq \S{2}, c \in \S{3} \} \\
	&\; = \{ \lambda (a + c) + (1-\lambda) (b + c) \, | \, \lambda \in [0,1], a \oplus \S{3} \subseteq \S{1}, \\
	&\hspace{135pt} b \oplus \S{3} \subseteq \S{2}, c \in \S{3} \} \\
	&\; \subseteq \{ \lambda s_1 \oplus (1-\lambda) s_2 \, | \, \lambda \in [0,1], s_1 \in a \oplus \S{3} \subseteq \S{1}, \\
	&\hspace{95pt} s_2 \in b \oplus \S{3} \subseteq \S{2} \} \\
%	&\; \subseteq \{ \lambda \S{1} \oplus (1-\lambda) \S{2} \, | \, \lambda \in [0,1] \} \\
	&\; \subseteq \convOp{\S{1},\S{2}} .
\end{align*}
Using the identity $(\S{} \oplus \S{3}) \ominus \S{3} = \S{}$ \cite[Lemma~1(iii)]{Yang2022CSL} yields the claim.
% from which it follows that
% \begin{equation*}
% 	\convOp{\S{1} \ominus \S{3},\S{2} \ominus \S{3}} \subseteq \convOp{\S{1},\S{2}} \ominus \S{3} ,
% \end{equation*}
% since $(\S{} \oplus \S{3}) \ominus \S{3} = \S{}$ holds by \cite[Lemma~1(iii)]{Yang2022CSL}.
\hfill $\square$

\noindent \emph{Proof of \cref{thm:BRSEA_ti}}: \\
We can expand the right-hand side of \eqref{eq:BRSEA_tp_union} to
\begin{equation*}
	\big \{ \big( e^{-At} \targetset{} \ominus e^{-At} \ZW{t} \big) \oplus e^{-At} (-\ZU{t}) \, \big| \, t \in \tau_k \big \} \eqqcolon \S{1} ,
\end{equation*}
and insert
\begin{align*}
	\{ e^{-At} \targetset{} \, | \, t \in \tau_k \}
		\overset{\eqref{eq:Hti}}&{=} e^{-At_{k+1}} \Hti{\tau_0} \\
	\{ e^{-At} \ZW{t} \, | \, t \in \tau_k \}
		\overset{\eqref{eq:outerZ},\eqref{eq:Zprop}}&{\subseteq} e^{-At_{k+1}} \outerZW{\tau_k} \\
	\{ e^{-At} (-\ZU{t}) \, | \, t \in \tau_k \}
		\overset{\eqref{eq:innerZ},\eqref{eq:Zprop}}&{\supseteq} e^{-At_{k+1}} (-\innerZU{t_k})
\end{align*}
to obtain
\begin{equation*}
	\S{1} \supseteq e^{-At_{k+1}} \big( ( \Hti{\tau_0} \ominus \ZW{\tau_k} ) \oplus -\ZU{\tau_k} \big) \eqqcolon \S{2} .
\end{equation*}
%
%
% For the particular solutions over $\tau_k = [t_k,t_{k+1}]$, we have
% %
% \begin{equation*}
% 	\innerZU{t_k} \subseteq \ZU{\tau_k} , \; \ZW{\tau_k} \subseteq \outerZW{\tau_k} ,
% \end{equation*}
% %
% which are computed using \eqref{eq:innerZ}-\eqref{eq:Znonzero}.
% Consequently, we obtain
% \begin{equation*}
% 	\S{1} \supseteq  e^{-At_{k+1}} \big( ( \Hti{\tau_0} \ominus \outerZW{\tau_k} ) \oplus -\innerZU{t_k} \big) \eqqcolon \S{2} .
% \end{equation*}
%
Next, we replace $\Hti{\tau_0}$ by its inner approximation, see \eqref{eq:innerHti}:
\begin{align*}
	&\S{2} \supseteq e^{-At_{k+1}} \big( \big( ((\convOp{\targetset{},e^{A\Delta t}\targetset{}} \ominus \F{} \, \boxOp{\targetset{}}) \\
	&\hspace{73pt} \ominus \B{\mu}) \ominus \outerZW{\tau_k} \big) \oplus -\innerZU{t_k} \big) \eqqcolon \S{3} .
\end{align*}
Note that we enclose $\targetset{}$ by $\boxOp{\targetset{}}$ to evaluate the multiplication with the interval matrix $\F{}$ using \eqref{eq:intmatzono} and compute $\mu$ as in \eqref{eq:mu} using the generator matrix of $\boxOp{\targetset{}}$.
We now apply \cref{lmm:convMinkDiff} and convert the two polytopes of the convex hull operation to constrained zonotopes by \cref{alg:conZonoOp} to efficiently evaluate the Minkowski sum with $-\innerZU{t_k}$:
\begin{align*}
	&\S{3} \supseteq e^{-A t_{k+1}} \big( -\innerZU{t_k} \, \oplus \\
	&\hspace{25pt} \operator{conv} \big( ((\conZonoOp{ \targetset{} \ominus \F{} \, \boxOp{\targetset{}}) \ominus \B{\mu}) \ominus \outerZW{\tau_k} } , \\
	&\hspace{25pt} \conZonoOp{ ((e^{A \Delta t} \targetset{} \ominus \F{} \, \boxOp{\targetset{}}) \ominus  \B{\mu}) \ominus \outerZW{\tau_k} } \big) \big) \\
	&\hspace{10pt} \eqqcolon \innerBRSEA{-\tau_k} .
\end{align*}
Thus, each set $\innerBRSEA{-\tau_k}$ is an inner approximation of the union of time-point solutions over $\tau_k$, which in turn is an inner approximation of the time-interval solution $\BRSEA{-\tau_k}$:
\begin{equation*}
	\innerBRSEA{-\tau_k} \subseteq \bigcup_{t \in \tau_k} \BRSEA{-t} \overset{\text{\cref{prop:union}}}{\subseteq} \BRSEA{-\tau_k} .
\end{equation*}
Extending this reasoning to all $\steps{}$ consecutive time intervals yields the claim.
\hfill $\square$

%\begin{figure*}
%	\centering
%	\begin{tikzpicture}[scale = 1, every node/.style={font=\small}]
%		\node[anchor=west] at (0,0) {
%			\setcounter{MaxMatrixCols}{12}
%			$A =
%			\begin{bmatrix}
%				0 & 0 & 0 & 1 & 0 & 0 & 0 & 0 & 0 & 0 & 0 & 0 \\
%				0 & 0 & 0 & 0 & 1 & 0 & 0 & 0 & 0 & 0 & 0 & 0 \\
%				0 & 0 & 0 & 0 & 0 & 1 & 0 & 0 & 0 & 0 & 0 & 0 \\
%				0 & 0 & 0 & 0 & 0 & 0 & 0 & -g & 0 & 0 & 0 & 0 \\
%				0 & 0 & 0 & 0 & 0 & 0 & g & 0 & 0 & 0 & 0 & 0 \\
%				0 & 0 & 0 & 0 & 0 & 0 & 0 & 0 & 0 & 0 & 0 & 0 \\
%				0 & 0 & 0 & 0 & 0 & 0 & 0 & 0 & 0 & 1 & 0 & 0 \\
%				0 & 0 & 0 & 0 & 0 & 0 & 0 & 0 & 0 & 0 & 1 & 0 \\
%				0 & 0 & 0 & 0 & 0 & 0 & 0 & 0 & 0 & 0 & 0 & 1 \\
%				0 & 0 & 0 & 0 & 0 & 0 & 0 & 0 & 0 & 0 & 0 & 0 \\
%				0 & 0 & 0 & 0 & 0 & 0 & 0 & 0 & 0 & 0 & 0 & 0 \\
%				0 & 0 & 0 & 0 & 0 & 0 & 0 & 0 & 0 & 0 & 0 & 0
%			\end{bmatrix}, \;
%			B =
%			\begin{bmatrix}
%				0 & 0 & 0 & 0 \\
%				0 & 0 & 0 & 0 \\
%				0 & 0 & 0 & 0 \\
%				0 & 0 & 0 & 0 \\
%				0 & 0 & 0 & 0 \\
%				1 & 0 & 0 & 0 \\
%				0 & 0 & 0 & 0 \\
%				0 & 0 & 0 & 0 \\
%				0 & 0 & 0 & 0 \\
%				0 & 1 & 0 & 0 \\
%				0 & 0 & 1 & 0 \\
%				0 & 0 & 0 & 1 
%			\end{bmatrix}, \;
%			E =
%			\begin{bmatrix}
%				0 & 0 & 0 \\
%				0 & 0 & 0 \\
%				0 & 0 & 0 \\
%				1 & 0 & 0 \\
%				0 & 1 & 0 \\
%				0 & 0 & 1 \\
%				0 & 0 & 0 \\
%				0 & 0 & 0 \\
%				0 & 0 & 0 \\
%				0 & 0 & 0 \\
%				0 & 0 & 0 \\
%				0 & 0 & 0 
%			\end{bmatrix}
%			$
%		};
%	\end{tikzpicture}
%	\caption{Dynamics equations with $g = 9.81$ for the form \eqref{eq:linsys} for the quadrotor system analyzed in \cref{ssec:terminalset}.}
%	\label{fig:quadrotor12D}
%\end{figure*}

\begin{figure*}
	\centering
	\begin{tikzpicture}[scale = 1, every node/.style={font=\scriptsize}]
		\node[anchor=west] at (0,0) {
			\setcounter{MaxMatrixCols}{12}
			\begingroup\setlength\arraycolsep{2pt}
			$G =
			\begin{bmatrix*}[S]
				-0.0042 & 0.0455 & 0.0064 & -0.0694 & 0 & 0 & 0.0001 & -0.0004 & 0 & 0 & -0.0002 & -0.0004 \;\; \\
				0.0455 & 0.0042 & 0.0694 & 0.0064 & 0 & 0 & 0.0004 & 0.0001 & 0 & 0 & -0.0004 & 0.0002 \\
				0 & 0 & 0 & 0 & -0.0370 & 0.0377 & 0 & 0 & 0 & 0 & 0 & 0 \\
				0.0086 & -0.0924 & 0.0031 & -0.0331 & 0 & 0 & 0.0008 & -0.0022 & 0 & 0 & -0.0003 & -0.0006 \\
				-0.0924 & -0.0086 & 0.0331 & 0.0031 & 0 & 0 & 0.0022 & 0.0008 & 0 & 0 & -0.0006 & 0.0003 \\
				0 & 0 & 0 & 0 & 0.0491 & 0.0284 & 0 & 0 & 0 & 0 & 0 & 0 \\
				-0.0044 & -0.0004 & 0.0083 & 0.0008 & 0 & 0 & 0.0088 & 0.0032 & 0 & 0 & 0.0046 & -0.0023 \\
				0.0004 & -0.0044 & 0.0008 & -0.0083 & 0 & 0 & 0.0032 & -0.0088 & 0 & 0 & 0.0023 & 0.0046 \\
				0 & 0 & 0 & 0 & 0 & 0 & 0 & 0 & 0.0045 & -0.0005 & 0 & 0 \\
				-0.0091 & -0.0008 & 0.0071 & 0.0007 & 0 & 0 & -0.0244 & -0.0088 & 0 & 0 & 0.0016 & -0.0008 \\
				0.0008 & -0.0091 & 0.0007 & -0.0071 & 0 & 0 & -0.0088 & 0.0244 & 0 & 0 & 0.0008 & 0.0016 \\
				0 & 0 & 0 & 0 & 0 & 0 & 0 & 0 & -0.0019 & -0.0011 & 0 & 0
			\end{bmatrix*}$
			\endgroup
		};
	\end{tikzpicture}
	\caption{Generator matrix $G$ of the safe terminal set $\zono{\matzeros{},G}$ for the quadrotor system in \cref{ssec:terminalset} computed using the approach in \cite{Gruber2021CSL}.}
	\label{fig:safeterminalset}
\end{figure*}

\section*{Acknowledgment}
Many thanks to our colleagues Adrian Kulmburg, Tobias Ladner, Lukas Schäfer, and Victor Ga{\ss}mann for their help in the formalization of some proofs, the design and evaluation of the numerical examples, and the discussion of the algorithms.
% cut ---
%\rmk{In most cases, sponsor and financial support acknowledgments are placed in the unnumbered footnote on the first page, not here.}

% --------------- REFERENCES ---------------

\AtNextBibliography{\small}
\printbibliography
%\bibliographystyle{IEEEtran}
%\bibliography{allrefs}

% --------------- BIOGRAPHIES ---------------

\begin{IEEEbiography}[{\includegraphics[width=1in,height=1.25in,clip,keepaspectratio]{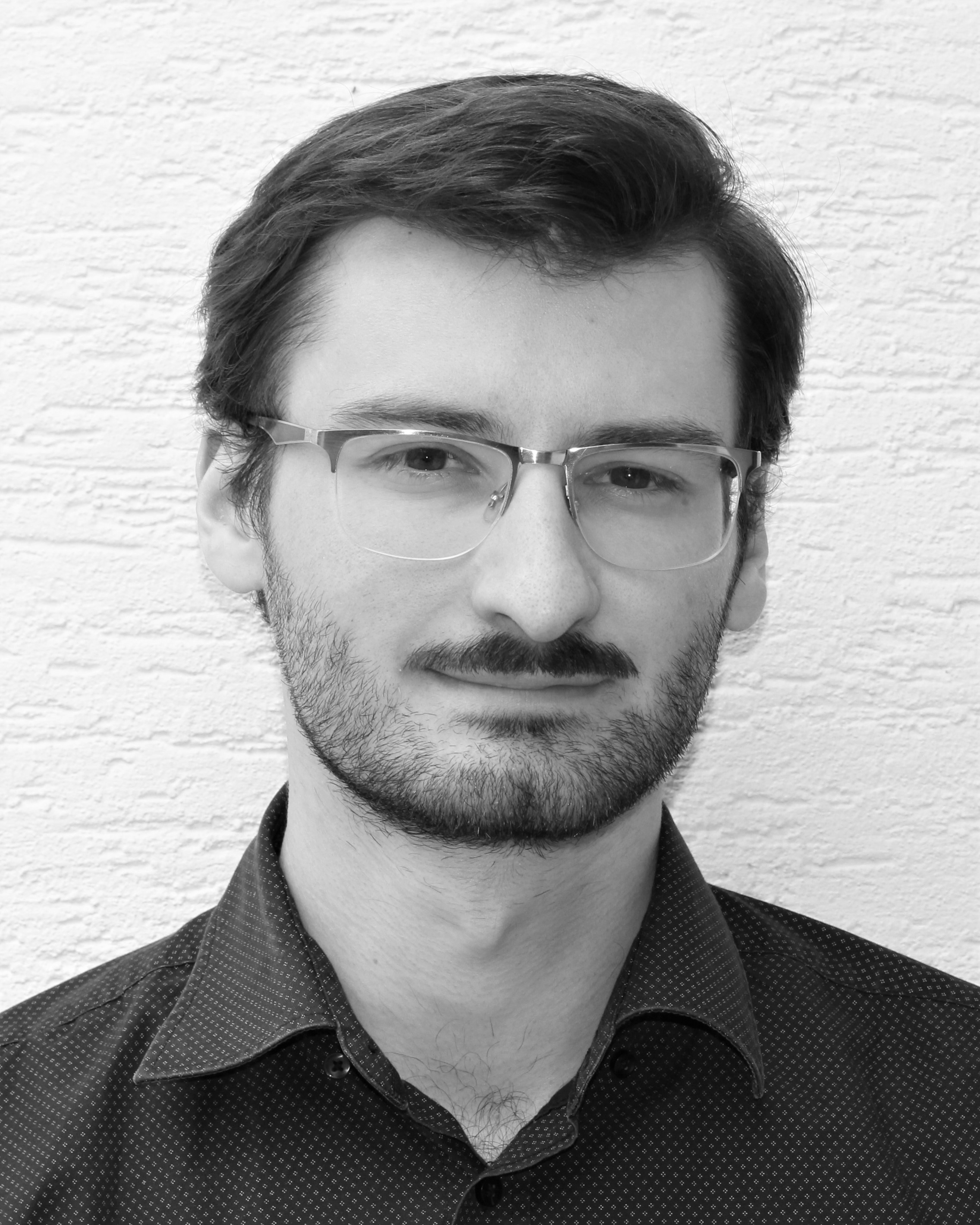}}]{MARK WETZLINGER}{\space} received the B.S. degree in Engineering Sciences in 2017 jointly from Universit\"{a}t Salzburg, Austria and Technische Universit\"{a}t M\"{u}nchen, Germany, and the M.S. degree in Robotics, Cognition and Intelligence in 2019 from Technische Universit\"{a}t M\"{u}nchen, Germany, and his Ph.D. degree in computer science in 2024 at Technische Universit\"{a}t M\"{u}nchen, Germany. His research interests include formal verification of linear and nonlinear continuous systems, reachability analysis, adaptive parameter tuning, and model order reduction.
\end{IEEEbiography}

\begin{IEEEbiography}[{\includegraphics[width=1in,height=1.25in,clip,keepaspectratio]{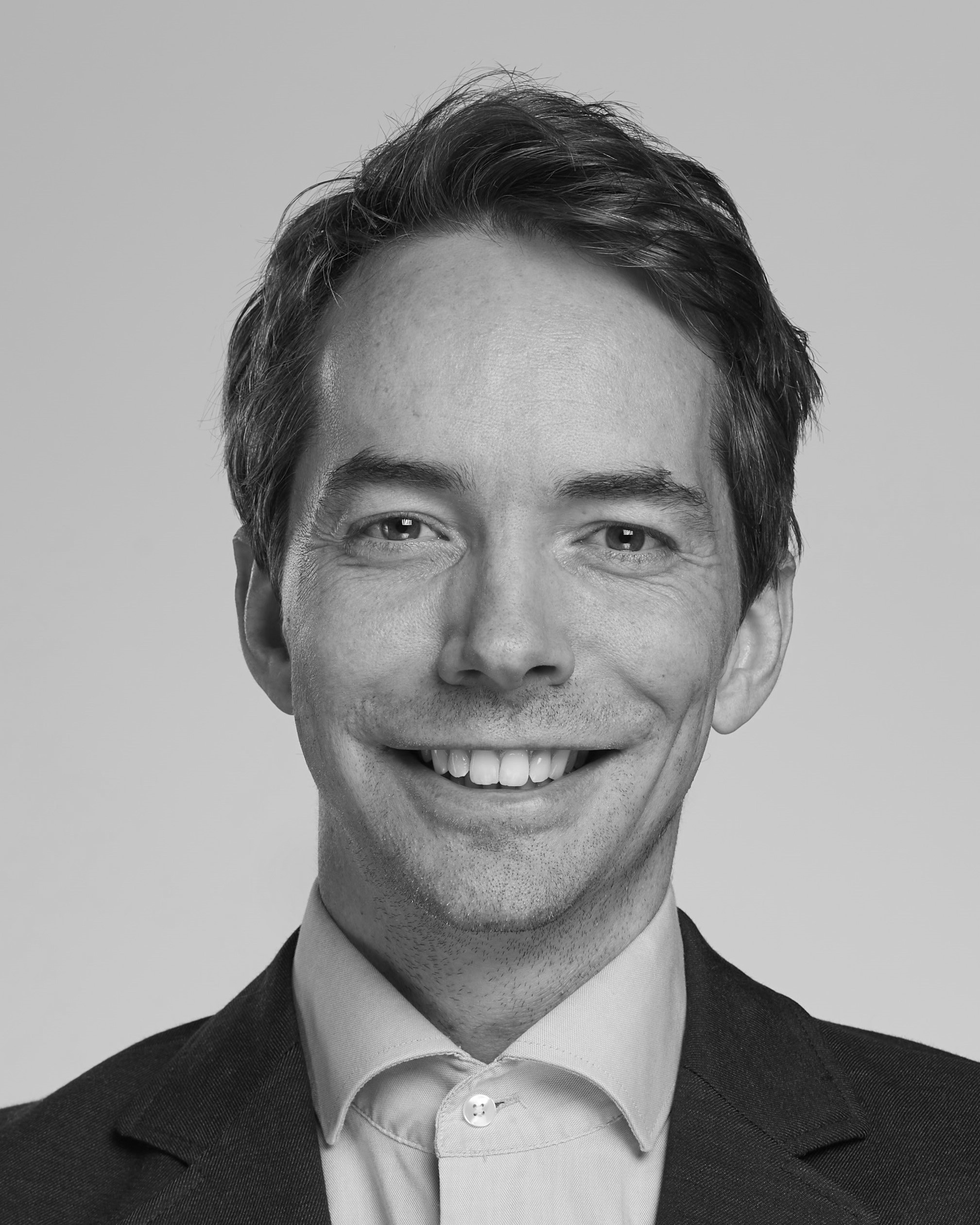}}]{MATTHIAS ALTHOFF}{\space} is an associate professor in computer science at Technische Universit\"{a}t M\"{u}nchen, Germany. He received his diploma engineering degree in Mechanical Engineering in 2005, and his Ph.D. degree in Electrical Engineering in 2010, both from Technische Universit\"{a}t M\"{u}nchen, Germany. From 2010 to 2012 he was a postdoctoral researcher at Carnegie Mellon University, Pittsburgh, USA, and from 2012 to 2013 an assistant professor at Technische Universit\"{a}t Ilmenau, Germany. His research interests include formal verification of continuous and hybrid systems, reachability analysis, planning algorithms, nonlinear control, automated vehicles, and power systems.
\end{IEEEbiography}

\end{document}